\numberwithin{equation}{section}
\newtheorem{definition}{Definition}[section]
\newtheorem{theorem}{Theorem}[section]
\newtheorem{lemma}{Lemma}[section]
\newtheorem{remark}{Remark}[section]
\newtheorem{proposition}{Proposition}[section] 
\newtheorem{corollary}{Corollary}[section]
\title{Global spherically symmetric classical solutions for arbitrary large initial data of the multi-dimensional non-isentropic compressible Navier-Stokes equations}
\author{Yongteng G{\small U}$^{a}$\thanks{Email addresses: guyongteng24@mails.ucas.ac.cn (Y. T. Gu), xdhuang@amss.ac.cn (X. D. Huang). }, Xiangdi H{\small UANG}$^{a}$  \\ 
	{\normalsize a. State Key Laboratory of Mathematical Sciences, Academy of Mathematics and Systems Sciences,}\\
	{\normalsize Chinese Academy of Sciences, Beijing 100190, China;}\\
}
\date{}
\begin{document}
	
	\maketitle
	
	\begin{abstract}
		In 1871, Saint-Venant introduced the shallow water equations. Since then, the global classical solutions for arbitrary large initial data of the multi-dimensional viscous Saint-Venant system have remained a well-known open problem. It was only recently that [Huang-Meng-Zhang, http:arXiv:2512.15029, 2025], under the assumption of radial symmetry, first proved the existence of global classical solutions for arbitrary large initial data to the initial-boundary value problem of the two-dimensional viscous shallow water equations. At the same time, [Chen-Zhang-Zhu, http:arXiv:2512.18545, 2025] also independently proved the existence of global large solutions to the Cauchy problem of this system. Notably, in the work of Huang-Meng-Zhang, they also established the existence of global  classical solutions for arbitrary large initial data to the isentropic compressible Navier-Stokes equations satisfying the BD entropy equality in both two and three dimensions, and the viscous shallow water equations are precisely a specific class of isentropic compressible fluids subject to the BD entropy equality. In this paper, we prove a new BD entropy inequality for a class of non-isentropic compressible fluids, which can be regarded as a generalization of the shallow water equations with transported entropy. Employing new estimates on the lower bound of density different from that of Huang-Meng-Zhang's work, we show the "viscous shallow water system with transport entropy" will admit global classical solutions for arbitrary large initial data to the spherically symmetric initial-boundary value problem in both two and three dimensions. Our results also relax the restrictions on the dimension and adiabatic index imposed in Huang-Meng-Zhang's work on the shallow water equations, extending the range from $N=2,\ \gamma \ge \frac{3}{2}$ to $N=2,\ \gamma > 1$ and $N=3,\  1<\gamma<3$.
		\\
		\\
		\textbf{Keywords:} Shallow water equations; Entropy transport; Global spherically symmetric classical solutions; BD entropy inequality
		\\
		\\
		\textbf{Mathematics Subject Classifications (2010)}: 76N10, 35B45.
	\end{abstract}
	\section{Introduction}
	In this paper, we consider the non-isentropic compressible Navier-Stokes equations, especially for viscous shallow water model. This model can be regarded as an extension of the viscous shallow water model to the case of non-constant entropy. It describes the motion of a compressible fluid with variable entropy, and can be viewed as the full compressible Navier-Stokes-Fourier neglecting thermal conduction and viscous dissipation. This generalized shallow water model can be written as
	\begin{equation}\label{1.1}
		\begin{cases}
			\rho _t+\nabla \cdot \left( \rho \mathbf{u} \right) =0,\\
			\rho \dot{\mathbf{u}}+\nabla P=2\nabla \cdot \left( \rho \mathcal{D} \mathbf{u} \right) ,\\
			s_t+u\cdot \nabla s=0,\\
			P=\rho ^{\gamma}e^s.\\
		\end{cases}
	\end{equation}
	Here $\rho(x,t)$, $\mathbf{u}(x,t)$, $s(x,t)$ and $P(\rho,s)$($\gamma>1$) are the fluid density, velocity, entropy and pressure, respectively, and $\mathcal{D} \mathbf{u} = \frac{\nabla \mathbf{u} + \nabla \mathbf{u}^T}{2}$.
	\par
	When $s=\text{const}$, this model is a special case of the compressible Navier-Stokes equations with density-dependent viscosity coefficients
	\begin{equation}
		\begin{cases}
			\partial_t \rho + \nabla \cdot (\rho \mathbf{u}) = 0, \\[4pt]
			\partial_t (\rho \mathbf{u}) + \nabla \cdot (\rho \mathbf{u} \otimes \mathbf{u}) + \nabla P(\rho) - \nabla \cdot \big( \mu(\rho) \, \mathcal{D}(\mathbf{u}) \big) - \nabla \big( \lambda(\rho) \, \nabla \cdot \mathbf{u} \big) = 0,
		\end{cases}
	\end{equation}
	with the viscosity coefficients $\mu(\rho)$ and $\lambda(\rho)$ satisfy 
	$$\mu(\rho) \ge 0, \quad \mu(\rho) + N\lambda(\rho) \ge 0.$$
	Among compressible Navier–Stokes equations with variable viscosity coefficients, a model that has attracted considerable attention is the case $$\mu = \text{const} > 0,\quad\lambda(\rho) = \rho^\beta.$$
	An early study of this case was carried out by Kazhikhov-Vaigant\cite{Kazhikhov-Vaigant}, who proved the global well‑posedness of strong solutions to the two‑dimensional periodic problem for arbitrarily large initial data away from vacuum, provided the exponent satisfies $\beta>3$. Later, by employing commutator estimates, Huang-Li\cite{Huang-Li-1} weakened the requirement on the viscosity coefficient to $\beta>\frac{4}{3}$ and allowed the initial data to contain vacuum. For the Cauchy problem, Huang-Li\cite{Huang-Li-2} established the global existence and uniqueness of both strong and classical solutions under the condition $\beta > \frac{4}{3}$, provided the initial density $\rho_0$ decays at a suitable rate (a related result was also obtained by Jiu-Wang-Xin\cite{Jiu-Wang-Xin}). For the initial-boundary value problem with Navier‑slip conditions, Fan-Li-Li\cite{Fan-Li-Li} employed the Riemann mapping theorem and the pull‑back Green’s function method to overcome the boundary‑induced difficulties and established the well‑posedness of the system. For the radially symmetric case, Huang-Su-Yan-Yu\cite{Huang-Su-Yan-Yu} derived an explicit representation for the effective viscous flux on the boundary and proved the global existence of strong solutions under the condition $\beta>1$. 
	\par 
	Another important variable-viscosity model is the B-D entropy model, in which the coefficients satisfy 
	$$\lambda(\rho) = \rho \mu'(\rho) - \mu(\rho).$$
	This mathematical entropy estimate was derived by Bresch-Desjardins-Lin\cite{Bresch-Desjardins-Lin} and Bresch-Desjardins\cite{Bresch-Desjardins}. Subsequently, by establishing a new a priori bound for smooth approximate solutions, Mellet-Vasseur\cite{Mellet-Vasseur} investigated the stability properties. In the context of multi-dimensional flows, a breakthrough was achieved independently by Li-Xin\cite{Li-Xin}  and Vasseur-Yu\cite{Vasseur-Yu}; they both successfully constructed global weak solutions for the specific case $\mu(\rho) = \rho$ and $\lambda(\rho) = 0$, allowing for vacuum and arbitrarily large initial data. More precisely, they proved global weak solutions for two cases
	\begin{itemize}
		\item $N=2$, $\gamma>1.$ 
		\item $N=3$, $1<\gamma<3.$
	\end{itemize}
	Later, Bresch-Vasseur-Yu\cite{Bresch-Vasseur-Yu} extended the result of Li-Xin\cite{Li-Xin} to the three-dimensional case with a physical symmetric viscous stress tensor and more general viscosity coefficients. For spherically symmetric flows, the global existence of non-vacuum strong solutions with large data was first derived by Zhang\cite{Zhang}, who considered the specific BD entropy relation
	$$\mu(\rho) = \rho^\alpha, \quad \lambda(\rho) = (\alpha - 1)\rho^\alpha.$$
	This model was further studied by Guo-Xu-Zhang\cite{Guo-XU-Zhang} , who constructed global strong solutions for the Cauchy problem. A particularly important case satisfying the BD relation is given by $\mu(\rho) = \rho$ and $\lambda(\rho) = 0$, which corresponds to the viscous Saint-Venant model for shallow water motion. This system has yielded significant results regarding strong solutions. For non-vacuum initial data, Haspot\cite{Haspot} and Burtea-Haspot \cite{Burtea-Haspot} established the global well-posedness of strong solutions for the one-dimensional Cauchy problem. Subsequently, Cao-Li-Zhu\cite{Cao-Li-Zhu}  derived the existence of global regular solutions for the one-dimensional case with large data and far-field vacuum. In a later work \cite{Cao-Li-Zhu-2}, they further investigated the initial-boundary value problem in the exterior of a ball in $\mathbb{R}^d$ ($d=2,3$), proving the global existence of unique spherically symmetric classical solutions for large initial data with far-field vacuum.
	However, the existence of global classical solutions for the multi-dimensional viscous Saint-Venant system with arbitrarily large initial data remained a challenging open problem. A breakthrough was recently achieved by Huang-Meng-Zhang \cite{Huang-Meng-Zhang}, who, under the assumption of radial symmetry, provided the first proof of global classical solutions for the two-dimensional initial-boundary value problem. At the same time,  Chen-Zhang-Zhu\cite{Chen-Zhang-Zhu} independently established the existence of global large solutions for the corresponding Cauchy problem. Significantly, Huang-Meng-Zhang \cite{Huang-Meng-Zhang} also derived global classical solutions for the broader class of isentropic compressible Navier-Stokes equations satisfying the BD entropy relation in both 2D and 3D—of which the shallow water equations are a specific case.
	
	Building upon the preceding analysis, this paper investigates the endpoint case of the BD entropy model and generalizes it to the non-isentropic system. For simplicity, we assume that $$\mu(\rho)=2\rho, \quad \lambda(\rho)=0.$$
	We consider the initial-boundary value problem \eqref{1.1} in a bounded domain $\Omega \subset \mathbb{R}^N$ ($N=2,3$), with the initial conditions
	\begin{equation} \label{1.2}
		(\rho, \rho \mathbf{u}, s)|_{t=0} = (\rho_0, \mathbf{m_0},s_0), \quad x \in \Omega,
	\end{equation}
	and the boundary condition
	\begin{equation} \label{1.3}
		\rho \mathbf{u}|_{\partial\Omega} = 0.
	\end{equation}
	
	For the radially symmetric setting, we assume $\Omega = B_{R}$ and focus on solutions of the form
	\begin{equation}\label{1.4}
		\rho(x,t) = \rho(r,t), \quad \mathbf{u}(x,t) = u(r,t)\frac{\mathbf{x}}{r}, \quad s(x,t)=s(r,t).
	\end{equation}
	In $N$-dimensional spherically symmetric coordinates, the system \eqref{1.1} takes the form
	\begin{equation}\label{3.3}
		\begin{cases}
			\rho_{t}+(\rho u)_{r}+\frac{N-1}{r}\rho u=0, \\
			\displaystyle \rho (u_t + u u_r) + P_r = 2 \partial_r(\rho u_r) + 2(N-1)\frac{\rho}{r}\left(u_r - \frac{u}{r}\right), \\
			\displaystyle s_t + u s_r = 0,
		\end{cases}
	\end{equation}
	with the initial data
	\begin{equation*}
		\rho(r, 0) = \rho_0(r), \quad \rho u(r, 0) = m_0(r), \quad s(r,0)=s_0(r),
	\end{equation*}
	and the boundary condition
	\begin{equation*}
		\rho u(0,t) = \rho u(R,t) = 0.
	\end{equation*}
	We begin by defining the global classical solution before formulating our theorem.
	\begin{definition}[Global classical solution]We say that a radially symmetric triple $(\rho, \mathbf{u}, s)$ with $\rho > 0$ is a global classical solution to the initial-boundary-value problem \eqref{1.1},\eqref{1.2} and \eqref{1.3} provided that, for any $ 0<\tau<T$,
		$$\begin{cases}
			\rho, s \in C([0,T];H^3(\Omega)), \quad \rho_t, s_t \in C([0,T];H^2(\Omega)),\\
			\rho_{tt}, s_{tt} \in L^{\infty}(0,T;L^2(\Omega)) \cap L^2(0,T;H^1(\Omega)),\\
			\mathbf{u} \in C([0,T];H_{0}^{1}(\Omega) \cap H^3(\Omega)) \cap L^2(0,T;H^4(\Omega)) \cap L^{\infty}(\tau,T;H^4(\Omega)),\\
			\mathbf{u}_t \in L^{\infty}(0,T;H_{0}^{1}(\Omega)) \cap L^2(0,T;H^2(\Omega)) \cap L^{\infty}(\tau,T;H^2(\Omega)),\\
			\mathbf{u}_{tt} \in L^2(0,T;L^2(\Omega)) \cap L^{\infty}(\tau,T;L^2(\Omega)) \cap L^2(\tau,T;H^1(\Omega)).
		\end{cases}$$
	\end{definition}
	Our main results are the following two theorems on the global existence of classical solutions with large initial data away from vacuum for the system with transported entropy.
	\begin{theorem}[Global classical solutions for $N=2$]\label{theorem 2.1}
		Assume that $$N=2, \quad \gamma>1,$$ 
		and the radially symmetric initial data $(\rho_0, \mathbf{u_0},s_0)$ satisfies 
		\begin{align*}
			& 0 < \underline{\rho} \le \rho_0 \le \bar{\rho}, \quad 0 < \underline{s} \le s_0 \le \bar{s}, \quad \frac{\partial_r s_0}{\rho_0 r} \in L^{\infty}(\Omega), \\
			& (\rho_0, \mathbf{u}_0, s_0) \in H^3(\Omega), \quad \mathbf{u}_0|_{\partial \Omega} = 0.
		\end{align*}
		Then, there exists a unique global radially symmetric classical solution to the initial-boundary value problem \eqref{1.1},\eqref{1.2} and \eqref{1.3} satisfying, for any $(x,t) \in \Omega \times[0,T]$, 
		\begin{equation}
			(C(T))^{-1} \le \rho(x,t) \le C(T), \quad 0 < \underline{s} \le s(x,t) \le \bar{s}.
		\end{equation}
		where the constant $C(T) > 0$ depends on the initial data and $T$.
	\end{theorem}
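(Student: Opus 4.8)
The plan is to establish the global existence via a standard continuation argument: local existence of the classical solution (which follows from the regularity assumed on the initial data by a fixed-point / linearization scheme) combined with global-in-time a priori estimates. The backbone of the estimates is the BD entropy inequality asserted in the abstract for this non-isentropic system; together with the basic energy estimate it controls $\int \rho|\mathbf{u}|^2$, $\int \rho|\nabla \log\rho|^2$ (or the corresponding one-dimensional weighted quantities in the radial variable $r$), and the pressure-type terms. Since the entropy $s$ is transported along particle paths by $s_t + u s_r = 0$, the maximum principle immediately gives $0 < \underline{s} \le s(x,t) \le \bar{s}$ for all time, so $e^s$ stays comparable to a constant and the pressure behaves like $\rho^\gamma$ up to bounded factors; this is what lets the isentropic BD machinery go through. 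I would also propagate the weighted quantity $\partial_r s_0/(\rho_0 r)$ using the transport equation and the continuity equation to control derivatives of $s$ that appear in the momentum equation.

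The core of the work is the pointwise two-sided bound $(C(T))^{-1}\le\rho\le C(T)$. In the spherically symmetric setting I would introduce Lagrangian mass coordinates and the "effective viscous flux" $F = 2\rho u_r + 2(N-1)\rho u/r - (P-P_{\text{ref}})$ type quantity; using the momentum equation one derives that $F$ (or $\rho^{-1}$ paired with a suitable potential) satisfies a transport-type ODE along particle trajectories. The upper bound on $\rho$ then follows from a Gronwall argument once one has $L^1_t L^\infty_x$ control of the relevant combination, which in turn comes from the BD entropy estimate plus Sobolev embedding in the radial variable (where the one-dimensional structure is a genuine help). For the lower bound, I would follow the new approach advertised in the abstract — different from Huang-Meng-Zhang — presumably getting a differential inequality for $\inf\rho$ or for $\log\rho$ along trajectories that does not degenerate; the transported-entropy structure and the weighted bound on $\partial_r s_0/(\rho_0 r)$ should be exactly what is needed to close this without the dimension/$\gamma$ restriction $\gamma\ge 3/2$, thereby covering $N=2,\ \gamma>1$ and $N=3,\ 1<\gamma<3$.

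Once density is bounded above and below, the system becomes uniformly parabolic in $\mathbf{u}$ and the remaining higher-order estimates are more routine: I would differentiate the momentum equation in $t$, test with $\mathbf{u}_t$ to get $L^\infty_t L^2_x$ bounds on $\sqrt\rho\,\mathbf{u}_t$ and $L^2_t H^1_x$ bounds, then use elliptic regularity for the stationary Lamé-type operator with density weights to bootstrap $\mathbf{u}\in H^3$, and correspondingly $\rho, s\in H^3$ via the transport equations (this is where the $H^3$ assumption on the data is consumed). The time-weighted estimates near $t=0$ in the definition of classical solution are obtained by the usual multiplication by powers of $t$ to absorb the loss of one derivative at the initial time. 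I expect the main obstacle to be the density lower bound in three dimensions: the weight $1/r$ in the extra viscous term and the behaviour near the origin $r=0$ make the trajectory ODE for $\inf\rho$ delicate, and controlling the contribution of $\nabla s$ (equivalently the transported $\partial_r s/(\rho r)$ weight) uniformly in time is the crux that forces the precise function-space setup.
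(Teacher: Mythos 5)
Your high-level scaffolding is correct and matches the paper: local existence plus global a priori estimates plus continuation; the entropy maximum principle giving $\underline{s}\le s\le\bar s$; the invariance of $\partial_y s$ (Lagrangian form of $\partial_r s/(\rho r^{N-1})$) along the flow so that the entropy contribution to $\nabla P$ stays bounded; a BD entropy inequality coupled to the basic energy identity; and, once the density is bounded above and below, an elliptic-regularity bootstrap with the transport equations to get $H^3$ regularity. All of that is how the paper proceeds.

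However, the core of the theorem --- the two-sided density bound --- is exactly where your proposal parts ways with the paper and where the sketch would not close. You propose to work with the Hoff/Kazhikhov-type \emph{effective viscous flux} $F\sim 2\rho u_r+2(N-1)\rho u/r - P$ and a transport-type ODE along particle trajectories, with the upper bound following from a Gronwall argument once one has $L^1_tL^\infty_x$ control of some viscous combination. The paper does not use $F$ at all, and the BD entropy estimate does not yield the $L^1_tL^\infty_x$ norm you would need. Instead the paper exploits the BD \emph{effective velocity} $w=u+2r^{N-1}\partial_y\rho$ (Eulerian: $u+2\nabla\log\rho$), whose Lagrangian evolution is the strikingly simple equation $\partial_\tau w + r^{N-1}\partial_y P = 0$, with no viscous term at all. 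The upper bound then comes from $L^4$ estimates on $u$ and $w$, which control $\|\nabla\rho^{1/4}\|_{L^4}$, followed by 2D Sobolev embedding (and in 3D by a more delicate $L^{2n}$ bootstrap against $R_T=\sup\|\rho\|_{L^\infty}$). The lower bound is also entirely different from "a differential inequality for $\inf\rho$ along trajectories'': the paper's new idea, and the reason it can go beyond $\gamma\ge 3/2$ and handle $N=3$, is to derive \emph{coupled} $L^{2n}$ estimates for $u$ and $w$ in which each equation sources the other with $\le C(T)(1+\|u\|_{L^{2n}}^{2n}+\|w\|_{L^{2n}}^{2n})$, close them by Gronwall, and send $n\to\infty$ to get $\|u\|_{L^\infty}+\|w\|_{L^\infty}\le C(T)$; this bounds $\|r^{N-1}\partial_y\rho\|_{L^\infty}=\tfrac12\|w-u\|_{L^\infty}$, and then a Moser-type one-dimensional Sobolev estimate on $v^\beta=\rho^{-\beta}$ for small $\beta$ yields $\|\rho^{-1}\|_{L^\infty}\le C(T)$. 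Your "trajectory ODE for $\inf\rho$'' is never derived and, given the degeneracy at $r=0$ and the absence of the $L^1_tL^\infty_x$ ingredient you invoke, it is not clear it could be; this is a genuine gap at the heart of the theorem.

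A secondary point: your plan "propagate the weighted quantity $\partial_r s_0/(\rho_0 r)$ using the transport equation and the continuity equation'' is correct but the mechanism is cleaner than you suggest --- in Lagrangian mass coordinates one has $s_{y\tau}=0$ identically, so $\partial_y s(y,\tau)=\partial_y s_0(y)$ with no work to do, and the hypothesis $\partial_r s_0/(\rho_0 r^{N-1})\in L^\infty$ is precisely $\partial_y s_0\in L^\infty$. You should make explicit that this, together with $\underline s\le s\le \bar s$, is exactly what turns the entropy source term in the $w$-equation and in the BD entropy inequality into a benign term controlled by $\|\rho r^{N-1}\|_{L^\infty}$ (handled by the weighted-origin estimate $\|\rho^{1/2}r^{\xi+(N-2)/2}\|_{L^\infty}\lesssim 1+\|\nabla\rho^{1/2}\|_{L^2}$, another ingredient your proposal does not supply).
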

	\begin{theorem}[Global classical solutions for $N=3$] Assume that 
		$$N=3, \quad 1<\gamma<3,$$\label{theorem 2.2}
		and the spherically symmetric initial data $(\rho_0, \mathbf{u_0},s_0)$ satisfies 
		\begin{align*}
			&0<\underline{\rho }\le \rho _0\le \bar{\rho},\quad 0<\underline{s}\le s_0\le \bar{s},\quad \frac{\partial _rs_0}{\rho _0r^2}\in L^{\infty},
			\\
			&(\rho _0,\mathbf{u}_0,s_0)\in H^3(\Omega ),\quad \mathbf{u}_0|_{\partial \Omega}=0.
		\end{align*}
		Then, there exists a unique global spherically symmetric classical solution to the initial-boundary value problem \eqref{1.1},\eqref{1.2} and \eqref{1.3} satisfying, for any $(x,t) \in \Omega \times[0,T]$, 
		\begin{equation}
			(C(T))^{-1} \le \rho(x,t) \le C(T), \quad 0 < \underline{s} \le s(x,t) \le \bar{s}.
		\end{equation}
		where the constant $C(T) > 0$ depends on the initial data and $T$.
	\end{theorem}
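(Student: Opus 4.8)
The plan is to build the solution by the continuation method: establish a local classical solution, derive a priori bounds on $[0,T]$ (for arbitrary $T$) that keep $\rho$ between two strictly positive constants and control every norm appearing in Definition 1.1, and then continue the local solution to a global one. I work throughout in the spherically symmetric form \eqref{3.3}, and all constants below may depend on the initial data and on $T$. I would first record the local existence of a unique classical solution on some $[0,T_*]$ in the class of Definition 1.1, obtained by a standard linearization/fixed-point scheme using $\rho_0\ge\underline\rho>0$, after which it suffices to prove the a priori estimates. The entropy is then disposed of immediately: since $s_t+us_r=0$, the maximum principle along particle paths gives $\underline s\le s\le\bar s$ on $\Omega\times[0,T]$, and, passing to the Lagrangian mass variable $m=\int_0^r\rho\,r'^{N-1}\,dr'$, the quantity $\partial_r s/(\rho r^{N-1})=\partial_m s$ is transported, so that $\|\partial_r s/(\rho r^{N-1})\|_{L^\infty(\Omega\times[0,T])}\le\|\partial_r s_0/(\rho_0 r^{N-1})\|_{L^\infty(\Omega)}$. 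This is the device that controls the variable entropy: in $P_r=\gamma\rho^{\gamma-1}e^{s}\rho_r+\rho^{\gamma}e^{s}s_r$ the extra term $\rho^{\gamma}e^{s}s_r\le Ce^{\bar s}\rho^{\gamma+1}r^{N-1}$ is comparable to the isentropic pressure gradient once $\rho$ is bounded above, and the analogous cross terms produced below are absorbed the same way.

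Next I would derive the two basic functionals. Testing the momentum equation with $u\,r^{N-1}$ and using continuity and $s_t+us_r=0$ gives the energy bound on $\sup_t\int_0^R(\tfrac12\rho u^2+\tfrac1{\gamma-1}\rho^{\gamma}e^{s})r^{N-1}\,dr$ and the dissipation $\int_0^T\!\!\int_0^R 2\rho(u_r^2+(N-1)\tfrac{u^2}{r^2})r^{N-1}\,dr\,dt$, together with conservation of the total mass. For the BD entropy inequality — the new ingredient in the non-isentropic setting — I would use that $\lambda(\rho)=\rho\mu'(\rho)-\mu(\rho)=0$ for $\mu(\rho)=2\rho$, introduce the BD velocity $v=u+2(\log\rho)_r$, and note that, because the antisymmetric part of $\nabla\mathbf u$ vanishes for radial fields, one has $\tfrac{D}{Dt}v=-P_r/\rho$. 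Testing the resulting equation for $\rho v$ against $v\,r^{N-1}$ produces the good dissipation $\gtrsim\int_0^R|\partial_r\rho^{\gamma/2}|^2 r^{N-1}\,dr$ and a cross term $\sim\int_0^R\rho^{\gamma-1}e^{s}\rho_r s_r\,r^{N-1}\,dr$, which is absorbed via the transported bound on $\partial_r s/(\rho r^{N-1})$, Cauchy–Schwarz and the energy bound (closing, if necessary, simultaneously with the density upper bound by a continuity argument). The outcome is $\sup_{0\le t\le T}\int_0^R(\rho v^2+\rho^{\gamma}e^{s})r^{N-1}\,dr+\int_0^T\!\!\int_0^R|\partial_r\rho^{\gamma/2}|^2 r^{N-1}\,dr\,dt\le C(T)$, and hence control of $\partial_r\rho^{\gamma/2}$ and of $\rho(\log\rho)_r^2$.

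The heart of the matter is the pair of density bounds. From continuity, $\tfrac{D}{Dt}\log\rho=-\dv\mathbf u=-\tfrac{F}{2\rho}-\tfrac12\rho^{\gamma-1}e^{s}$, where $F:=2\rho\,\dv\mathbf u-P$ is the effective viscous flux; integrating the momentum equation in $r$ (using the radial structure and $u(0,t)=0$) represents $F$ in terms of an Eulerian time derivative of mass-type integrals plus quantities already controlled by the energy, the BD bound and, near $r=0$, by $u(0,t)=0$. For the upper bound, the damping term $-\tfrac12\rho^{\gamma-1}e^{s}$ (here $\gamma>1$ is used) forces $\log\rho$ to be nonincreasing along characteristics whenever $\rho$ exceeds a $T$-dependent threshold, giving $\rho\le C(T)$; for the lower bound — the new estimate of the paper, different from that of Huang–Meng–Zhang — one then controls $\log\rho$ from below along characteristics, using the upper bound together with a refined estimate of $F/\rho$, which is also where the restriction $N=3$, $1<\gamma<3$ (and, in Theorem \ref{theorem 2.1}, $N=2$, $\gamma>1$) actually enters, obtaining $\rho\ge(C(T))^{-1}>0$. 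I expect this lower-bound estimate to be the main obstacle; everything else is comparatively routine.

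Finally, once $0<(C(T))^{-1}\le\rho\le C(T)$ and $\underline s\le s\le\bar s$, the momentum equation is uniformly parabolic in $u$, and I would close the higher-order estimates by the usual hierarchy — test against $u_t$; differentiate the equation in $t$ and test against $u_{tt}$; use one-dimensional elliptic regularity in $r$ for the spatial derivatives; and propagate the $H^2$– and $H^3$–regularity of $\rho$ and $s$ through their transport equations using $u_r\in L^1_tL^\infty_r$ — thereby obtaining all the bounds in Definition 1.1 on $[0,T]$ with constants depending only on the data and $T$. Combining these with the local solution and a continuation argument yields the global classical solution, and uniqueness follows from a Grönwall estimate for the difference of two solutions. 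The proof of Theorem \ref{theorem 2.1} ($N=2$) is entirely parallel, only the one-dimensional Sobolev constants and the exponents entering the BD and lower-bound estimates changing.
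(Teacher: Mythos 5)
Your sketch gets the scaffolding right (local existence, maximum principle for $s$, transport of $\partial_m s$ in the mass variable, basic energy and a BD-type estimate, continuation), but the central step — the upper and lower density bounds, which are precisely what make the result non-trivial and what limit the range of $\gamma$ — is both left as a gap and pursued along a route that differs from the paper and is not shown to close.

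You propose to run the classical Zlotnik/effective-viscous-flux argument: from $\tfrac{D}{Dt}\log\rho=-\dv\mathbf u=-\tfrac{F}{2\rho}-\tfrac12\rho^{\gamma-1}e^s$ with $F=2\rho\,\dv\mathbf u-P$, use the $-\tfrac12\rho^{\gamma-1}e^s$ damping for the upper bound and then ``a refined estimate of $F/\rho$'' for the lower bound. For density-dependent $\mu(\rho)=2\rho$, the effective flux does not enjoy the parabolic/commutator structure that makes this scheme work in the constant-$\mu$ settings you seem to have in mind, and the proposal never explains how to control $F/(2\rho)$ — which can be positive and push $\rho$ up — independently of the very density bounds one is trying to prove; you flag this yourself (``I expect this lower-bound estimate to be the main obstacle''), but a proof cannot leave the main obstacle as a gesture. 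The step you're missing is the paper's actual mechanism: work in Lagrangian mass coordinates, use the effective velocity $w=u+2r^{N-1}\partial_y\rho$ (which satisfies the transport-type equation $\partial_\tau w+r^{N-1}\partial_y P=0$), prove coupled $L^{2n}$ estimates for $u$ and $w$ simultaneously (each side's growth is controlled by the other's $L^{2n}$ norm, closed by Gr\"onwall), and then let $n\to\infty$ to get $\|u\|_{L^\infty},\|w\|_{L^\infty}\le C(T)$; the lower bound on $\rho$ then follows from the resulting $L^\infty$ bound on $r^{N-1}\partial_y\rho=\tfrac{w-u}{2}$ and a one-dimensional $W^{1,1}\hookrightarrow L^\infty$ argument applied to $v=\rho^{-1}$, while the upper bound in $3$D (Proposition~5.2) uses a separate $L^{2n}$ iteration with an auxiliary exponent $\sigma$. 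None of this appears in your proposal.

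A second, smaller omission: to make the BD inequality close in the non-isentropic case you need the weighted near-origin estimates $\|\rho^{1/2}r^{\xi}\|_{L^\infty}$ (for $N=2$) and $\|\rho^{1/2}r^{1/2+\xi}\|_{L^\infty}$ (for $N=3$) in terms of $\|\nabla\rho^{1/2}\|_{L^2}$ (the paper's Lemmas~2.2 and~2.3); you propose instead to absorb the cross term $\int\rho^{\gamma-1}e^s\rho_r s_r\,r^{N-1}\,dr$ ``by Cauchy--Schwarz and the energy bound, closing if necessary simultaneously with the density upper bound by a continuity argument.'' That bootstrap is exactly what the weighted lemmas avoid: they let the BD estimate close \emph{before} any density upper bound is available, which is essential since the $L^{2n}$ and $L^\infty$ estimates downstream rest on it. As written, your proposal inverts the logical order of the estimates and would have to be reorganized to work.
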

	\begin{remark}
		When $s$ is constant, Theorem \ref{theorem 2.1} and Theorem \ref{theorem 2.2} can be regarded as an extension and improvement of the results by Huang-Meng-Zhang\cite{Huang-Meng-Zhang}. Here, we remove the restriction $\gamma \geq \frac{3}{2}$ in the two-dimensional case and extend the results to three dimensional case.
	\end{remark}
	The following corollary considers the case where $s$ is constant, namely the viscous shallow water model. This corresponds to the work of Huang-Meng-Zhang \cite{Huang-Meng-Zhang},
	where uniform upper and lower bounds on density and large-time behavior were obtained.
	\begin{corollary}[Uniform bounds on density and large-time behavior]\label{cor1.1}
		Assume that 
		$$N=2, \quad \gamma>1,$$
		or 
		$$N=3, \quad 1<\gamma<2,$$
		and the spherically symmetric initial data ($\rho_0,\mathbf{u}_0$) satisfies
		$$0<\underline{\rho }\le \rho _0\le \bar{\rho},\quad (\rho _0,\mathbf{u}_0)\in H^3(\Omega ),\quad \mathbf{u}_0|_{\partial \Omega}=0.
		$$
		Then, there exists a unique global spherically symmetric classical solution (see Huang-Meng-Zhang\cite{Huang-Meng-Zhang} Definition 2.6) to the initial-boundary value problem satisfying, for any $(x,t) \in \Omega \times[0,T]$, 
		\begin{equation*}
			C^{-1} \le \rho(x,t) \le C,
		\end{equation*}
		where the constant $C > 0$ depends on the initial data but is independent of $T$. Moreover, the following large-time behavior holds
		$$
		\lim_{t\to\infty} \|\rho(t) -\frac{1}{\left| \Omega \right|}\int_{\Omega}{\rho _0dx} \|_{C(\bar{\Omega})} = 0,
		$$
		and
		$$
		\lim_{t\to\infty} \left( \|\nabla\rho(t)\|_{L^2(\Omega)} + \|\nabla \mathbf{u}(t)\|_{L^2(\Omega)} + \|\nabla^2 \mathbf{u}(t)\|_{L^2(\Omega)} \right) = 0.
		$$
	\end{corollary}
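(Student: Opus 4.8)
The starting observation is that when $s\equiv\mathrm{const}$ the transport equation $s_t+\mathbf{u}\cdot\nabla s=0$ holds automatically and the pressure $P=\rho^{\gamma}e^{s}$ is, up to the fixed positive constant $e^{s}$, the isentropic law $P=c\rho^{\gamma}$; hence \eqref{1.1} is precisely the viscous shallow water system (isentropic Navier--Stokes with $\mu(\rho)=2\rho$, $\lambda(\rho)=0$). Thus Theorem \ref{theorem 2.1} (for $N=2$, $\gamma>1$) and Theorem \ref{theorem 2.2} (for $N=3$, $1<\gamma<2$, which lies in the admissible range $1<\gamma<3$) already supply, with $s_{0}$ constant, a unique global spherically symmetric classical solution together with the $T$-\emph{dependent} bounds $(C(T))^{-1}\le\rho\le C(T)$. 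Everything that remains is to make these bounds, and the estimates behind them, independent of $T$, and then to extract the large-time behavior. The basic energy identity and the BD entropy identity are, in the constant-entropy case, genuine identities with good dissipation rather than Gronwall arguments, so they already come with $T$-independent constants:
\begin{equation*}
	\sup_{t\ge0}\int_{\Omega}\Big(\tfrac12\rho|\mathbf{u}|^{2}+\tfrac12\rho\,|\mathbf{u}+2\nabla\log\rho|^{2}+\frac{\rho^{\gamma}}{\gamma-1}\Big)\,dx+\int_{0}^{\infty}\!\!\int_{\Omega}\big(\rho|\nabla\mathbf{u}|^{2}+|\nabla\rho^{\gamma/2}|^{2}\big)\,dx\,dt\le C_{0},
\end{equation*}
with $C_{0}$ depending only on the data; in particular $\sup_{t}\|\nabla\sqrt{\rho}(t)\|_{L^{2}}\le C_{0}$, which in the radial variable controls $\partial_{r}\sqrt{\rho}$ in $L^{2}(dr)$ away from the origin.

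Next I would establish the $T$-independent pointwise density bounds. For the \emph{upper} bound the natural device in the spherically symmetric setting is the effective viscous flux: passing to the Lagrangian mass variable one rewrites the reduced momentum equation so that a quantity of the form $G=2\rho\,\dv\mathbf{u}-P$ (plus $1/r$-type corrections) obeys a transport-type equation, and integrating along particle trajectories gives a representation of $\log\rho$ in terms of time integrals of $G$ and of the pressure, which the uniform energy and BD bounds control uniformly in $T$. This is the step where the three-dimensional restriction $\gamma<2$ enters: the $1/r$-singular terms near the origin are more severe when $N=3$, and $\gamma<2$ is what keeps their contribution, together with the pressure contribution, to that representation integrable without a constant growing in $T$. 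For the \emph{lower} bound I would use an argument different from that of \cite{Huang-Meng-Zhang}, combining the BD control of $\nabla\log\rho$ with the constraints coming from the boundary condition $\rho\mathbf{u}|_{\partial\Omega}=0$ and from mass conservation (the spatial average of $\rho$ is pinned, so $\rho$ cannot decay to $0$ uniformly) to derive a differential inequality that, upon integration, keeps $\rho$ bounded below by a constant depending only on the data. Together these give $C^{-1}\le\rho(x,t)\le C$ for all $(x,t)$ with $C$ independent of $T$; with these bounds in hand one then reruns the remaining steps of the proof of Theorems \ref{theorem 2.1}--\ref{theorem 2.2} while tracking the $T$-dependence, so that the whole a priori hierarchy (including $\sup_{t}\|\nabla\rho\|_{L^{2}}$, $\int_{0}^{\infty}\|\sqrt{\rho}\,\mathbf{u}_{t}\|_{L^{2}}^{2}\,dt$, and the relevant second-order norms) becomes $T$-independent in the stated range of $\gamma$.

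For the large-time behavior, the uniform upper and lower density bounds turn the two dissipations into $\int_{0}^{\infty}\big(\|\nabla\rho(t)\|_{L^{2}}^{2}+\|\sqrt{\rho}\,\nabla\mathbf{u}(t)\|_{L^{2}}^{2}\big)\,dt<\infty$. Differentiating these quantities in $t$ and using the mass and momentum equations with the $T$-uniform higher-order bounds shows that $t\mapsto\|\nabla\rho(t)\|_{L^{2}}^{2}+\|\sqrt{\rho}\,\nabla\mathbf{u}(t)\|_{L^{2}}^{2}$ has derivative in $L^{1}(0,\infty)$, hence tends to $0$; in particular $\|\nabla\rho(t)\|_{L^{2}}\to0$ and $\|\nabla\mathbf{u}(t)\|_{L^{2}}\to0$. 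Since $\int_{\Omega}\big(\rho-\tfrac{1}{|\Omega|}\int_{\Omega}\rho_{0}\,dx\big)\,dx=0$ by conservation of mass, the Poincaré inequality gives $\|\rho(t)-\tfrac{1}{|\Omega|}\int_{\Omega}\rho_{0}\,dx\|_{L^{2}}\le C\|\nabla\rho(t)\|_{L^{2}}\to0$, and interpolating with the uniform higher-order bound on $\rho$ (equivalently, using $H^{1}(dr)\hookrightarrow C^{0,1/2}$ in the radial variable away from the origin together with the control near the origin) upgrades this to convergence in $C(\bar\Omega)$. Finally, viewing the momentum equation as a uniformly elliptic (Lamé-type) system for $\mathbf{u}$ with right-hand side $\rho\dot{\mathbf{u}}+\nabla P-2\nabla\rho\cdot\mathcal{D}\mathbf{u}$ — each term of which tends to $0$ in $L^{2}$, using in addition $\|\sqrt{\rho}\,\mathbf{u}_{t}(t)\|_{L^{2}}\to0$ — elliptic regularity yields $\|\nabla^{2}\mathbf{u}(t)\|_{L^{2}}\to0$.

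The main obstacle is the $T$-independent \emph{lower} bound on $\rho$: the classical effective-flux representation naturally produces constants that grow in $T$, and the degeneracy of the spherical coordinates at $r=0$ makes pointwise control near the origin delicate, so a genuinely new estimate — not the one of \cite{Huang-Meng-Zhang} — is required there. This is closely tied to the need to render the entire energy/BD/regularity hierarchy $T$-uniform, which is exactly what fails for large $\gamma$ in three dimensions and forces the restriction $1<\gamma<2$.
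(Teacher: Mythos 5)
Your overall sketch identifies the right starting observation (with $s\equiv\mathrm{const}$ the source terms in the BD computation vanish, so the energy and BD relations are genuine identities and the resulting bounds are $T$-independent), and your outline of the large-time asymptotics (finite time integral of the dissipations, derivative in $L^1$, then Poincar\'e plus an embedding for uniform convergence, then elliptic regularity for $\nabla^2\mathbf{u}$) matches the standard argument that the paper invokes via \cite{Huang-Meng-Zhang}. However, there is a genuine gap in the central step, and the logical architecture is the reverse of the paper's.

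The paper does not obtain the $T$-independent lower bound on $\rho$ from an effective-viscous-flux representation, nor from a differential inequality driven by BD control of $\nabla\log\rho$ and mass conservation, which is what you propose. Instead it proceeds in the opposite order: the time-\emph{uniform} upper bound on $\rho$ (obtained from the $L^4$ chain giving $\sup_t\|\nabla\rho^{1/4}\|_{L^4}\le C$, not from an effective-flux trajectory integral) together with the global-in-time dissipation integrals such as $\int_0^\infty\|\nabla\rho^{\gamma/2}\|_{L^2}^2\,dt<\infty$ are used \emph{first} to show that $\rho(t)\to\frac{1}{|\Omega|}\int_\Omega\rho_0\,dx>0$ uniformly as $t\to\infty$; only then does this positive limit yield $\rho(t)\ge\bar\rho/2$ for $t\ge T_0$, and combined with the $T_0$-dependent lower bound from Theorems~\ref{theorem 2.1}--\ref{theorem 2.2} on $[0,T_0]$ one gets a $T$-independent lower bound. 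Your proposed route — derive a $T$-independent pointwise lower bound directly and then derive the asymptotics — would require a new mechanism you do not actually supply: BD only controls $\nabla\sqrt{\rho}$ in $L^2$, so neither $\nabla\log\rho$ nor a pointwise-at-the-origin lower bound is directly under control; mass conservation fixes the average but does not preclude $\rho$ from touching zero locally; and the $L^\infty$ bounds on $u$ and $w$ used in the paper's direct lower-bound argument (Propositions~\ref{pro4.4}, \ref{pro4.5}, \ref{Pro5.4}) are Gronwall-generated and hence $T$-dependent even when $s$ is constant. In short, you have correctly flagged the lower bound as the hard point, but you have both misattributed the paper's method (no effective-flux integral is used) and left the step you would actually need unestablished, while the paper resolves it by running the asymptotics before the lower bound rather than after.
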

	\begin{remark}
		The reason why the isentropic model admits uniform lower and upper bounds on density and large-time behavior is that uniform estimates for the density can be established. These estimates lead to the desired uniform density bounds and asymptotic behavior.
	\end{remark}
	We now introduce Lagrangian coordinates to simplify the following analysis.
	We define the coordinates transformation
	\begin{equation}
		y(r,t) = \int_0^r \rho(s,t)s^{N-1} \, ds, \quad \tau(r,t) = t.
	\end{equation}
	Without loss of generality, assume that $$\int_0^R \rho r^{N-1} \, dr = \int_0^R \rho_0 r^{N-1}\, dr = 1.$$ 
	Therefore, we translate the domain $[0, R] \times [0, T]$ into $[0, 1] \times [0, T]$, which satisfies
	\begin{equation}
		\frac{\partial y}{\partial r} = \rho r^{N-1}, \quad \frac{\partial y}{\partial t} = -\rho u r^{N-1}, \quad \frac{\partial \tau}{\partial r} = 0, \quad \frac{\partial \tau}{\partial t} = 1, \quad \frac{\partial r}{\partial \tau} = u.
	\end{equation}
	The system \eqref{3.3} transforms  into
	\begin{equation}\label{4.4}
		\begin{cases}
			\partial_{\tau}\rho + \rho^2 \partial_y(r^{N-1}u) = 0, \\
			\partial_{\tau}s=0,\\
			\displaystyle \frac{1}{r^{N-1}}\partial_{\tau}u + \partial_y P - \partial_y\left[ 2\rho^2 \partial_y(r^{N-1}u) \right] + \frac{2(N-1)}{r} u \partial_y \rho = 0.
		\end{cases}
	\end{equation}
	Let $w=u+2r^{N-1}\partial_y \rho$ be the effective velocity. Then the system for effective velocity becomes
	\begin{equation}\label{4.6}
		\begin{cases}
			\partial_{\tau}\rho + \rho^2 \partial_y(r^{N-1}u) = 0, \\
			\partial_{\tau}s=0,
			\\
			\partial_{\tau}w+r^{N-1}\partial_yP=0.
		\end{cases}
	\end{equation}
	We now outline the main strategy of the proof. We must address two major challenges: first, establishing the B-D entropy estimates for non-isentropic viscous shallow water waves; and second, improving upon the 2D indices of Huang-Meng-Zhang \cite{Huang-Meng-Zhang} and extending these results to the 3D case.\\
	\textbf{1. The BD entropy inequality for non-isentropic fluids.}\par
	In contrast to the isentropic case, where the pressure term provides a dissipation term when tested with the effective velocity $w=u+2r^{N-1}\partial_y \rho$, for non-isentropic flows, the entropy variation leads to the emergence of both a dissipation term and a source term. However, despite the lack of the standard BD entropy identity, the BD entropy inequality can still be derived by coupling the system with basic energy estimates. Therefore, we find a structure similar to the BD condition in the non-isentropic setting. In fact, in Lagrangian coordinates, we arrive at 
	$$\begin{aligned}
		&\frac{d}{d\tau} \left( \int_0^1 \frac{1}{2}w^2 \, dy + \int_0^1 \frac{\rho^{\gamma-1}}{\gamma-1} e^{s} \, dy \right) \\
		&\quad + \underbrace{ \int_0^1 2\gamma r^{2N-2} \rho^{\gamma-1} e^s (\partial_y \rho)^2 \, dy}_{\text{Dissipation term}}
		= -\underbrace{ \int_0^1 2 r^{2N-2} \rho^\gamma e^s (\partial_y s) (\partial_y \rho) \, dy}_{\text{Source term}}.
	\end{aligned}$$
	To handle the source term, we need to deal with the $\partial_y s$ at first. 
	In general, estimates for the derivatives of $s$ would require the $L^1 L^\infty$ norm of $\nabla \mathbf{u}$. Fortunately, the entropy is invariant along particle paths in the current setting, satisfying $s_{y\tau}=0$ in Lagrangian coordinates. Consequently, provided that the initial derivative $\partial_y s_0$ is bounded, $\partial_y s$ remains bounded throughout the evolution. For this reason, we impose an additional condition on the initial entropy in Eulerian coordinates, i.e. 
	$\frac{\partial _rs_0}{\rho _0r^{N-1}}\in L^{\infty}.$ In particular, the case $s=\text{const}$ satisfies this condition. Therefore, by using Young's inequality, we only need to deal with the term $\|\rho r^{N-1}\|_{L^\infty}^2$. By using weighted estimates near the origin
	and Sobolev's embedding in one-dimensional space
	$$
	\begin{aligned}
		\| \rho r^{N-1} \|_{L^{\infty}(0,R)}^{2}
		&\lesssim \left( \| \rho_y r^{N-1} \|_{L^1(0,1)}^{2} + \| \frac{1}{r} \|_{L^1(0,1)}^{2} \right) \\
		&\lesssim \left( \| \rho_y r^{N-1} \|_{L^1(0,1)}^{2} + \| \rho^{\frac{1}{2}}r^{\xi +\frac{1}{2}(N-2)} \|_{L^{\infty}} \right) \\
		&\lesssim \left( \| \rho_y r^{N-1} \|_{L^1(0,1)}^{2} + 1 \right) \\
		&\lesssim \left( \| w \|_{L^2(0,1)}^{2} + \| u \|_{L^2(0,1)}^{2} + 1 \right).
	\end{aligned}
	$$
	Consequently, we get the BD entropy inequality by combining with the basic energy identity and using Gronwall's inequality.\\
	\textbf{2. Global spherically symmetric classical solutions for the multi-dimensional case}\par 
	To derive the higher-order estimates and thereby prove the existence of global solutions, we first establish the lower-order estimates
	$$
	\begin{aligned}
		\mathop {\mathrm{sup}} \limits_{0\le t\le T}&\left( \| \mathbf{u}\| _{L^4} +\| \rho \| _{L^{\infty}}+\| \rho ^{-1}\| _{L^{\infty}}+\| \nabla \rho \| _{L^2}+\| \nabla \rho \| _{L^4} \right) 
		\\
		&+\int_0^T{\left( \| \nabla \rho \| _{L^2}^{2}+\| \nabla \rho \| _{L^4}^{4}+\|\nabla \mathbf{u}\| _{L^2}^{2} \right)}dt\le C(T).
	\end{aligned}
	$$
	The most crucial step here is to derive the upper and lower bounds for the density. In a certain sense, this serves as a blow-up criterion. We now proceed to outline the strategy for obtaining the upper and lower bounds of the density.\\
	\textbf{2.1. Upper bound for density}\par
	The BD entropy estimates initially yield a bound on $\| \nabla \rho^{1/2} \|_{L^2(\Omega)}$, but this is insufficient to determine the upper bound for the density. The crucial step lies in estimating $\| \nabla \rho^{\frac{1}{2n}} \|_{L^{2n}(\Omega)}$($n \ge 2$). We employ different methods to derive the upper bound of the density in the two-dimensional and three-dimensional cases. 
	\\
	\textbf{2.2. Lower bound for density}\par
	The reason we can extend the results of Huang-Meng-Zhang \cite{Huang-Meng-Zhang} for viscous shallow water equations to the range $\gamma > 1$ in 2D and $1 < \gamma < 3$ in 3D is that a lower bound for the density can be established in these cases. The essence of their method lies in deriving  $L^\infty$ bounds for both the effective velocity and the velocity field. Specifically, Huang-Meng-Zhang \cite{Huang-Meng-Zhang} first established the $L^\infty$ estimate for the effective velocity, followed by that of the velocity itself. A crucial step in their proof is the following critical estimate for spherically symmetric functions
	$$
	\begin{aligned}
		\| \sqrt{\rho}\mathbf{u} \| _{L^{\infty}(\Omega )} &\lesssim \| \sqrt{\rho}\nabla \mathbf{u} \| _{L^2(\Omega )} + \| \mathbf{u}\nabla \sqrt{\rho} \| _{L^2(\Omega )} \\
		&\lesssim \| \sqrt{\rho}\nabla \mathbf{u} \| _{L^2(\Omega )} + \| \nabla \rho ^{\frac{1}{4}} \| _{L^4(\Omega )} \| \rho ^{\frac{1}{4}}\mathbf{u} \| _{L^4(\Omega )}.
	\end{aligned}
	$$
	Consequently, this implies that estimating the effective velocity relies on controlling powers of the density, which leads to the restriction $\gamma \ge 3/2$ in the two-dimensional case. Furthermore, this dependency poses difficulties for extending the analysis to the three-dimensional setting.\par
	Our main observation is that, once the density upper bound is obtained, we can estimate $\|w\|_{L^\infty}$ and $\|u\|_{L^\infty}$ jointly. This is achieved by coupling two seemingly unrelated $L^{2n}$ estimates
	$$
	\begin{aligned}
		\frac{1}{2n}\frac{d}{d\tau} \|w\|_{L^{2n}}^{2n} 
		&\lesssim \int_0^1 \left| \rho^{\gamma -1}e^s u w^{2n-1} \right| \,dy + \int_0^1 \left| \rho^{\gamma}e^s r w^{2n-1}(\partial_y s) \right| \,dy \\
		&\lesssim \|w\|_{L^{2n}}^{2n} + \|u\|_{L^{2n}}^{2n} + 1, \\[1em]
		\frac{1}{2n}\frac{d}{d\tau} \|u\|_{L^{2n}}^{2n} 
		&\lesssim \int_0^1 \left| \partial_y P \right| r |u|^{2n-1} \,dy \\
		&\lesssim   \|w\|_{L^{2n}}^{2n} + \|u\|_{L^{2n}}^{2n}+1 .
	\end{aligned}
	$$
	Combining these estimates and applying Gronwall's inequality, we establish the $L^\infty$ bounds for the velocity and the effective velocity as $n \to \infty$. One can observe an intrinsic connection between $u$ and $w$. Moreover, since this derivation does not rely on the assumption of spherical symmetry, it may provide insights for the study of general domains.
	\section{Preliminary}
	The local existence theory for classical solutions stated below can be established using standard methods similar to those in \cite{Li-Pan-Zhu,Zhang-Zhao,Zhang}, so we omit the proof.
	\begin{lemma}\label{lemma2.1}
		Assume that the spherically symmetric initial data $(\rho_0, \mathbf{u}_0,s_0)$ and $\gamma$ satisfy the conditions of Theorem \ref{theorem 2.1} and Theorem \ref{theorem 2.2}. Then there exists a small time $T_0 > 0$ such that the initial-boundary-value problem \eqref{1.1},\eqref{1.2} and \eqref{1.3} has a unique symmetric classical solution $(\rho, \mathbf{u}, s)$ on $\Omega \times [0, T_0]$ with $\rho,s > 0$, and for every $0 < \tau < T_0$, satisfying
		$$\begin{cases}
			\rho, s \in C([0,T_0];H^3(\Omega)), \quad \rho_t, s_t \in C([0,T_0];H^2(\Omega)),\\
			\rho_{tt}, s_{tt} \in L^{\infty}(0,T_0;L^2(\Omega)) \cap L^2(0,T_0;H^1(\Omega)),\\
			\mathbf{u} \in C([0,T_0];H_{0}^{1}(\Omega) \cap H^3(\Omega)) \cap L^2(0,T_0;H^4(\Omega)) \cap L^{\infty}(\tau,T_0;H^4(\Omega)),\\
			\mathbf{u}_t \in L^{\infty}(0,T_0;H_{0}^{1}(\Omega)) \cap L^2(0,T_0;H^2(\Omega)) \cap L^{\infty}(\tau,T_0;H^2(\Omega)),\\
			\mathbf{u}_{tt} \in L^2(0,T_0;L^2(\Omega)) \cap L^{\infty}(\tau,T_0;L^2(\Omega)) \cap L^2(\tau,T_0;H^1(\Omega)).
		\end{cases}$$
	\end{lemma}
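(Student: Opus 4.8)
The plan is to construct the solution by a standard linearization-plus-iteration scheme applied to the reduced one-dimensional system \eqref{3.3}, to close uniform energy estimates on a short time interval, and then to pass to the limit via a contraction argument in a weaker norm. The key simplification is that the hypotheses of Theorems \ref{theorem 2.1} and \ref{theorem 2.2} force $\rho_0 \ge \underline{\rho} > 0$, so on a sufficiently short interval $[0,T_0]$ the density stays strictly positive, the momentum equation in \eqref{3.3} is uniformly parabolic, and the degeneracy of $\mu(\rho)=2\rho$ at vacuum plays no role in the local theory. Throughout, we work in the weighted Sobolev spaces natural to radially symmetric functions, equivalently regarding $\rho$, $s$ and $u\,\mathbf{x}/r$ as genuine elements of $H^3(\Omega)$.

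We first set up the iteration. With $(\rho^{0},u^{0},s^{0})$ the time-independent extension of the initial data and $(\rho^{k},u^{k},s^{k})$ given, define $\rho^{k+1}$ as the solution of the linear continuity equation $\rho^{k+1}_t+(\rho^{k+1}u^{k})_r+\tfrac{N-1}{r}\rho^{k+1}u^{k}=0$, define $s^{k+1}$ as the solution of the linear transport equation $s^{k+1}_t+u^{k}s^{k+1}_r=0$, and define $u^{k+1}$ as the solution of the linear parabolic problem obtained from the momentum equation of \eqref{3.3} by freezing the coefficients at $\rho^{k+1}$, $s^{k+1}$, $u^{k}$, with $u^{k+1}(0,t)=u^{k+1}(R,t)=0$. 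The two transported quantities are easy to control: $\rho^{k+1}$ is obtained from $\rho_0$ along the flow of $u^{k}$, so its positivity, its $H^3$ bound, and bounds for $\rho^{k+1}_t$, $\rho^{k+1}_{tt}$ follow from the method of characteristics and Gronwall's inequality in terms of $\sup_{[0,T_0]}\|u^{k}\|_{H^3}$; similarly $s^{k+1}$ is carried from $s_0$ along the same flow, so $\underline{s}\le s^{k+1}\le\bar{s}$ is preserved, and, since $s^{k+1}$ is constant along characteristics, the weighted bound on $\partial_r s_0/(\rho_0 r^{N-1})$ propagates (this is precisely what that hypothesis is for). The parabolic problem for $u^{k+1}$ is solved by the usual $L^2$-based energy hierarchy --- testing against $u^{k+1}$, $u^{k+1}_t$, $u^{k+1}_{tt}$ and suitable spatial derivatives --- combined with elliptic regularity for the stationary operator $-2\partial_r(\rho^{k+1}\partial_r\,\cdot\,)-2(N-1)\tfrac{\rho^{k+1}}{r}\big(\partial_r\,\cdot\,-\tfrac{\cdot}{r}\big)$ to convert time-derivative control into the spatial $H^4$-type bounds appearing in the statement. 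One checks that this closes on an interval $[0,T_0]$ depending only on the initial data, producing a bounded invariant set in the high-norm space.

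We then prove convergence: after shrinking $T_0$ if necessary, the map $(\rho^{k},u^{k},s^{k})\mapsto(\rho^{k+1},u^{k+1},s^{k+1})$ is a contraction in a weaker norm --- $C([0,T_0];L^2)$ for the differences of $\rho$ and $s$, together with $C([0,T_0];L^2)\cap L^2(0,T_0;H^1_0)$ for the difference of $u$ --- by routine difference estimates. This yields a limit $(\rho,u,s)$; the uniform high-norm bounds, together with weak-$*$ compactness and interpolation, upgrade the convergence to the regularity and continuity-in-time assertions, and uniqueness follows by the same difference estimate applied to two solutions with identical data. Finally, the time-weighted bounds $u\in L^\infty(\tau,T_0;H^4)$, $u_t\in L^\infty(\tau,T_0;H^2)$, $u_{tt}\in L^\infty(\tau,T_0;L^2)\cap L^2(\tau,T_0;H^1)$ are obtained from parabolic smoothing of the momentum equation away from $t=0$: differentiating in time and running the energy hierarchy against $t^{m}\partial_t^{m}u$ gains one derivative for $t\ge\tau$ although the data is only $H^3$.

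The main obstacle is the coordinate singularity at $r=0$. The operators in \eqref{3.3} carry factors $1/r$ and $1/r^{2}$, so the energy estimates must be performed in the weighted norms appropriate to radial functions, and at each step one has to verify that $u^{k+1}\mathbf{x}/r$ is a genuine $H^3(\Omega)$ vector field and that $\rho^{k+1}$, $s^{k+1}$ are genuine $H^3(\Omega)$ functions; this is where the assumption $(\rho_0,\mathbf{u}_0,s_0)\in H^3(\Omega)$ with $\mathbf{u}_0|_{\partial\Omega}=0$ enters, together with the natural compatibility of the data at the origin and the boundary. Since this construction has been carried out in detail in \cite{Li-Pan-Zhu,Zhang-Zhao,Zhang}, we indicate only the scheme and omit the routine computations.
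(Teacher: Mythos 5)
The paper itself gives no proof, stating only that the result follows by ``standard methods similar to those in \cite{Li-Pan-Zhu,Zhang-Zhao,Zhang},'' and your sketch is precisely a correct outline of that standard linearization--iteration--contraction scheme, ending with the same citations. Your proposal therefore matches the paper's approach and is a reasonable expansion of the omitted argument.
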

	We now present two lemmas that are crucial for the subsequent estimates. These results establish the relationship between the weighted estimates near the origin and $\|\nabla \rho^{1/2}\|_{L^2(\Omega)}$.
	\begin{lemma}\label{lemma 2D}
		In the two-dimensional radially symmetric domain $B_R$, for any $0 < \xi \ll 1$, we have
		\begin{equation} \label{5.13}
			\mathop {\mathrm{sup}} \limits_{0\le t\le T}\| \rho ^{\frac{1}{2}}r^{\xi} \| _{L^{\infty}(0,R)}\le  C(\xi) \left( 1 + \|\nabla \rho^{\frac{1}{2}}\|_{L^2(\Omega)} \right).
		\end{equation}
		where $C(\xi)$ is a positive constant independent of $T$ and depending on $R$ and $\| \rho \|_{L^1(\Omega)}$.
	\end{lemma}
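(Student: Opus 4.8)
The plan is to combine a one-dimensional fundamental-theorem-of-calculus argument with the conserved total mass to pin down the value of $\rho^{1/2}$ at a well-chosen reference radius, and then to observe that the only obstruction to an $L^\infty$ bound for $\rho^{1/2}$ — a possible logarithmic blow-up at the origin — is exactly absorbed by the weight $r^\xi$ for any $\xi>0$. Throughout, $\rho(\cdot,t)\in H^3(\Omega)$ with $\rho>0$ by Lemma \ref{lemma2.1}, so $\rho^{1/2}$ is $C^1$ in $r$ and all manipulations below are legitimate.

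\textbf{Step 1: a good reference radius from conservation of mass.} Since $\int_0^R \rho(\sigma,t)\sigma\,d\sigma = (2\pi)^{-1}\|\rho(\cdot,t)\|_{L^1(\Omega)} =: m$ is independent of $t$ (continuity equation), we have $\int_{R/2}^{R}\rho\sigma\,d\sigma \le m$ on an interval of length $R/2$, hence there is $r_*=r_*(t)\in[R/2,R]$ with $\rho(r_*)r_* \le 2m/R$, so that $\rho^{1/2}(r_*)\le 2\sqrt{m}/R$ and $\rho^{1/2}(r_*)\,r_*^{\xi}\le 2\sqrt{m}\,R^{\xi-1}$, a bound depending only on $R$ and $\|\rho\|_{L^1(\Omega)}$ and uniform in $t$.

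\textbf{Step 2: FTC and weighted Cauchy--Schwarz.} For any $r\in(0,R]$, write $\rho^{1/2}(r) = \rho^{1/2}(r_*) - \int_r^{r_*}\partial_\sigma \rho^{1/2}\,d\sigma$ and estimate, using that in two dimensions $\|\nabla\rho^{1/2}\|_{L^2(\Omega)}^2 = 2\pi\int_0^R|\partial_\sigma\rho^{1/2}|^2\sigma\,d\sigma$,
$$
\int_r^{R}|\partial_\sigma\rho^{1/2}|\,d\sigma \le \Big(\int_r^R|\partial_\sigma\rho^{1/2}|^2\sigma\,d\sigma\Big)^{1/2}\Big(\int_r^R\sigma^{-1}\,d\sigma\Big)^{1/2} \le (2\pi)^{-1/2}\,\|\nabla\rho^{1/2}\|_{L^2(\Omega)}\,\big(\log(R/r)\big)^{1/2}.
$$
Together with Step 1 this yields the pointwise bound $\rho^{1/2}(r)\le C\big(1 + \|\nabla\rho^{1/2}\|_{L^2(\Omega)}(\log(R/r))^{1/2}\big)$ for all $r\in(0,R]$, with $C$ depending only on $R$ and $\|\rho\|_{L^1(\Omega)}$.

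\textbf{Step 3: absorbing the logarithm into $r^\xi$.} Multiplying by $r^\xi$ and using that $\sup_{0<r\le R} r^\xi(\log(R/r))^{1/2}<\infty$ for every $\xi>0$ — substitute $t=\log(R/r)\ge0$, giving $R^\xi e^{-\xi t}t^{1/2}$ with maximum $R^\xi(2e\xi)^{-1/2}$ — we obtain $\rho^{1/2}(r)\,r^\xi \le C(\xi)\big(1 + \|\nabla\rho^{1/2}\|_{L^2(\Omega)}\big)$ with $C(\xi)$ depending on $R$, $\|\rho\|_{L^1(\Omega)}$, $\xi$, and blowing up like $\xi^{-1/2}$ as $\xi\to0$. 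Taking the supremum over $r\in(0,R]$ and $t\in[0,T]$ gives \eqref{5.13}.

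\textbf{Main obstacle.} There is no serious difficulty here; the one point that must be respected is that $\rho^{1/2}$ genuinely need not be bounded (it may grow like $(\log(1/r))^{1/2}$ near the origin), so one cannot aim for a naive $L^\infty$ estimate — the weight $r^\xi$ is precisely what dominates this growth, which is also why the constant must be allowed to degenerate as $\xi\to0$. A minor bookkeeping point is that although $r_*$ depends on $t$, the bound $\rho(r_*,t)\le 4m/R^2$ is $t$-independent by mass conservation, so the final estimate is uniform in $t$ as stated.
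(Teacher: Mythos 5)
Your proof is correct, and it takes a genuinely different route from the paper's. The paper first applies the one--dimensional Sobolev embedding $W^{1,1}(0,R)\hookrightarrow L^\infty(0,R)$ to $\rho^{1/2}r^\xi$, converts the resulting 1D integrals to weighted integrals over $\Omega$, and then invokes the two--dimensional Gagliardo--Nirenberg inequality $\|\rho^{1/2}\|_{L^{4/\xi}(\Omega)}\lesssim\|\nabla\rho^{1/2}\|_{L^2(\Omega)}+\|\rho^{1/2}\|_{L^2(\Omega)}$, with the $\xi$--dependence of the constant coming from the $L^p$ exponent $4/\xi\to\infty$. You instead work purely one--dimensionally: mass conservation produces a reference radius $r_*\in[R/2,R]$ where $\rho^{1/2}$ is a priori small, the fundamental theorem of calculus with a weighted Cauchy--Schwarz yields the sharp pointwise bound $\rho^{1/2}(r)\lesssim 1+\|\nabla\rho^{1/2}\|_{L^2(\Omega)}(\log(R/r))^{1/2}$, and the weight $r^\xi$ then explicitly kills the logarithm. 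This makes transparent the precise mechanism behind the lemma — the only possible blow--up of $\rho^{1/2}$ at the origin is logarithmic and is dominated by any power weight — whereas the paper's route hides this in the 2D Sobolev constant. Both methods give $C(\xi)\sim\xi^{-1/2}$ as $\xi\to0$, and both rely only on radial symmetry, $\rho>0$, $\rho\in H^1$, and $\|\rho\|_{L^1}$, so they are of equal generality here. One small bookkeeping point in your Step 2: for $r>r_*$ the interval of integration is $[r_*,r]\subset[R/2,R]$, so the factor is $\log(r/r_*)\le\log 2$ rather than $\log(R/r)$; this only makes the bound better, but strictly speaking the displayed inequality with $\log(R/r)$ should be read as holding for $r\le r_*$, with the complementary case $r\in(r_*,R]$ handled trivially since there $r^\xi\le R^\xi$ and the log factor is $O(1)$.
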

	\begin{proof}
		Using Sobolev's embedding inequality in one dimension and Cauchy-Schwarz inequality,
		\begin{equation}
			\begin{aligned}
				\|\rho^{\frac{1}{2}}r^{\xi}\|_{L^{\infty}(0,R)} 
				&\le C\int_0^R \rho^{\frac{1}{2}}r^{\xi}\,dr + C\int_0^R |\partial_r \rho^{\frac{1}{2}}|r^{\xi}\,dr + C\int_0^R \rho^{\frac{1}{2}}r^{\xi -1}\,dr \\
				&\le C\int_{\Omega} \rho^{\frac{1}{2}}r^{\xi -1}\,dx + C\int_{\Omega} |\nabla \rho^{\frac{1}{2}}|r^{\xi -1}\,dx + C\int_{\Omega} \rho^{\frac{1}{2}}r^{\xi -2}\,dx \\
				&\le C \left( \int_{\Omega} |\nabla \rho^{\frac{1}{2}}|^2\,dx \right)^{\frac{1}{2}} \left( \int_{\Omega} r^{2\xi -2}\,dx \right)^{\frac{1}{2}} \\
				&\quad + C \left( \int_{\Omega} \rho^{\frac{2}{\xi}}\,dx \right)^{\frac{\xi}{4}} \left( \int_{\Omega} r^{\frac{4\xi -8}{4-\xi}}\,dx \right)^{\frac{4-\xi}{4}} \\
				&\le C(\xi) \left( \int_{\Omega} |\nabla \rho^{\frac{1}{2}}|^2\,dx \right)^{\frac{1}{2}} + C(\xi) \|\rho^{\frac{1}{2}}\|_{L^{\frac{4}{\xi}}(\Omega)} \\
				&\le C(\xi) \|\nabla \rho^{\frac{1}{2}}\|_{L^2(\Omega)} + C(\xi) \left( \|\nabla \rho^{\frac{1}{2}}\|_{L^2(\Omega)} + \|\rho^{\frac{1}{2}}\|_{L^2(\Omega)} \right) \\
				&= C(\xi) \left( 1 + \|\nabla \rho^{\frac{1}{2}}\|_{L^2(\Omega)} \right).
			\end{aligned}
		\end{equation}
	\end{proof}
	\begin{lemma}\label{lemma 3D}
		In the three-dimensional radially spherically symmetric domain $B_R$, for any $0 < \xi \ll 1$, we have
		\begin{equation} \label{5.14}
			\mathop {\mathrm{sup}} \limits_{0\le t\le T}\| \rho ^{\frac{1}{2}}r^{\frac{1}{2}+\xi}\| _{L^{\infty}(0,R)}\le  C(\xi) \left( 1 + \|\nabla \rho^{\frac{1}{2}}\|_{L^2(\Omega)} \right).
		\end{equation}
		where $C(\xi)$ is a positive constant independent of $T$ and depending on $R$ and $\| \rho \|_{L^1(\Omega)}$.
	\end{lemma}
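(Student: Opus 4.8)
The plan is to follow the template of the proof of Lemma \ref{lemma 2D}, replacing the two-dimensional volume element $r\,dr$ by the three-dimensional one $r^2\,dr$, and the embedding $H^1(\Omega)\hookrightarrow L^p(\Omega)$ (valid for every finite $p$ in 2D) by the critical Sobolev embedding $H^1(\Omega)\hookrightarrow L^6(\Omega)$. First I would apply the one-dimensional Sobolev inequality $\|f\|_{L^\infty(0,R)}\le C\|f\|_{W^{1,1}(0,R)}$ to the scalar function $f(r)=\rho^{\frac12}(r,t)\,r^{\frac12+\xi}$; using $|f'|\le|\partial_r\rho^{\frac12}|\,r^{\frac12+\xi}+C\rho^{\frac12}r^{-\frac12+\xi}$ this gives
\[
\|\rho^{\frac12}r^{\frac12+\xi}\|_{L^\infty(0,R)}\le C\int_0^R\rho^{\frac12}r^{\frac12+\xi}\,dr+C\int_0^R|\partial_r\rho^{\frac12}|\,r^{\frac12+\xi}\,dr+C\int_0^R\rho^{\frac12}r^{-\frac12+\xi}\,dr.
\]
Then, using $\int_0^R g(r)\,dr\le C\int_\Omega g(r)\,r^{-2}\,dx$ for radial $g$, the three terms on the right are bounded by constants times $\int_\Omega\rho^{\frac12}r^{-\frac32+\xi}\,dx$, $\int_\Omega|\nabla\rho^{\frac12}|\,r^{-\frac32+\xi}\,dx$ and $\int_\Omega\rho^{\frac12}r^{-\frac52+\xi}\,dx$ respectively.

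Next I would estimate these three weighted integrals by $\|\nabla\rho^{\frac12}\|_{L^2(\Omega)}$ and the conserved mass $\|\rho\|_{L^1(\Omega)}$. For the middle integral, Cauchy--Schwarz gives the bound $C\|\nabla\rho^{\frac12}\|_{L^2(\Omega)}\,\|r^{-\frac32+\xi}\|_{L^2(\Omega)}$, and $\|r^{-\frac32+\xi}\|_{L^2(\Omega)}^2=C\int_0^R r^{-1+2\xi}\,dr<\infty$ precisely because $\xi>0$; this is exactly why the weight exponent must exceed $\frac12$, the three-dimensional counterpart of the bare $r^\xi$ weight used in Lemma \ref{lemma 2D}. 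The first integral is treated identically, $\le C\|\rho^{\frac12}\|_{L^2(\Omega)}\,\|r^{-\frac32+\xi}\|_{L^2(\Omega)}=C(\xi)\|\rho\|_{L^1(\Omega)}^{1/2}$. The third integral has the stronger singularity $r^{-\frac52+\xi}$, so I would pair it by Hölder with exponents $(6,\tfrac65)$: $\le C\|\rho^{\frac12}\|_{L^6(\Omega)}\,\|r^{-\frac52+\xi}\|_{L^{6/5}(\Omega)}$, where $\|r^{-\frac52+\xi}\|_{L^{6/5}(\Omega)}^{6/5}=C\int_0^R r^{-1+\frac{6\xi}{5}}\,dr<\infty$ for $\xi>0$, and then invoke $H^1(\Omega)\hookrightarrow L^6(\Omega)$ to get $\|\rho^{\frac12}\|_{L^6(\Omega)}\le C(\|\nabla\rho^{\frac12}\|_{L^2(\Omega)}+\|\rho^{\frac12}\|_{L^2(\Omega)})=C(\|\nabla\rho^{\frac12}\|_{L^2(\Omega)}+\|\rho\|_{L^1(\Omega)}^{1/2})$. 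Since $\|\rho(t)\|_{L^1(\Omega)}=\|\rho_0\|_{L^1(\Omega)}$ is a fixed constant by conservation of mass (using $\rho\mathbf{u}|_{\partial\Omega}=0$), summing the three bounds yields $\|\rho^{\frac12}r^{\frac12+\xi}\|_{L^\infty(0,R)}\le C(\xi)(1+\|\nabla\rho^{\frac12}\|_{L^2(\Omega)})$ uniformly in $t\in[0,T]$, with $C(\xi)$ depending only on $R$, $\xi$ and $\|\rho\|_{L^1(\Omega)}$.

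The argument is essentially routine once the weight $r^{\frac12+\xi}$ is identified; I expect the only point genuinely needing care — and the place where the three-dimensional case departs from the two-dimensional one — to be the control of $\int_\Omega\rho^{\frac12}r^{-\frac52+\xi}\,dx$. In 2D one has $\rho^{\frac12}\in L^p$ for every finite $p$, so the weight may be taken as singular as one wishes; in 3D one is restricted to $\rho^{\frac12}\in L^6$, and one must check that the Hölder pairing with $r^{-\frac52+\xi}$ still closes. It does, but only because $\big(-\tfrac52+\xi\big)\tfrac65+2=-1+\tfrac{6\xi}{5}>-1$ for $\xi>0$. No other obstacle is anticipated; in particular all constants are independent of $T$, as required.
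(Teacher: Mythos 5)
Your proposal is correct and follows essentially the same route as the paper: the one‑dimensional Sobolev embedding $W^{1,1}(0,R)\hookrightarrow L^\infty(0,R)$ applied to $\rho^{1/2}r^{1/2+\xi}$, conversion of the three radial integrals to volume integrals, Cauchy--Schwarz for the gradient term against $r^{-3/2+\xi}\in L^2(\Omega)$, and a Hölder pairing with exponents $(6,6/5)$ together with $H^1(\Omega)\hookrightarrow L^6(\Omega)$ for the most singular term $\int_\Omega\rho^{1/2}r^{-5/2+\xi}\,dx$. The only cosmetic difference is that you bound the least singular term $\int_\Omega\rho^{1/2}r^{-3/2+\xi}\,dx$ explicitly by Cauchy--Schwarz, whereas the paper silently absorbs it into the third term (since $r^{-3/2+\xi}\le R\,r^{-5/2+\xi}$ on $(0,R)$); both are fine.
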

	\begin{proof}
		Using Sobolev's embedding inequality in one-dimension and Cauchy-Schwarz inequality,
		\begin{equation}\label{5.58}
			\begin{aligned}
				\| \rho ^{\frac{1}{2}}r^{\frac{1}{2}+\xi}\| _{L^{\infty}(0,R)}&\le C\int_0^R{\rho ^{\frac{1}{2}}}r^{\frac{1}{2}+\xi}\,dr+C\int_0^R{|}\partial _r\rho ^{\frac{1}{2}}|r^{\frac{1}{2}+\xi}\,dr+C\int_0^R{\rho ^{\frac{1}{2}}}r^{\xi -\frac{1}{2}}\,dr\\
				&\le C\int_{\Omega}{\rho ^{\frac{1}{2}}}r^{\xi -\frac{3}{2}}\,dx+C\int_{\Omega}{|}\nabla \rho ^{\frac{1}{2}}|r^{\xi -\frac{3}{2}}\,dx+C\int_{\Omega}{\rho ^{\frac{1}{2}}}r^{\xi -\frac{5}{2}}\,dx\\
				&\le C\left( \int_{\Omega}{|}\nabla \rho ^{\frac{1}{2}}|^2\,dx \right) ^{\frac{1}{2}}\left( \int_{\Omega}{r^{2\xi -3}}\,dx \right) ^{\frac{1}{2}}\\
				&\quad +C\left( \int_{\Omega}{\rho ^3}\,dx \right) ^{\frac{1}{6}}\left( \int_{\Omega}{r^{\frac{6}{5}\xi -3}}\,dx \right) ^{\frac{5}{6}}\\
				&\le C(\xi )\left( \int_{\Omega}{|}\nabla \rho ^{\frac{1}{2}}|^2\,dx \right) ^{\frac{1}{2}}+\left( \left( \int_{\Omega}{|}\nabla \rho ^{\frac{1}{2}}|^2\,dx \right) ^{\frac{1}{2}}+\| \rho ^{\frac{1}{2}} \| _{L^2(\Omega)} \right) \left( \int_{\Omega}{r^{\frac{6}{5}\xi -3}}\,dx \right) ^{\frac{5}{6}}\\
				&\le C(\xi )(\| \nabla \rho ^{\frac{1}{2}}\| _{L^2(\Omega)}+1).
			\end{aligned}
		\end{equation}
	\end{proof}
	
	\section{A priori estimates: Lower-Order Estimates for $N=2$}
	In this section, we establish lower-order estimates for the two-dimensional non-isentropic viscous shallow water equations. We begin by proving the BD entropy inequality.
	\subsection{$L^2$ estimates for velocity and effective velocity}
	\begin{proposition} \label{prop3.1}
		There exists a constant $C(T)> 0$, dependent of $T$, such that
		\begin{equation} \label{10}
			\sup_{0 \le \tau \le T} \int_0^1 (u^2 +\rho ^{\gamma -1}) \, dy + \int_0^T \int_0^1 \left( \frac{u^2}{r^2} + \rho^2 (\partial_y u)^2 r^2 \right) \, dy d\tau \le C,
		\end{equation}
		\begin{equation} \label{11}
			\sup_{0 \le \tau \le T} \int_0^1 (u + 2r\partial_y \rho)^2 \, dy + \int_0^T \int_0^1 \rho^{\gamma-1} (r\partial_y \rho)^2 \, dy d\tau \le C(T).
		\end{equation}
	\end{proposition}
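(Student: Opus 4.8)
The plan is to prove \eqref{10} first and then close \eqref{11} by a Gronwall argument; throughout I specialise to $N=2$, so $r^{N-1}=r$, $r^{2N-2}=r^{2}$, and I use that $\partial_{\tau}s=0$ freezes $s(y,\tau)=s_{0}(y)\in[\underline{s},\bar{s}]$, whence $e^{s}$ is bounded above and below. For \eqref{10} I would multiply the momentum equation in \eqref{4.4} by $ru$ and integrate over $y\in(0,1)$. The $\partial_{\tau}u$ term gives $\tfrac12\tfrac{d}{d\tau}\int_{0}^{1}u^{2}\,dy$; for the pressure, $\partial_{\tau}s=0$ together with $\partial_{\tau}\rho=-\rho^{2}\partial_{y}(ru)$ yields $\int_{0}^{1}ru\,\partial_{y}P\,dy=\tfrac{d}{d\tau}\int_{0}^{1}\tfrac{\rho^{\gamma-1}}{\gamma-1}e^{s}\,dy$; and the viscous term, after one integration by parts, becomes $2\int_{0}^{1}\rho^{2}(\partial_{y}(ru))^{2}\,dy$ with $\partial_{y}(ru)=r\partial_{y}u+\tfrac{u}{\rho r}$ (using $\partial_{y}r=(\rho r)^{-1}$), whose cross term combines with the zeroth-order term $\tfrac{2}{r}u\,\partial_{y}\rho$ of the equation into the perfect derivative $2\,\partial_{y}(\rho u^{2})$, integrating to zero by $u(0,\tau)=0$ (regularity of $u\mathbf{x}/r$ at the origin) and $u|_{\partial\Omega}=0$. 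This produces the energy equality
\[
\frac{d}{d\tau}\!\left[\frac12\!\int_{0}^{1}\!u^{2}\,dy+\!\int_{0}^{1}\!\frac{\rho^{\gamma-1}}{\gamma-1}e^{s}\,dy\right]+2\!\int_{0}^{1}\!\rho^{2}r^{2}(\partial_{y}u)^{2}\,dy+2\!\int_{0}^{1}\!\frac{u^{2}}{r^{2}}\,dy=0,
\]
which, integrated in $\tau$ and combined with $0<\underline{\rho}\le\rho_{0}\le\bar{\rho}$, $0<\underline{s}\le s_{0}\le\bar{s}$, gives \eqref{10} with a constant independent of $T$ (and $\int_{0}^{1}\rho^{\gamma-1}\,dy\sim\int_{0}^{1}\rho^{\gamma-1}e^{s}\,dy$).

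For \eqref{11} I would test the effective-velocity equation $\partial_{\tau}w+r\partial_{y}P=0$ from \eqref{4.6} against $w$. Writing $w=u+2r\partial_{y}\rho$, the $u$-contribution of the pressure term is again $\tfrac{d}{d\tau}\int_{0}^{1}\tfrac{\rho^{\gamma-1}}{\gamma-1}e^{s}\,dy$, while the $2r\partial_{y}\rho$-contribution, after inserting $\partial_{y}P=\gamma\rho^{\gamma-1}e^{s}\partial_{y}\rho+\rho^{\gamma}e^{s}\partial_{y}s$, produces the dissipation $2\gamma\int_{0}^{1}r^{2}\rho^{\gamma-1}e^{s}(\partial_{y}\rho)^{2}\,dy$ and the source $2\int_{0}^{1}r^{2}\rho^{\gamma}e^{s}(\partial_{y}s)(\partial_{y}\rho)\,dy$ — precisely the identity displayed in the Introduction. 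By Young's inequality, pairing $\sqrt{\gamma}\,r\rho^{(\gamma-1)/2}e^{s/2}\partial_{y}\rho$ with $\gamma^{-1/2}r\rho^{(\gamma+1)/2}e^{s/2}\partial_{y}s$, half the dissipation absorbs the source up to $C\int_{0}^{1}r^{2}\rho^{\gamma+1}(\partial_{y}s)^{2}\,dy$. Since $\partial_{\tau}s=0$ implies $\partial_{\tau}(\partial_{y}s)=0$, one has $\partial_{y}s(\cdot,\tau)=\partial_{y}s_{0}=\tfrac{\partial_{r}s_{0}}{\rho_{0}r}\in L^{\infty}$, so $\|\partial_{y}s(\tau)\|_{L^{\infty}}\le C$ uniformly in $\tau$; hence $\int_{0}^{1}r^{2}\rho^{\gamma+1}(\partial_{y}s)^{2}\,dy\le C\int_{0}^{1}(\rho r)^{2}\rho^{\gamma-1}\,dy\le C\|\rho r\|_{L^{\infty}(0,R)}^{2}$, using $\sup_{\tau}\int_{0}^{1}\rho^{\gamma-1}\,dy\le C$ from \eqref{10}.

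It remains to control $\|\rho r\|_{L^{\infty}(0,R)}^{2}$ linearly by the energy. Since $\rho r$ vanishes at $y=0$, the one-dimensional Sobolev inequality and $\partial_{y}r=(\rho r)^{-1}$ give $\|\rho r\|_{L^{\infty}(0,1)}\le\|\partial_{y}(\rho r)\|_{L^{1}(0,1)}\le\|r\partial_{y}\rho\|_{L^{1}(0,1)}+\|r^{-1}\|_{L^{1}(0,1)}$. For the first term, $w-u=2r\partial_{y}\rho$ yields $\|r\partial_{y}\rho\|_{L^{1}}\le\|r\partial_{y}\rho\|_{L^{2}}=\tfrac12\|w-u\|_{L^{2}}\lesssim\|w\|_{L^{2}}+\|u\|_{L^{2}}$. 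For the second term, I would use the Cauchy–Schwarz pairing that exploits $\int_{0}^{1}(\rho r)^{-1}\,dy=R$ together with Lemma \ref{lemma 2D}, giving $\|r^{-1}\|_{L^{1}(0,1)}^{2}\lesssim\|\rho^{1/2}r^{\xi}\|_{L^{\infty}(0,R)}\lesssim 1+\|\nabla\rho^{1/2}\|_{L^{2}(\Omega)}$; since $\|\nabla\rho^{1/2}\|_{L^{2}(\Omega)}^{2}\sim\int_{0}^{1}(r\partial_{y}\rho)^{2}\,dy=\tfrac14\|w-u\|_{L^{2}}^{2}\lesssim\|w\|_{L^{2}}^{2}+\|u\|_{L^{2}}^{2}$, this is $\lesssim 1+\|w\|_{L^{2}}^{2}+\|u\|_{L^{2}}^{2}$. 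Collecting, $\|\rho r\|_{L^{\infty}}^{2}\lesssim 1+\|w\|_{L^{2}}^{2}+\|u\|_{L^{2}}^{2}$, and using \eqref{10} to bound $\|u\|_{L^{2}}^{2}\le C$, the BD identity becomes
\[
\frac{d}{d\tau}\!\left[\frac12\|w\|_{L^{2}}^{2}+\!\int_{0}^{1}\!\frac{\rho^{\gamma-1}}{\gamma-1}e^{s}\,dy\right]+\gamma\!\int_{0}^{1}\!r^{2}\rho^{\gamma-1}e^{s}(\partial_{y}\rho)^{2}\,dy\le C\big(1+\|w\|_{L^{2}}^{2}\big).
\]
Gronwall's inequality gives $\sup_{[0,T]}\|w\|_{L^{2}}^{2}\le C(T)$, and integrating the dissipation term in $\tau$ (using $e^{s}\ge e^{\underline{s}}$) yields the remaining bound in \eqref{11}.

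The step I expect to be the main obstacle is exactly the last one: obtaining a \emph{linear} (in the energy $\|w\|_{L^{2}}^{2}$) bound for the source term, equivalently for $\|\rho r\|_{L^{\infty}}^{2}$. A careless estimate produces superlinear powers of $\|w\|_{L^{2}}^{2}$, and then Gronwall only gives local-in-time control rather than $C(T)$; the gain rests on the precise algebra $w-u=2r\partial_{y}\rho$, the exact identity $\int_{0}^{1}(\rho r)^{-1}\,dy=R$, and the sharp weighted near-origin embedding of Lemma \ref{lemma 2D}, which together keep the right-hand side at the level of $1+\|\nabla\rho^{1/2}\|_{L^{2}(\Omega)}^{2}$. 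The $\rho^{\gamma-1}$-weight carried by the dissipation plays no essential role here beyond its positivity for $\gamma>1$, although it will matter for the later density bounds.
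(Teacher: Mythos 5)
Your proposal follows the paper's proof almost exactly: the basic energy identity for \eqref{10}; testing $\partial_\tau w+r\partial_yP=0$ against $w$ to get the BD identity with dissipation $2\gamma\int_0^1 r^2\rho^{\gamma-1}e^s(\partial_y\rho)^2\,dy$ and source $-2\int_0^1 r^2\rho^\gamma e^s(\partial_ys)(\partial_y\rho)\,dy$; absorbing half the dissipation via Young; exploiting $\partial_\tau(\partial_ys)=0$ so $\partial_ys$ stays in $L^\infty$; reducing the residual to $\|\rho r\|_{L^\infty}^2$; bounding $\|\rho r\|_{L^\infty}\le\|\rho_y r\|_{L^1}+\|r^{-1}\|_{L^1}$ and using $w-u=2r\partial_y\rho$ plus Lemma~\ref{lemma 2D}; and closing with Gronwall. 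That is exactly the paper's Proposition~\ref{prop3.1}.

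One step is misdescribed, though. For $\|r^{-1}\|_{L^1(0,1)}$ you propose a Cauchy--Schwarz pairing built on $\int_0^1(\rho r)^{-1}\,dy=R$. If you pair $(\rho r)^{-1/2}$ against $\rho^{1/2}r^{-1/2}$ this gives $\|r^{-1}\|_{L^1}^2\le R\int_0^1\rho r^{-1}\,dy=R\int_0^R\rho^2\,dr$, and $\int_0^R\rho^2\,dr$ is not controlled by the quantities at hand; you would then still need a weighted decomposition to close it. The paper avoids the detour entirely: since $dy=\rho r\,dr$, one has the identity $\|r^{-1}\|_{L^1(0,1)}=\int_0^R\rho\,dr$ with no Cauchy--Schwarz, and then writes $\int_0^R\rho\,dr=\int_0^R\rho^{1/2}r^{\xi}\cdot\rho^{1/2}r^{-\xi}\,dr\le\|\rho^{1/2}r^\xi\|_{L^\infty}\int_0^R\rho^{1/2}r^{-\xi}\,dr$ and applies H\"older against $\bigl(\int_0^R\rho^\gamma r\,dr\bigr)^{1/(2\gamma)}$ — the latter bounded uniformly in time by \eqref{10} — leaving only an integrable power of $r$ for $\xi<1-1/\gamma$. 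With that correction your argument matches the paper's; the rest (the linearity in $\|w\|_{L^2}^2+\|u\|_{L^2}^2$ needed for Gronwall, which you rightly flag as the crux) is handled the same way.
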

	
	\begin{proof}
		Multiplying the second equation of \eqref{4.4} by $ru$, we get basic energy estimate
		\begin{equation}\label{5.8}
			\frac{d}{d\tau} \int_0^1 \left( \frac{1}{2}u^2 + \frac{\rho^{\gamma -1}}{\gamma -1}e^s \right) dy + \int_0^1 \left( 2\rho^2 r^2 (\partial_y u)^2 + \frac{2u^2}{r^2} \right) dy = 0.
		\end{equation}
		Therefore, we prove \eqref{10}.
		\par
		Now multiplying the third equation of \eqref{4.6} by $w=u+2r\partial_y \rho$, we get
		$$
		\begin{aligned}
			\frac{1}{2}\frac{d}{d\tau}\int_0^1{w^2}\,dy&=-\int_0^1{\partial _yP}(ru)\,dy-2\int_0^1{\partial _yP}(r^2\partial _y\rho )\,dy
			\\
			&=\int_0^1{P}\partial _y(ru)\,dy-2\int_0^1{\partial _yP}\left( r^2\partial _y\rho \right) \,dy\triangleq H_1+H_2.
		\end{aligned}
		$$
		Using \eqref{4.4}$_1$, $H_1$ is equivalent to
		$$H_1 = \int_0^1 P \left( -\frac{\partial_\tau \rho}{\rho^2} \right) dy \\= -\frac{d}{d\tau} \int_0^1 \frac{\rho^{\gamma-1}}{\gamma-1} e^{s} \, dy,$$
		and
		$$
		\begin{aligned}
			H_2 &= - \int_0^1 \left[ \gamma \rho^{\gamma-1} e^s (\partial_y \rho) + \rho^\gamma e^s (\partial_y s) \right] \cdot (2r^2 \partial_y \rho) \, dy \\
			&= {- \int_0^1 2\gamma r^2 \rho^{\gamma-1} e^s (\partial_y \rho)^2 \, dy} - \int_0^1{ 2 r^2 \rho^\gamma e^s (\partial_y s) (\partial_y \rho) \, dy}.
		\end{aligned}$$
		So we arrive at 
		\begin{equation}\label{55.6}
			\frac{d}{d\tau} \left( \int_0^1 \frac{1}{2}w^2 \, dy + \int_0^1 \frac{\rho^{\gamma-1}}{\gamma-1} e^{s} \, dy \right) + \int_0^1 2\gamma r^2 \rho^{\gamma-1} e^s (\partial_y \rho)^2 \, dy = - \int_0^1 2 r^2 \rho^\gamma e^s (\partial_y s) (\partial_y \rho) \, dy.
		\end{equation}
		Using Young's inequality, the source term on the RHS can be estimated as follows$$\begin{aligned}
			- \int_0^1 2 r^2 \rho^\gamma e^s (\partial_y s)(\partial_y \rho) \, dy 
			&\le \int_0^1 \gamma r^2 \rho^{\gamma-1} e^s (\partial_y \rho)^2 \, dy + C \int_0^1 r^2 \rho^{\gamma+1} e^s (\partial_y s)^2 \, dy \\
			&\le \int_0^1 \gamma r^2 \rho^{\gamma-1} e^s (\partial_y \rho)^2 \, dy + C \|\rho r\|_{L^\infty}^2 \int_0^1 \rho^{\gamma-1} \, dy.
		\end{aligned}$$
		Here we use $e^{\underline{s}} \le e^s\le e^{\bar{s}}\le C,\,\,\left\| \partial _ys \right\| _{L^{\infty}}\le \left\| \partial _ys_0 \right\| _{L^{\infty}}\le C$. Substituting this back into \eqref{55.6}, and absorbing the first term into the dissipation on the LHS, we obtain
		\begin{equation}\label{5.12}
			\frac{d}{d\tau} \left( \int_0^1 \frac{1}{2}w^2 \, dy + \int_0^1 \frac{\rho^{\gamma-1}}{\gamma-1} e^s\, dy \right)+ \int_0^1 \gamma r^2 \rho^{\gamma-1} e^s (\partial_y \rho)^2 \, dy \le C \|\rho r\|_{L^\infty}^2,
		\end{equation}
		The key step in the following process is to estimate $\|\rho r\|_{L^\infty}$. By the fundamental theorem of calculus, we have
		\begin{equation}
			\left( \rho r \right) \left( y \right) =\left( \rho r \right) \left( y \right) -\left( \rho r \right) \left( 0 \right) \le \int_0^1{\left| \left( \rho r \right) _y \right|dy}\le \int_0^1{\left| \rho _yr \right|+\left| r_y\rho \right|dy},
		\end{equation}
		so we derive
		$$\begin{aligned}
			\|\rho r\| _{L^{\infty}}^{2}&\le C\left( \| \rho _yr\| _{L^1(0,1)}+\left\| \rho r_y \right\| _{L^1(0,1)} \right) ^2\\
			&=C\left( \| \rho _yr\| _{L^1(0,1)}^{2}+\| \frac{1}{r} \| _{L^1(0,1)}^{2} \right),
		\end{aligned}$$
		where we used the relation $\partial_y r = \frac{1}{\rho r}$. Now we estimate each term on the RHS. First, by Hölder's inequality, we get
		\begin{equation}
			\|\rho_y r\|_{L^1}^2 = \left( \int_0^1 |\rho_y r| \, dy \right)^2 \le \left( \int_0^1 |\rho_y r|^2 \, dy \right) \left( \int_0^1 1 \, dy \right) = \|\rho_y r\|_{L^2}^2.
		\end{equation}
		Second, for the term $\| \frac{1}{r}\|_{L^1}$, using the coordinate transformation $dy = \rho r dr$, we have
		\begin{equation}\label{33.88}
			\begin{aligned}
				\| \frac{1}{r}\| _{L^1}^{2}&=\left( \int{\frac{1}{r}}\,dy \right) ^2=\left( \int{\rho}\,dr \right) ^2\\
				&=\left( \int{\rho ^{\frac{1}{2}}}r^{\xi}\cdot \rho ^{\frac{1}{2}}r^{-\xi}\,dr \right) ^2\\
				&\le \| \rho ^{\frac{1}{2}}r^{\xi}\| _{L^{\infty}(0,R)}^{2}\left( \int{\rho ^{\frac{1}{2}}}r^{\frac{1}{2\gamma}}r^{-\frac{1}{2\gamma}-\xi}\,dr \right) ^2\\
				&\le \| \rho ^{\frac{1}{2}}r^{\xi}\| _{L^{\infty}(0,R)}^{2}(\int{\rho ^{\gamma}}rdr)^{\frac{1}{\gamma}}(\int{r^{-\frac{1}{2\gamma -1}-\frac{2\gamma}{2\gamma -1}\xi}}dr)^{\frac{2\gamma-1}{\gamma}}\\
				&\le C(\xi )\| \rho ^{\frac{1}{2}}r^{\xi}\| _{L^{\infty}(0,R)}^{2},\\
			\end{aligned}    
		\end{equation}
		where $\xi$ is small enough s.t. 
		$$\xi <1-\frac{1}{\gamma}.$$
		By Lemma \ref{lemma 2D}, in two-dimensional space, the weighted estimate at the origin is
		$$
		\| \rho ^{\frac{1}{2}}r^{\xi}\| _{L^{\infty}(0,R)}\le C(\xi )(\| \nabla \rho ^{\frac{1}{2}}\| _{L^2}+1)\\=C(\xi )\left( 1+\| \rho _yr\|_{L^2(0,1)} \right) .\\
		$$
		Combining \eqref{5.12}-\eqref{33.88}, we arrive at 
		\begin{equation}\label{5.14}
			\frac{d}{d\tau}\left( \int_0^1{\frac{1}{2}}w^2\,dy+\int_0^1{\frac{\rho^{\gamma -1}}{\gamma -1}e^s}\,dy \right) +\int_0^1{2}\gamma r^2\rho ^{\gamma -1}e^s(\partial _y\rho )^2\,dy
			\\
			\le C(\xi )\left( 1+\| \rho _yr\| _{L^2(0,1)}^{2} \right) .
		\end{equation}
		Combining \eqref{5.8} and \eqref{5.14}, we obtain
		\begin{equation}
			\begin{split}
				\frac{d}{d\tau}&\int_0^1{\left( \frac{1}{2}u^2+\frac{1}{2}w^2+\frac{\rho^{\gamma -1}}{\gamma -1}e^{s} \right)}dy+\int_0^1{\left( 2\rho ^2r^2(\partial _yu)^2+\frac{2u^2}{r^2} \right)+{2}\gamma r^2\rho ^{\gamma -1}e^s(\partial _y\rho )^2}dy
				\\
				&\le C(\xi )\left( 1+\| \rho _yr\| _{L^2(0,1)}^{2} \right) .
				\\
				&\le C(\xi )\left( 1+\left\| u \right\| _{L^2(0,1)}^{2}+\left\| w \right\| _{L^2(0,1)}^{2} \right).
			\end{split}
		\end{equation}
		Let the total energy functional be defined as
		\begin{equation}
			E(\tau) = \int_0^1 \left( \frac{1}{2}u^2 + \frac{1}{2}w^2 + \frac{\rho^{\gamma -1}}{\gamma -1}e^{\gamma} \right) dy.
		\end{equation}
		Using the definition of the effective velocity $w - u = 2r \partial_y \rho$, we have the relation
		\begin{equation}
			\|\rho_y r\|_{L^2(0,1)}^2 = \| \frac{w-u}{2} \|_{L^2(0,1)}^2 \le \frac{1}{2} \left( \|u\|_{L^2(0,1)}^2 + \|w\|_{L^2(0,1)}^2 \right) \le E(\tau).
		\end{equation}
		Substituting this into the derived energy estimate, we obtain the following differential inequality
		\begin{equation}
			\begin{aligned}
				\frac{d}{d\tau} E(\tau) 
				&\le C(\xi) \left( 1 + \|\rho_y r\|_{L^2}^2 \right) \\
				&\le C(\xi) \left( 1 + \|u\|_{L^2}^2 + \|w\|_{L^2}^2 \right) \\
				&\le C(\xi) \left( 1 + 2E(\tau) \right).
			\end{aligned}
		\end{equation}
		Applying Gronwall's inequality yields the uniform bound for the energy
		\begin{equation}
			E(t) \le \left( E(0) + C(\xi)t \right) e^{2C(\xi)t}, \quad \text{for any } t \in [0, T].
		\end{equation}
		This implies \eqref{11}. Consequently, we also obtain
		\begin{equation}\label{5.20}
			\|\rho^{\frac{1}{2}}r^{\xi}\|_{L^{\infty}(0,R)} \le C(T),
		\end{equation}
		and 
		\begin{equation}
			\|\rho_y r\|_{L^2(0,1)} \leq C(T).
		\end{equation}
	\end{proof}
	\subsection{$L^4$ estimates for velocity and effective velocity}
	The following property is used to establish the boundedness of $\| \nabla \rho^{1/4} \|_{L^4(\Omega)}$, which is crucial for proving the upper bound of the density.
	\begin{proposition} \label{6.1}
		There exists a constant $C > 0$, dependent of $T$, such that
		\begin{equation} \label{eq:13}
			\sup_{0 \le \tau \le T} \int_0^1 u^4 \, dy + \int_0^T \int_0^1 \left( \frac{u^4}{r^2} + \rho^2r^2 (\partial_y u)^2 u^2\right) \, dy d\tau \le C,
		\end{equation}
		\begin{equation} \label{6.2}
			\sup_{0 \le \tau \le T} \int_0^1 (u + 2r\partial_y \rho)^4 \, dy + \int_0^T \int_0^1 \rho^{\gamma-1} (r\partial_y \rho)^4 \, dy d\tau \le C.
		\end{equation}
	\end{proposition}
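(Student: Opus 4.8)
We plan to derive \eqref{eq:13} and \eqref{6.2} by running the $L^4$-analogues of the two energy identities of Proposition~\ref{prop3.1} and then coupling them through Gronwall's inequality; the new feature, compared with the $L^2$ case, is that no pointwise bound on $\rho$ is yet available, so the pressure and entropy terms must be absorbed using only the weighted estimate \eqref{5.20} and the degenerate dissipations already at hand. First I would multiply the momentum equation \eqref{4.4}$_3$ (with $N=2$, $r^{N-1}=r$) by $ru^{3}$ and integrate over $(0,1)$: the time term gives $\frac14\frac{d}{d\tau}\int_0^1 u^4\,dy$, and after integrating the viscous term by parts (boundary terms vanish since $u(0,\tau)=u(1,\tau)=0$) and using $\partial_y r=\frac1{\rho r}$ one computes
$$2\rho^2\partial_y(ru)\,\partial_y(ru^{3})=6\rho^2 r^2 u^2(\partial_y u)^2+8\rho u^{3}\partial_y u+\frac{2u^4}{r^2},$$
with $\int_0^1 8\rho u^{3}\partial_y u\,dy=-2\int_0^1 u^4\partial_y\rho\,dy$ cancelling exactly the contribution of the geometric term $\frac2r u\partial_y\rho$. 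This yields
$$\frac14\frac{d}{d\tau}\int_0^1 u^4\,dy+\int_0^1\!\Big(6\rho^2 r^2 u^2(\partial_y u)^2+\frac{2u^4}{r^2}\Big)dy=\int_0^1 P\,\partial_y(ru^{3})\,dy=\int_0^1\!\Big(3\rho^\gamma e^s r u^2\partial_y u+\rho^{\gamma-1}e^s\frac{u^{3}}{r}\Big)dy.$$

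Next I would multiply the effective-velocity equation \eqref{4.6}$_3$ by $w^{3}$, integrate, and substitute $\partial_y\rho=\frac{w-u}{2r}$ into $\partial_y P=\gamma\rho^{\gamma-1}e^s\partial_y\rho+\rho^\gamma e^s\partial_y s$ to obtain
$$\frac14\frac{d}{d\tau}\int_0^1 w^4\,dy+\frac\gamma2\int_0^1\rho^{\gamma-1}e^s w^4\,dy=\frac\gamma2\int_0^1\rho^{\gamma-1}e^s w^{3}u\,dy-\int_0^1\rho^\gamma e^s r w^{3}\partial_y s\,dy.$$
Since $r\partial_y\rho=\frac{w-u}2$, the dissipation $\int_0^1\rho^{\gamma-1}w^4\,dy$ produced here will, together with the $u^4$ bound, control $\int_0^1\rho^{\gamma-1}(r\partial_y\rho)^4\,dy$.

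For the right-hand sides I would use $e^{\underline s}\le e^s\le e^{\bar s}$ and $|\partial_y s|\le\|\partial_y s_0\|_{L^\infty}$ (valid since $s_{y\tau}=0$), the weighted bound $\rho^{1/2}r^{\xi}\le C(T)$ from \eqref{5.20} — hence $\rho\le C(T)r^{-2\xi}$ and $\|\rho r\|_{L^\infty(0,R)}\le C(T)$ — together with the dissipations $\int_0^1(\frac{u^2}{r^2}+\rho^2 r^2(\partial_y u)^2)\,dy$ and $\int_0^1\rho^{\gamma-1}\,dy$, all integrable in $\tau$ over $(0,T)$ by Proposition~\ref{prop3.1}. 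By Young's inequality each cubic/quartic term splits into an $\epsilon$-multiple of one of the dissipation integrals above (absorbed on the left) plus a remainder of the form $\rho^a u^4$, $\rho^a u^2$ or $\rho^{\gamma-1}(1+r^4)$; in each such remainder the surplus power of $\rho$ is dominated by $C(T)r^{-\delta}$ with $\delta=\delta(\gamma,\xi)$ as small as desired for $\xi$ small, and an interpolation of the type $\int_0^1 r^{-\delta}u^4\,dy\le\epsilon\int_0^1\frac{u^4}{r^2}\,dy+C(T,\epsilon)\int_0^1 u^4\,dy$ (and similarly with $u^2$) reduces everything to the dissipations, to $\int_0^1 u^4\,dy$, and to an $L^1(0,T)$ forcing. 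Adding the two identities, absorbing the small dissipative contributions, and setting $\phi(\tau)=\int_0^1(u^4+w^4)\,dy$, I expect $\phi'+c\int_0^1(\frac{u^4}{r^2}+\rho^2 r^2 u^2(\partial_y u)^2+\rho^{\gamma-1}w^4)\,dy\le C(T)(g(\tau)+\phi)$ with $g\in L^1(0,T)$ and $\phi(0)<\infty$; Gronwall then gives $\sup_{[0,T]}\phi\le C(T)$ and, on integrating in $\tau$, the dissipation bound in \eqref{eq:13}. For \eqref{6.2} I would finally estimate $\int_0^T\!\!\int_0^1\rho^{\gamma-1}(r\partial_y\rho)^4\,dy\,d\tau\lesssim\int_0^T\!\!\int_0^1\rho^{\gamma-1}(w^4+u^4)\,dy\,d\tau$, the $w^4$ part being the dissipation just obtained and the $u^4$ part handled by $\rho^{\gamma-1}\le C(T)r^{-2(\gamma-1)\xi}$, interpolation, and the bounds $\sup_\tau\int_0^1 u^4\,dy\le C$, $\int_0^T\!\!\int_0^1\frac{u^4}{r^2}\,dy\,d\tau\le C$ already in hand.

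The main obstacle is the absorption in the previous paragraph: unlike in Proposition~\ref{prop3.1}, the pressure and entropy-transport source terms carry genuine powers of $\rho$ (after Young's inequality one even meets $\rho^{\gamma+3}$), and no uniform upper bound for $\rho$ is yet available. The only leverage is the \emph{weighted} bound $\rho^{1/2}r^{\xi}\lesssim_{T} 1$ together with the degenerate dissipations $u^2/r^2$ and $u^4/r^2$; matching these requires $\xi$ small relative to $\gamma-1$, which is exactly why the scheme closes for every $\gamma>1$ in this two-dimensional setting (and, in the $N=3$ analogue, forces the restriction on $\gamma$).
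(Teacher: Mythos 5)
Your proposal is correct and follows essentially the same route as the paper: multiply the momentum equation by $ru^3$ and the effective-velocity equation by $w^3$; exploit the cancellation of $\int 8\rho u^3\partial_y u\,dy$ between the viscous expansion and the geometric source $\tfrac{2}{r}u\partial_y\rho$; substitute $r\partial_y\rho=\tfrac{w-u}{2}$ into $\partial_y P$ to manufacture the dissipation $\int\rho^{\gamma-1}w^4$; control all the loose powers of $\rho$ by the weighted bound $\|\rho^{1/2}r^\xi\|_{L^\infty}\le C(T)$ from \eqref{5.20} together with the degenerate dissipations and $|\partial_y s|\le\|\partial_y s_0\|_{L^\infty}$. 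The only real difference is organizational: you set $\phi(\tau)=\int_0^1(u^4+w^4)\,dy$ and run one coupled Gronwall, which forces you to introduce the $r^{-\delta}$-interpolation $\int r^{-\delta}u^4\,dy\le\epsilon\int u^4/r^2\,dy+C(\epsilon)\int u^4\,dy$ to keep the cross term $\int\rho^{\gamma-1}u^4\,dy$ from overwhelming the $u$-dissipation; the paper instead closes the $u^4$ estimate first (after absorption its right-hand side is $C(T)\int u^2/r^2\,dy$, already $L^1$ in time by Proposition \ref{prop3.1}, so no Gronwall is needed), and then treats the resulting dissipation $\int_0^T\int u^4/r^2\,dy\,d\tau\le C(T)$ as an $L^1$-in-time forcing for the decoupled $w^4$ estimate. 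Your version buys nothing over the paper's and costs one extra interpolation; the paper's version is cleaner but slightly less robust. Both are valid and hinge on the same ingredients.
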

	
	\begin{proof}
		Multiplying the momentum equation by $r u^3$ and integrating over the spatial domain $[0,1]$, we obtain the following equality
		\begin{equation}\label{5.21}
			\frac{1}{4}\frac{d}{d\tau}\int_0^1{u^4}\,dy=\int_0^1{\left( -\partial _yP+\partial _y\left( 2\rho ^2\partial _y(ru) \right) -\frac{2}{r}\partial _y\rho u \right)}ru^3\,dy\triangleq J_1+J_2+J_3.
		\end{equation}
		For the pressure term $J_1$, we integrate by parts and use $P = \rho^\gamma e^s$
		\begin{equation}\label{5.22}
			\begin{aligned}
				J_1 &= -\int_0^1 (\partial_y P) r u^3 \, dy = \int_0^1 P \partial_y(r u^3) \, dy \\
				&= \int_0^1 \rho^\gamma e^s \left( \frac{u^3}{\rho r} + 3 r u^2 \partial_y u \right) \, dy \\
				&=\int_0^1 \frac{\rho^{\gamma-1} e^s u^3}{r} \, dy+ \int_0^1 3 \rho^\gamma e^s r u^2 \partial_y u \, dy.
			\end{aligned}
		\end{equation}
		Using integration by parts and the relation $\partial_y r = \frac{1}{\rho r}$, the viscous term can be expanded as
		\begin{equation}\label{5.23}
			\begin{aligned}
				J_2 &=\int_0^1 \partial_y \left( 2\rho^2 \partial_y(ru) \right) r u^3 \, dy \\
				&= -\int_0^1 2\rho^2 \partial_y(ru) \partial_y(r u^3) \, dy \\
				&= -\int_0^1 2\rho^2 \left( \frac{u}{\rho r} + r \partial_y u \right) \left( \frac{u^3}{\rho r} + 3 r u^2 \partial_y u \right) \, dy \\
				&= -\int_0^1 \left( \frac{2 u^4}{r^2} + 8 \rho u^3 \partial_y u + 6 \rho^2 r^2 u^2 (\partial_y u)^2 \right) \, dy.
			\end{aligned}
		\end{equation}
		The source term, after integration by parts on $\partial_y \rho$, becomes
		\begin{equation}\label{5.24}
			\begin{aligned}
				J_3 &=- \int_0^1 \frac{2}{r} (\partial_y \rho) u \cdot r u^3 \, dy =- \int_0^1 2 u^4 \partial_y \rho \, dy \\
				&=  \int_0^1 \rho \partial_y (2 u^4) \, dy = \int_0^1 8 \rho u^3 \partial_y u \, dy.
			\end{aligned}
		\end{equation}
		Substituting \eqref{5.22}-\eqref{5.24} into \eqref{5.21}, we have
		$$\frac{1}{4} \frac{d}{d\tau} \|u\|_{L^4}^4 + \int_0^1 \left( \frac{2 u^4}{r^2} + 6 \rho^2 r^2 u^2 (\partial_y u)^2 \right) \, dy 
		= \int_0^1 \frac{\rho^{\gamma-1} e^s u^3}{r} \, dy + \int_0^1 3 \rho^\gamma e^s r u^2 \partial_y u \, dy.$$
		The first term on the RHS can be estimated as 
		\begin{equation}
			\int_0^1{\frac{\rho ^{\gamma -1}e^su^3}{r}}dy\le \frac{1}{2}\int_0^1{\frac{u^4}{r^2}dy}+C\int_0^1{\rho ^{2\gamma -2}u^2}dy.
		\end{equation}
		By applying Young's inequality to the last term on the RHS
		$$\left| \int_0^1 3 \rho^\gamma e^s r u^2 \partial_y u \, dy \right| \le \frac{1}{2} \int_0^1 6 \rho^2 r^2 u^2 (\partial_y u)^2 \, dy + C \int_0^1 \rho^{2\gamma-2} u^2 \, dy.$$
		Absorbing the dissipation term into the LHS, we finally obtain the desired $L^4$ 
		estimate
		\begin{equation}
			\begin{split}
				\frac{d}{d\tau}\| u \| _{L^4}^{4}+\int_0^1{\left( \frac{u^4}{r^2}+\rho ^2r^2u^2(\partial _yu)^2 \right)}\,dy &\le C\int_0^1{\rho ^{2\gamma -2}u^2}\,dy
				\\
				&\le C \| \rho ^{\frac{1}{2}}r^{\xi} \| _{L^{\infty}}^{4\gamma -4}\int_0^1{\frac{u^2}{r^2}}\,dy\le C\left( T \right),
			\end{split}
		\end{equation}
		where $\xi$ is small enough s.t.
		\begin{equation}
			\xi <\frac{1}{2\left( \gamma -1 \right)}.
		\end{equation}
		Next, multiplying $\eqref{4.4}_2$ with $(u + 2r\partial_y \rho)^3$ and integrating over the spatial domain $[0,1]$, we obtain
		\begin{equation}\label{33.225}
			\frac{1}{4} \frac{d}{d\tau} \int_0^1 w^4 \, dy + \int_0^1 r w^3 \partial_y P \, dy = 0.
		\end{equation}
		Since $\partial_y P = \gamma \rho^{\gamma-1} e^s \partial_y \rho + \rho^\gamma e^s \partial_y s$, we arrive at
		\begin{equation}
			\int_0^1 r w^3 \partial_y P \, dy = \int_0^1 \gamma \rho^{\gamma-1} e^s r w^3 (\partial_y \rho) \, dy + \int_0^1 \rho^\gamma e^s r w^3 (\partial_y s) \, dy.
		\end{equation}
		Substituting the relation $\partial_y \rho = \frac{w - u}{2r}$ into the first term, we get
		\begin{equation}\label{33.227}
			\begin{aligned}
				\int_0^1 \gamma \rho^{\gamma-1} e^s r w^3 (\partial_y \rho) \, dy 
				&= \int_0^1 \gamma \rho^{\gamma-1} e^s r w^3 \left( \frac{w - u}{2r} \right) \, dy \\
				&= \frac{\gamma}{2} \int_0^1 \rho^{\gamma-1} e^s w^4 \, dy - \frac{\gamma}{2} \int_0^1 \rho^{\gamma-1} e^s u w^3 \, dy.
			\end{aligned}
		\end{equation}
		Combining \eqref{33.225}-\eqref{33.227} we derive the energy equality
		\begin{equation}\label{6.5}
			\frac{1}{4} \frac{d}{d\tau} \|w\|_{L^4}^4 + \frac{\gamma}{2} \int_0^1 \rho^{\gamma-1} e^s w^4 \, dy 
			= \frac{\gamma}{2} \int_0^1 \rho^{\gamma-1} e^s u w^3 \, dy - \int_0^1 \rho^\gamma e^s r w^3 (\partial_y s) \, dy.
		\end{equation}
		Applying Young's inequality to the first term on the RHS of \eqref{6.5}
		\begin{equation}\label{6.6}
			\left| \frac{\gamma}{2} \int_0^1 \rho^{\gamma-1} e^s u w^3 \, dy \right| 
			\le \frac{\gamma}{4} \int_0^1 \rho^{\gamma-1} e^s w^4 \, dy + C \int_0^1 \rho^{\gamma-1} e^s u^4 \, dy,    
		\end{equation}
		where $\int_0^1{\rho ^{\gamma -1}}e^su^4$ can be bounded by
		\begin{equation}
			\begin{split}
				\int_0^1{\rho ^{\gamma -1}}e^su^4\,dy&\le C\int_0^1{\rho ^{\gamma -1}}u^4\,dy
				\\
				&\le C\left\| \rho ^{\frac{1}{2}}r^{\xi} \right\| _{L^{\infty}\left( 0,R \right)}^{2\gamma -2}\int_0^1{\frac{u^4}{r^2}}\,dy
				\\
				&\le C\left( T \right)\int_0^1{\frac{u^4}{r^2}}\,dy .
			\end{split}
		\end{equation}
		For the second source term in \eqref{6.5}, using the radially weighted estimate \eqref{5.20}, we obtain
		\begin{equation}\label{6.8}
			\begin{split}
				\int_0^1{\rho ^{\gamma}}e^srw^3(\partial _ys)\,dy&\le \frac{\gamma}{8}\int_0^1{\rho ^{\gamma -1}e^sw^4dy}+C\int_0^1{\rho ^{\gamma +3}r^4dy}
				\\
				&\le \frac{\gamma}{8}\int_0^1{\rho ^{\gamma -1}e^sw^4dy}+C\left\| \rho ^{\frac{1}{2}}r^{\xi} \right\| _{L^{\infty}\left( 0,R \right)}^{2\gamma +6}\int_0^1{r^{4-\xi \left( 2\gamma +6 \right)}dy}
				\\
				&\le \frac{\gamma}{8}\int_0^1{\rho ^{\gamma -1}e^sw^4dy}+C\left( T,R \right),
			\end{split}
		\end{equation}
		where $\xi$ is small enough s.t.
		$$ \xi \le \frac{2}{\gamma+3}.$$
		Inserting \eqref{6.6}--\eqref{6.8} into \eqref{6.5}, we derive
		\begin{equation}
			\sup_{0\le \tau \le T} \|w\|_{{L^4}(0,1)}^4 + \int_0^T \int_0^1 \rho^{\gamma-1} w^4 \, dy d\tau \le C(T).
		\end{equation}
	\end{proof}
	\subsection{Upper bound estimates for the density}
	The $L^4$ estimates for the velocity field and the effective velocity allow us to obtain the upper bound of the density.
	\begin{proposition}\label{7.1}
		There exists a constant $C > 0$, dependent of $T$, such that
		\begin{equation}
			\sup_{0 \le t \le T} \| \rho \|_{L^\infty(0,R)} \le C(T).
		\end{equation}
	\end{proposition}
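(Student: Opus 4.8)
The plan is to upgrade the crude BD bound $\nabla\rho^{1/2}\in L^\infty(0,T;L^2(\Omega))$ to a uniform bound for $\rho^{1/4}$ in $W^{1,4}(\Omega)$, and then to conclude by the Sobolev embedding $W^{1,4}(B_R)\hookrightarrow C(\overline{B_R})$, which is available in two space dimensions since $4>N=2$. The reason this is the right move is that $\nabla\rho^{1/2}\in L^2$ is exactly the critical Sobolev regularity in $\mathbb{R}^2$ and does not by itself control $\|\rho\|_{L^\infty}$, whereas the $L^4$-estimate of Proposition \ref{6.1} supplies precisely the gain needed to pass the endpoint.

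First I would record the pointwise identity linking $\partial_r\rho^{1/4}$ to the effective velocity. In the two-dimensional radial setting $\partial y/\partial r=\rho r$, so $\partial_y=\frac1{\rho r}\partial_r$, hence $r\partial_y\rho=\partial_r(\ln\rho)$; combining this with $w=u+2r\partial_y\rho$ gives
\[
\partial_r\rho^{1/4}=\tfrac14\rho^{1/4}\,\partial_r(\ln\rho)=\tfrac14\rho^{1/4}(r\partial_y\rho)=\tfrac18\rho^{1/4}(w-u).
\]
Therefore, using $dx=2\pi r\,dr$ for a radial function on $B_R$ and the change of variables $dy=\rho r\,dr$,
\begin{align*}
\|\nabla\rho^{1/4}\|_{L^4(\Omega)}^4
&=2\pi\int_0^R\big|\partial_r\rho^{1/4}\big|^4\,r\,dr
=\frac{2\pi}{8^4}\int_0^R\rho\,(w-u)^4\,r\,dr\\
&=\frac{2\pi}{8^4}\int_0^1(w-u)^4\,dy
\le C\Big(\|w\|_{L^4(0,1)}^4+\|u\|_{L^4(0,1)}^4\Big)\le C(T),
\end{align*}
the last step being Proposition \ref{6.1}. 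Since moreover $\|\rho^{1/4}\|_{L^4(\Omega)}^4=\int_\Omega\rho\,dx$ is a fixed constant (conservation of mass together with the normalization of the excerpt), we obtain $\sup_{0\le t\le T}\|\rho^{1/4}\|_{W^{1,4}(\Omega)}\le C(T)$.

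It then suffices to invoke the Sobolev embedding (Morrey's inequality) $W^{1,4}(B_R)\hookrightarrow C(\overline{B_R})$ on the two-dimensional ball to get $\sup_{0\le t\le T}\|\rho^{1/4}\|_{L^\infty(0,R)}\le C(T)$, i.e. $\sup_{0\le t\le T}\|\rho\|_{L^\infty(0,R)}\le C(T)$, as claimed.

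I do not anticipate a genuine obstacle at this step: the substance has already been absorbed into Propositions \ref{prop3.1} and \ref{6.1}. The only point requiring care is the behaviour near the origin, and it is handled for free — the Jacobian weight $r\,dr$ of the planar measure is precisely what converts $\int_0^R\rho(w-u)^4r\,dr$ into the already-controlled Lagrangian integral $\int_0^1(w-u)^4\,dy$, so that, in contrast to the weighted bound of Lemma \ref{lemma 2D}, no auxiliary power of $r$ needs to accompany the density itself. (In three dimensions the embedding $W^{1,4}(B_R)\hookrightarrow C(\overline{B_R})$ is still valid since $4>3$, but the exponent arithmetic in the corresponding $L^4$-estimates is tighter, which is where the restriction $1<\gamma<3$ enters; for $N=2$ the argument above should already be complete.)
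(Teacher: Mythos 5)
Your argument is correct and is essentially the paper's: both of you observe $\partial_r\rho^{1/4}=\tfrac18\rho^{1/4}(w-u)$, pull back the $L^4$ bounds on $u$ and $w$ from Proposition \ref{6.1} to get $\nabla\rho^{1/4}\in L^\infty_t L^4_x(\Omega)$, and conclude by the two-dimensional Sobolev embedding $W^{1,4}(B_R)\hookrightarrow L^\infty$. You have simply written out the change of variables and the (routine) $L^4$-bound on $\rho^{1/4}$ itself via conservation of mass, which the paper leaves implicit.
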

	\begin{proof}
		Since from \eqref{6.1} and \eqref{6.2}, we arrive at 
		\begin{equation}
			\mathop {\mathrm{sup}} \limits_{0\le t\le T}\left\| r\partial _y\rho \right\| _{{L^4}(0,1)}^{4}\le \mathop {\mathrm{sup}} \limits_{0\le t\le T}\left\| w-u \right\| _{{L^4}(0,1)}^{4}\le \mathop {\mathrm{sup}} \limits_{0\le t\le T}\left\| w \right\| _{{L^4}(0,1)}^{4}+\mathop {\mathrm{sup}} \limits_{0\le t\le T}\left\| u \right\| _{{L^4}(0,1)}^{4}\le C\left( T \right) .
		\end{equation}
		In Euler coordinates, this is equivalent to 
		$$\mathop {\mathrm{sup}} \limits_{0\le t\le T}\int_0^R{\left| \partial _r\rho ^{\frac{1}{4}} \right|^4}r\,dr\le C(T).$$
		By virtue of the two-dimensional Sobolev embedding, we obtain
		\begin{equation}
			\mathop {\mathrm{sup}} \limits_{0\le t\le T}\| \rho ^{\frac{1}{4}} \| _{L^{\infty}}\le \| \rho ^{\frac{1}{4}} \| _{L^{\infty}L^4}+\| \nabla \rho ^{\frac{1}{4}} \| _{L^{\infty}L^4}.
		\end{equation}
		Therefore, we get the upper bound of the density.
	\end{proof}
	\subsection{$L^{\infty}$ estimates for velocity and effective velocity}
	We now deduce the $L^\infty$ estimates for the velocity and effective velocity fields by combining their $L^{2n}$ estimates. This is a crucial step for establishing the density lower bound.
	\begin{proposition}\label{pro4.4}
		There exists a constant $C(T) > 0$ such that
		\begin{equation}\label{88.1}
			\sup_{0\le \tau \le T} \|w(\cdot, \tau)\|_{L^\infty(0,1)} \le C(T).
		\end{equation}
		and
		\begin{equation}\label{88.2}
			\sup_{0\le \tau \le T} \|u(\cdot, \tau)\|_{L^\infty(0,1)} \le C(T).
		\end{equation}
	\end{proposition}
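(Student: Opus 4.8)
The plan is to establish, for each integer $n\ge 1$, a pair of coupled differential inequalities for $\|w(\cdot,\tau)\|_{L^{2n}(0,1)}^{2n}$ and $\|u(\cdot,\tau)\|_{L^{2n}(0,1)}^{2n}$ whose constant is independent of $n$, add them, close the resulting inequality by Gronwall, and then pass to the limit $n\to\infty$. The ingredients are: the density upper bound $\sup_{[0,T]}\|\rho\|_{L^\infty}\le C(T)$ from Proposition \ref{7.1}, the entropy bounds $0<\underline{s}\le s\le\bar{s}$, the propagation $\|\partial_y s(\cdot,\tau)\|_{L^\infty}=\|\partial_y s_0\|_{L^\infty}\le C$ coming from $\partial_\tau\partial_y s=0$ (differentiate the second equation of \eqref{4.6}), and $r\le R$ on $[0,1]$.

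For $w$, I would multiply the third equation of \eqref{4.6} (with $N=2$) by $w^{2n-1}$ and integrate over $[0,1]$, which gives $\frac{1}{2n}\frac{d}{d\tau}\|w\|_{L^{2n}}^{2n}=-\int_0^1 r\,w^{2n-1}\partial_y P\,dy$ with $\partial_y P=\gamma\rho^{\gamma-1}e^s\partial_y\rho+\rho^\gamma e^s\partial_y s$. Substituting $r\partial_y\rho=\tfrac12(w-u)$, the $\partial_y\rho$-term splits into the nonpositive dissipation $-\tfrac{\gamma}{2}\int_0^1\rho^{\gamma-1}e^s w^{2n}\,dy$, which I simply discard, and the cross term $\tfrac{\gamma}{2}\int_0^1\rho^{\gamma-1}e^s u\,w^{2n-1}\,dy$. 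Since $\rho^{\gamma-1}e^s\le C(T)$ and $\rho^\gamma e^s r|\partial_y s|\le C(T)$, applying Young's inequality only in the crude $n$-free forms $|w|^{2n-1}|u|\le|w|^{2n}+|u|^{2n}$ and $|w|^{2n-1}\le|w|^{2n}+1$ yields
\begin{equation*}
\frac{1}{2n}\frac{d}{d\tau}\|w\|_{L^{2n}}^{2n}\le C(T)\bigl(\|w\|_{L^{2n}}^{2n}+\|u\|_{L^{2n}}^{2n}+1\bigr).
\end{equation*}

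For $u$, I would multiply the third (momentum) equation of \eqref{4.4} (with $N=2$) by $r\,u^{2n-1}$ and integrate. Integrating the viscous term by parts using $\partial_y r=\frac{1}{\rho r}$ — the boundary terms vanish since $u(R,\tau)=0$ and $r=0$ at $y=0$ — produces, besides the two nonnegative dissipation contributions $\frac{2u^{2n}}{r^2}$ and $2(2n-1)\rho^2 r^2 u^{2n-2}(\partial_y u)^2$, an indefinite term $-4n\int_0^1\rho u^{2n-1}\partial_y u\,dy$ whose coefficient grows with $n$; the crucial point is that this term is \emph{exactly} cancelled by the contribution $-\int_0^1 2u^{2n}\partial_y\rho\,dy=+4n\int_0^1\rho u^{2n-1}\partial_y u\,dy$ arising (after integration by parts in $\partial_y\rho$) from the lower-order term $\frac{2(N-1)}{r}u\partial_y\rho$ in \eqref{4.4}. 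What survives is
\begin{equation*}
\frac{1}{2n}\frac{d}{d\tau}\|u\|_{L^{2n}}^{2n}+\int_0^1\Bigl(\frac{2u^{2n}}{r^2}+2(2n-1)\rho^2 r^2 u^{2n-2}(\partial_y u)^2\Bigr)dy=-\int_0^1 r\,u^{2n-1}\partial_y P\,dy,
\end{equation*}
and the right-hand side is bounded exactly as in the $w$-estimate (again via $r\partial_y\rho=\tfrac12(w-u)$, the density and entropy bounds, and $n$-free Young), giving $\frac{1}{2n}\frac{d}{d\tau}\|u\|_{L^{2n}}^{2n}\le C(T)\bigl(\|w\|_{L^{2n}}^{2n}+\|u\|_{L^{2n}}^{2n}+1\bigr)$.

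Adding the two inequalities and setting $\Phi_n(\tau):=\|w(\cdot,\tau)\|_{L^{2n}}^{2n}+\|u(\cdot,\tau)\|_{L^{2n}}^{2n}+1$ gives $\Phi_n'\le 2nC(T)\Phi_n$, hence $\Phi_n(\tau)\le\Phi_n(0)e^{2nC(T)\tau}$ by Gronwall. Taking $2n$-th roots, $\|w(\cdot,\tau)\|_{L^{2n}},\ \|u(\cdot,\tau)\|_{L^{2n}}\le\Phi_n(0)^{1/(2n)}e^{C(T)\tau}$; since $u_0\in L^\infty$ and $w_0=u_0+2r\partial_y\rho_0=u_0+2\partial_r\rho_0/\rho_0\in L^\infty$ by the regularity of the data and $\rho_0\ge\underline{\rho}>0$, the factor $\Phi_n(0)^{1/(2n)}\le 3^{1/(2n)}\max(\|w_0\|_{L^\infty},\|u_0\|_{L^\infty},1)$ stays bounded uniformly in $n$, and letting $n\to\infty$ gives \eqref{88.1} and \eqref{88.2}. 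The main obstacle is exactly the requirement that $C(T)$ not depend on $n$: this is what rules out any $n$-dependent absorption and forces the algebraic cancellation of the $4n\rho u^{2n-1}\partial_y u$ term against the $\frac{2(N-1)}{r}u\partial_y\rho$ term; it is also the reason Proposition \ref{7.1} must come first, so that the prefactors $\rho^{\gamma-1}e^s$ and $\rho^\gamma e^s r$ are under control.
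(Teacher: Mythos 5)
Your proof follows the paper's argument step for step: multiply the effective-velocity equation by $w^{2n-1}$ and the momentum equation by $ru^{2n-1}$, obtain coupled $L^{2n}$ differential inequalities with an $n$-independent constant, add, apply Gronwall, and let $n\to\infty$. The only difference is cosmetic: you explicitly display the cancellation of the $4n\rho u^{2n-1}\partial_y u$ term between the viscous contribution and the $\tfrac{2}{r}u\,\partial_y\rho$ term (the paper shows this only in the $L^4$ computation of Proposition 3.2 and then states the $L^{2n}$ inequality directly), and your dissipation coefficients carry an extra factor of $2$ that the paper's displayed inequality drops — immaterial since both arguments discard the dissipation before invoking Gronwall.
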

	\begin{proof}
		Fix $n \in \mathbb{N}$, multiplying the effective momentum equation $\partial_\tau w + r\partial_y P = 0$ by $w^{2n-1}$ and integrating over $[0,1]$
		$$\frac{1}{2n} \frac{d}{d\tau} \|w\|_{L^{2n}}^{2n} + \int_0^1 \gamma \rho^{\gamma-1} e^s r w^{2n-1} (\partial_y \rho) \, dy = - \int_0^1 \rho^\gamma e^s r w^{2n-1} (\partial_y s) \, dy.$$
		Substituting the relation $\partial_y \rho = \frac{w-u}{2r}$ into the second term on the LHS, we arrive at
		\begin{equation}\label{8.1}
			\begin{aligned}
				\frac{1}{2n} \frac{d}{d\tau} \|w\|_{L^{2n}}^{2n} + \frac{\gamma}{2} \int_0^1 \rho^{\gamma-1} e^s w^{2n} \, dy 
				&= \frac{\gamma}{2} \int_0^1 \rho^{\gamma-1} e^s u w^{2n-1} \, dy \\
				&\quad - \int_0^1 \rho^\gamma e^s r w^{2n-1} (\partial_y s) \, dy\triangleq G_1+G_2.
			\end{aligned}
		\end{equation}
		$G_1$ can be estimated as follows
		\begin{equation}\label{8.2}
			\begin{aligned}
				\left| \int_0^1{\rho ^{\gamma -1}}e^suw^{2n-1}\,dy \right|&\le C\left( T \right) \| e^s\| _{L^{\infty}}\left\| u \right\| _{L^{2n}}\| w\| _{L^{2n}}^{2n-1}\\
				&\le C(T)\left( \left\| u \right\| _{L^{2n}}^{2n}+\| w\| _{L^{2n}}^{2n} \right) .\\
			\end{aligned}
		\end{equation}
		Since $s_{y\tau}=0$, we have $\|\partial_y s\|_{L^\infty} = \|\partial_y s_0\|_{L^\infty} \le C$.
		The term $G_2$ can be bounded as follows
		\begin{equation}\label{8.3}
			\left| \int_0^1 \rho^\gamma e^s r w^{2n-1} (\partial_y s) \, dy \right| \le C \|\rho\|_{L^\infty}^\gamma \|w\|_{L^{2n}}^{2n-1} \le C(T) (1 + \|w\|_{L^{2n}}^{2n}).
		\end{equation}
		Combining \eqref{8.1}-\eqref{8.3}, we arrive at
		\begin{equation}\label{8.4}
			\frac{1}{2n} \frac{d}{d\tau} \|w\|_{L^{2n}}^{2n} + \frac{\gamma}{2} \int_0^1 \rho^{\gamma-1} e^s w^{2n} \, dy \le C\left( T \right) (1+\| w\| _{L^{2n}}^{2n}+\left\| u \right\| _{L^{2n}}^{2n}).
		\end{equation}
		Multiplying the second equation of \eqref{4.4} by $r u^{2n-1}$ and integrating over the spatial domain $[0,1]$, we obtain
		\begin{equation}\label{8.5}
			\begin{aligned}
				\frac{1}{2n}&\frac{d}{d\tau}\int_0^1{u^{2n}}\,dy+\int_0^1{\left( \frac{u^{2n}}{r^2}+(2n-1)\rho ^2(\partial _yu)^2u^{2n-2}r^2 \right)}\,dy\\
				&\le \int_0^1{|}\partial _yP|r|u|^{2n-1}\,dy\\
				&\le \int_0^1{\gamma}\rho ^{\gamma -1}(|w|+|u|)|u|^{2n-1}\,dy+\int_0^1{e^s\left| s_y \right|\rho ^{\gamma}r|u|^{2n-1}\,dy}\\
				&\le C\left( T \right) \left( 1+\left\| w \right\| _{L^{2n}}^{2n}+\left\| u \right\| _{L^{2n}}^{2n} \right) .\\
			\end{aligned}
		\end{equation}
		Coupling \eqref{8.4} with \eqref{8.5} yields
		\begin{equation}
			\begin{split}
				\frac{1}{2n}&\frac{d}{d\tau}\left( \| w\| _{L^{2n}}^{2n}+\| u\| _{L^{2n}}^{2n} \right) +\frac{\gamma}{2}\int_0^1{\rho ^{\gamma -1}}e^sw^{2n}\,dy
				\\
				&+\int_0^1{\left( \frac{u^{2n}}{r^2}+(2n-1)\rho ^2(\partial _yu)^2u^{2n-2}r^2 \right)}\,dy\le C\left( T \right) \left( 1+\left\| w \right\| _{L^{2n}}^{2n}+\left\| u \right\| _{L^{2n}}^{2n} \right).
			\end{split}
		\end{equation}
		Applying Gronwall's inequality yields
		$$\underset{0\le t\le T}{\mathrm{sup}}\left( \| w\| _{L^{2n}}^{2n}+\| u\| _{L^{2n}}^{2n} \right) \le \left( \left\| w_0 \right\| _{L^{2n}}^{2n}+\left\| u_0 \right\| _{L^{2n}}^{2n}+2nTC\left( T \right) \right) \exp \left\{ 2nC\left( T \right) T \right\} .$$
		Taking the $(2n)$-th root on both sides of the inequality and using the fact
		$$\underset{n\rightarrow \infty}{\lim}\left( \left\| f \right\| _{L^{2\mathrm{n}}}^{2n}+\left\| g \right\| _{L^{2\mathrm{n}}}^{2n} \right) ^{\frac{1}{2n}}=\max \left( \| f\| _{L^{\infty}},\| g\|_{L^{\infty}} \right),$$
		we arrive at 
		\begin{equation}
			\underset{0\le t\le T}{\mathrm{sup}}\max \left( \| w\| _{L^{\infty}},\| u\| _{L^{\infty}} \right) \le C\left( T \right) \left( \left\| w_0 \right\| _{L^{\infty}}+\left\| u_0 \right\| _{L^{\infty}}+1 \right). 
		\end{equation}
		We conclude the proof.
	\end{proof}
	\subsection{Lower bound estimates for the density}
	Finally, we establish the lower bound of the density to complete the last step of the lower-order estimates.
	\begin{proposition}\label{pro4.5}
		There exists a constant $C(T) > 0$ such that
		\begin{equation} \label{9.1}
			\|\rho^{-1}(t)\|_{L^\infty(\Omega)}\le C(T).
		\end{equation}
		\begin{proof}
			In view of \eqref{88.1} and \eqref{88.2}, we deduce
			\begin{equation}\label{9.2}
				\underset{0\le t\le T}{\mathrm{sup}}\left\| r\partial _y\rho \right\| _{L^{\infty}\left( 0,1 \right)}\le C\left( T \right) .
			\end{equation}
			Let $v = \rho^{-1}$. Since $v$ satisfies the equation $v_\tau = (ru)_y$, we have$$\int_0^1 v(y, \tau) \, dy = \int_0^1 v_0(y) \, dy.$$
			Let $\beta < 1$ be sufficiently small. By virtue of the one-dimensional Sobolev embedding $W^{1,1}(I) \hookrightarrow L^\infty(I)$ and \eqref{9.2}
			$$
			\begin{aligned}
				v^\beta(y, \tau) &\le \int_0^1 v^\beta \, dy + \beta \int_0^1 v^{\beta-1} |\partial_y v| \, dy \\
				&\le \left( \int_0^1 v \, dy \right)^{\beta} + \beta \int_0^1 |\partial_y \rho| v^{\beta+1} \, dy \\
				&\le C(\beta) + \beta \|r\partial_y \rho\|_{L^\infty(0,1)} \left( \int_0^1 v r^{-\frac{3}{2}} \, dy \right)^{\frac{2}{3}} \left( \int_0^1 v^{3\beta+1} \, dy \right)^{\frac{1}{3}} \\
				&\le C(\beta) + C(T)\beta   \|\rho^{-1}(t)\|_{L^\infty(\Omega)} ^\beta.
			\end{aligned}$$
			Choosing $\beta$ sufficiently small s.t.
			\begin{equation}
				\beta =\min \left\{ \frac{1}{2C\left( T \right)},\frac{1}{2} \right\} ,
			\end{equation}
			and taking the supremum over $(y, \tau)$, we arrive at
			$$\|\rho^{-1}(t)\|_{L^\infty(\Omega)}\le C(T).$$
		\end{proof}
	\end{proposition}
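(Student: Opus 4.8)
The key leverage comes from Proposition~\ref{pro4.4}, which already gives $\sup_{[0,T]}\|w\|_{L^\infty(0,1)}\le C(T)$ and $\sup_{[0,T]}\|u\|_{L^\infty(0,1)}\le C(T)$; since $w-u=2r\partial_y\rho$, these merge into the single estimate that will drive everything, namely $\sup_{[0,T]}\|r\partial_y\rho\|_{L^\infty(0,1)}\le C(T)$. The plan is to bootstrap a fractional power of the specific volume $v:=\rho^{-1}$. Differentiating $v$ along the flow and using the mass equation $\partial_\tau\rho+\rho^2\partial_y(ru)=0$ gives $v_\tau=\partial_y(ru)$; integrating in $y$ over $[0,1]$ and noting that $ru$ vanishes at $y=0$ (there $r=0$) and at $y=1$ (there $u=0$ by $\rho\mathbf u|_{\partial\Omega}=0$ and $\rho>0$) shows $\int_0^1 v(\cdot,\tau)\,dy=\int_0^1 v_0\,dy=:m_0$ for all $\tau$, a finite, time-independent quantity.

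Next I would fix a small exponent $\beta\in(0,1)$ and apply the one-dimensional embedding $W^{1,1}(0,1)\hookrightarrow L^\infty(0,1)$ to $v^\beta$, obtaining
$$v^\beta(y,\tau)\le\int_0^1 v^\beta\,dy+\beta\int_0^1 v^{\beta-1}|\partial_y v|\,dy.$$
The first term is at most $m_0^\beta$ by Jensen's inequality on the unit interval. In the second, $\partial_y v=-\rho^{-2}\partial_y\rho$, so its integrand equals $\beta v^{\beta+1}|\partial_y\rho|\le\beta\|r\partial_y\rho\|_{L^\infty}\,v^{\beta+1}r^{-1}$, and I would split the surviving integral by Hölder with exponents $(3/2,3)$,
$$\int_0^1\frac{v^{\beta+1}}{r}\,dy\le\Big(\int_0^1 v\,r^{-3/2}\,dy\Big)^{2/3}\Big(\int_0^1 v^{3\beta+1}\,dy\Big)^{1/3}.$$
Passing the first factor back to Eulerian variables via $dy=\rho r\,dr$ turns $v\,r^{-3/2}\,dy$ into $r^{-1/2}\,dr$, so that factor equals $2R^{1/2}$, a harmless constant depending only on $R$; the second factor is $\le\big(\|v\|_{L^\infty}^{3\beta}m_0\big)^{1/3}=m_0^{1/3}\|v\|_{L^\infty}^{\beta}$.

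Collecting these bounds yields, for each $\tau\in[0,T]$, a self-improving inequality of the shape $\|v(\tau)\|_{L^\infty(0,1)}^{\beta}\le C(\beta)+C(T)\,\beta\,\|v(\tau)\|_{L^\infty(0,1)}^{\beta}$, where $\|v(\tau)\|_{L^\infty}$ is a priori finite because the local classical solution keeps $\rho>0$ with the regularity of Lemma~\ref{lemma2.1}. Choosing $\beta=\min\{(2C(T))^{-1},1/2\}$ absorbs the last term into the left-hand side and gives $\|v(\tau)\|_{L^\infty}\le(2C(\beta))^{1/\beta}=C(T)$, i.e. $\rho\ge C(T)^{-1}$ on $\Omega\times[0,T]$.

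The only genuinely delicate point I anticipate is the weight at the origin: controlling $|\partial_y\rho|$ only produces the singular factor $r^{-1}$, and the Hölder split must be arranged so that $\int_0^1 v\,r^{-\alpha}\,dy$ remains finite after returning to $r$-coordinates, which in two dimensions ($dy=\rho r\,dr$) requires $\alpha<2$; the choice $\alpha=3/2$ above does this with room to spare. This is also precisely where the strength of Proposition~\ref{pro4.4} is used: it is the $L^\infty$ bound on $r\partial_y\rho$, not merely an $L^2$ or $L^4$ bound, that renders the nonlinear term truly lower order and permits the absorption. Everything else is a routine application of Jensen, Hölder, and the coordinate change.
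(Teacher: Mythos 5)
Your argument reproduces the paper's proof step for step: both use Proposition~\ref{pro4.4} to control $r\partial_y\rho$ in $L^\infty$, bootstrap $v^\beta=\rho^{-\beta}$ via the $W^{1,1}\hookrightarrow L^\infty$ embedding on $(0,1)$, split the weighted integral by H\"older with exponents $(3/2,3)$ (converting the first factor to an Eulerian integral of $r^{-1/2}$), and absorb the resulting $C(T)\beta\|v\|_{L^\infty}^\beta$ term by taking $\beta=\min\{(2C(T))^{-1},1/2\}$. The extra remarks you add — why $\int_0^1 v\,dy$ is conserved, why the exponent on $r$ must be $<2$, and why $\|v\|_{L^\infty}$ is a priori finite so the absorption is legitimate — fill in details the paper leaves implicit but do not change the route.
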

	\section{A priori estimates: Lower-Order Estimates for $N=3$}
	The proof for the three-dimensional case is essentially the same as that for the two-dimensional case. However, due to changes in the weighted estimates near the origin, a restriction on $\gamma$ is imposed. Now we present the lower-order estimates for the three-dimensional case.
	\subsection{$L^2$ estimates for velocity and effective velocity}
	\begin{proposition} \label{prop5.1}
		There exists a constant $C(T) > 0$, dependent of $T$, such that
		\begin{equation} \label{50.1}
			\sup_{0 \le \tau \le T} \int_0^1 (u^2 +\rho ^{\gamma -1}) \, dy + \int_0^T \int_0^1 \left( \frac{u^2}{r^2} + \rho^2 (\partial_y u)^2 r^4 \right) \, dy d\tau \le C(T),
		\end{equation}
		\begin{equation} \label{50.2}
			\sup_{0 \le \tau \le T} \int_0^1 (u + 2r^2\partial_y \rho)^2 \, dy + \int_0^T \int_0^1 \rho^{\gamma-1} (r^2\partial_y \rho)^2 \, dy d\tau \le C(T).
		\end{equation}
	\end{proposition}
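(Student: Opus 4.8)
The plan is to mirror the two-dimensional argument of Proposition \ref{prop3.1} almost verbatim, the only structural changes being that the effective velocity is now $w=u+2r^{2}\partial_y\rho$, the weight $r^{N-1}$ becomes $r^{2}$, and the weighted bound near the origin is supplied by Lemma \ref{lemma 3D} rather than Lemma \ref{lemma 2D}. \emph{Step 1 (basic energy).} Multiply the momentum equation in \eqref{4.4} with $N=3$ by $r^{2}u$ and integrate over $[0,1]$. Integration by parts turns the viscous term into $\int_0^1 2\rho^{2}(\partial_y(r^{2}u))^{2}\,dy$, and since $\partial_y(r^{2}u)=\frac{2u}{\rho r}+r^{2}\partial_y u$ this equals $\int_0^1(\frac{8u^{2}}{r^{2}}+8\rho ru\,\partial_y u+2\rho^{2}r^{4}(\partial_y u)^{2})\,dy$; the Navier term $\frac{4}{r}u\,\partial_y\rho$, after integrating by parts in $\partial_y\rho$ and using $\partial_y r=\frac{1}{\rho r^{2}}$, contributes $-\int_0^1(\frac{4u^{2}}{r^{2}}+8\rho ru\,\partial_y u)\,dy$, so the $\rho ru\,\partial_y u$ cross terms cancel and one is left with $\frac{d}{d\tau}\int_0^1(\frac12 u^{2}+\frac{\rho^{\gamma-1}}{\gamma-1}e^{s})\,dy+\int_0^1(\frac{4u^{2}}{r^{2}}+2\rho^{2}r^{4}(\partial_y u)^{2})\,dy=0$, which is \eqref{50.1}; the pressure term was rewritten as $\frac{d}{d\tau}\int\frac{\rho^{\gamma-1}}{\gamma-1}e^{s}\,dy$ using the continuity equation and $\partial_\tau s=0$.

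\emph{Step 2 (BD entropy identity and the source term).} Multiply $\partial_\tau w+r^{2}\partial_y P=0$ from \eqref{4.6} by $w=u+2r^{2}\partial_y\rho$ and integrate. The contribution of $u$ recombines, again via $\partial_\tau\rho=-\rho^{2}\partial_y(r^{2}u)$ and $\partial_\tau s=0$, into $\frac{d}{d\tau}\int\frac{\rho^{\gamma-1}}{\gamma-1}e^{s}\,dy$, while the contribution of $2r^{2}\partial_y\rho$, using $\partial_y P=\gamma\rho^{\gamma-1}e^{s}\partial_y\rho+\rho^{\gamma}e^{s}\partial_y s$, produces
$$\frac{d}{d\tau}\Big(\int_0^1\tfrac12 w^{2}\,dy+\int_0^1\tfrac{\rho^{\gamma-1}}{\gamma-1}e^{s}\,dy\Big)+2\gamma\int_0^1 r^{4}\rho^{\gamma-1}e^{s}(\partial_y\rho)^{2}\,dy=-2\int_0^1 r^{4}\rho^{\gamma}e^{s}(\partial_y s)(\partial_y\rho)\,dy.$$
Young's inequality bounds the right-hand side by $\gamma\int r^{4}\rho^{\gamma-1}e^{s}(\partial_y\rho)^{2}\,dy+C\int r^{4}\rho^{\gamma+1}e^{s}(\partial_y s)^{2}\,dy$; the first term is absorbed on the left. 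Because $\partial_\tau\partial_y s=0$ we have $\|\partial_y s\|_{L^\infty}=\|\partial_y s_0\|_{L^\infty}=\|\frac{\partial_r s_0}{\rho_0 r^{2}}\|_{L^\infty}\le C$, and writing $r^{4}\rho^{\gamma+1}=(\rho r^{2})^{2}\rho^{\gamma-1}$ together with $\int_0^1\rho^{\gamma-1}\,dy\le C$ (from \eqref{50.1}) reduces the remaining term to $C\|\rho r^{2}\|_{L^\infty(0,R)}^{2}$.

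\emph{Step 3 (controlling $\|\rho r^{2}\|_{L^\infty}$ and closing via Gronwall).} From $(\rho r^{2})(y)\le\int_0^1|\partial_y(\rho r^{2})|\,dy=\int_0^1(|\rho_y r^{2}|+\frac{2}{r})\,dy$ we get $\|\rho r^{2}\|_{L^\infty}^{2}\le C(\|\rho_y r^{2}\|_{L^2(0,1)}^{2}+\|\frac1r\|_{L^1(0,1)}^{2})$; for the last term, change variables by $dy=\rho r^{2}\,dr$ to obtain $\|\frac1r\|_{L^1}=\int\rho r\,dr\le\|\rho^{1/2}r^{1/2+\xi}\|_{L^\infty(0,R)}\int\rho^{1/2}r^{1/2-\xi}\,dr$, and bound $\int\rho^{1/2}r^{1/2-\xi}\,dr$ by Hölder against $(\int\rho^{\gamma}r^{2}\,dr)^{1/(2\gamma)}=(\int\rho^{\gamma-1}\,dy)^{1/(2\gamma)}\le C$, the $r$-integral being finite as long as $\xi<\frac32(1-\frac1\gamma)$, which is admissible for every $\gamma>1$. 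Lemma \ref{lemma 3D} then gives $\|\rho^{1/2}r^{1/2+\xi}\|_{L^\infty(0,R)}\le C(\xi)(1+\|\nabla\rho^{1/2}\|_{L^2(\Omega)})=C(\xi)(1+\|\rho_y r^{2}\|_{L^2(0,1)})$, hence $\|\rho r^{2}\|_{L^\infty}^{2}\le C(\xi)(1+\|\rho_y r^{2}\|_{L^2(0,1)}^{2})$. Adding the two identities and setting $E(\tau):=\int_0^1(\frac12 u^{2}+\frac12 w^{2}+\frac{\rho^{\gamma-1}}{\gamma-1}e^{s})\,dy$, and using $r^{2}\partial_y\rho=\frac{w-u}{2}$ so that $\|\rho_y r^{2}\|_{L^2}^{2}\le\frac12(\|u\|_{L^2}^{2}+\|w\|_{L^2}^{2})\le E(\tau)$, we arrive at $\frac{d}{d\tau}E(\tau)+(\text{dissipation})\le C(\xi)(1+2E(\tau))$; Gronwall on $[0,T]$ yields $E(t)\le(E(0)+C(\xi)t)e^{2C(\xi)t}\le C(T)$, giving \eqref{50.2}, and reinserting the dissipation completes \eqref{50.1}. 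One also records for later use $\|\rho^{1/2}r^{1/2+\xi}\|_{L^\infty(0,R)}\le C(T)$ and $\|\rho_y r^{2}\|_{L^2(0,1)}\le C(T)$.

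The only place the dimension enters is the weighted origin estimate and the Hölder bookkeeping for $\|1/r\|_{L^1}$, and I expect the sole (mild) technical point to be checking that the exponent in that Hölder step and the power in Lemma \ref{lemma 3D} are simultaneously admissible for a small fixed $\xi$; since this works for all $\gamma>1$, the restriction $\gamma<3$ in Theorem \ref{theorem 2.2} does not originate in this proposition but only later, in the density upper bound.
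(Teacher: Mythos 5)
Your proof is correct and follows essentially the same route as the paper's: the same basic energy identity, the same BD entropy identity for $w=u+2r^{2}\partial_y\rho$, the same Young's-inequality treatment of the source term reducing it to $\|\rho r^{2}\|_{L^\infty}^{2}$, the same fundamental-theorem-of-calculus plus change-of-variables plus Lemma \ref{lemma 3D} bound for $\|\rho r^{2}\|_{L^\infty}$ with the identical admissibility constraint $\xi<\tfrac{3}{2}(1-\tfrac{1}{\gamma})$, and the same Gronwall closure via $r^{2}\partial_y\rho=\tfrac{w-u}{2}$. (You correctly get $\tfrac{4u^{2}}{r^{2}}$ in the $N=3$ energy identity where the paper writes $\tfrac{2u^{2}}{r^{2}}$; this is a harmless coefficient discrepancy since the proposition only needs positivity of the dissipation, and your closing observation that the restriction $\gamma<3$ does not arise at this stage is also accurate.)
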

	
	\begin{proof}
		Multiplying the second equation of \eqref{4.4} with $r^2u$, we get basic energy estimate
		\begin{equation}
			\frac{d}{d\tau} \int_0^1 \left( \frac{1}{2}u^2 + \frac{\rho^{\gamma -1}}{\gamma -1}e^s \right) dy + \int_0^1 \left( 2\rho^2 r^4 (\partial_y u)^2 + \frac{2u^2}{r^2} \right) dy = 0.
		\end{equation}
		Therefore, we prove \eqref{10}.
		\par
		Now multiplying the third equation of \eqref{4.6} with $w=u+2r^2\partial_y \rho$, we get
		$$
		\begin{aligned}
			\frac{1}{2}\frac{d}{d\tau}\int_0^1{w^2}\,dy&=-\int_0^1{\partial _yP}(r^2u)\,dy-2\int_0^1{\partial _yP}(r^4\partial _y\rho )\,dy
			\\
			&=\int_0^1{P}\partial _y(r^2u)\,dy-2\int_0^1{\partial _yP}\left( r^4\partial _y\rho \right) \,dy
			\\
			&=-\frac{d}{d\tau} \int_0^1 \frac{\rho^{\gamma-1}}{\gamma-1} e^{s} \, dy-2\int_0^1{\partial _yP}\left( r^4\partial _y\rho \right) \,dy,
		\end{aligned}
		$$
		which is equivalent to  
		\begin{equation}\label{554.6}
			\frac{d}{d\tau} \left( \int_0^1 \frac{1}{2}w^2 \, dy + \int_0^1 \frac{\rho^{\gamma-1}}{\gamma-1} e^{s} \, dy \right) + \int_0^1 2\gamma r^4 \rho^{\gamma-1} e^s (\partial_y \rho)^2 \, dy = - \int_0^1 2 r^4 \rho^\gamma e^s (\partial_y s) (\partial_y \rho) \, dy.
		\end{equation}
		Using Young's inequality and absorbing the first term of source term into the dissipation on the left-hand side, we obtain
		\begin{equation}
			\frac{d}{d\tau} \left( \int_0^1 \frac{1}{2}w^2 \, dy + \int_0^1 \frac{\rho^{\gamma-1}}{\gamma-1} e^s \, dy \right)+ \int_0^1 \gamma r^4 \rho^{\gamma-1} e^s (\partial_y \rho)^2 \, dy \le C \|\rho r^2\|_{L^\infty}^2,
		\end{equation}
		Since
		\begin{equation}
			\left( \rho r^2 \right) \left( y \right) =\left( \rho r^2 \right) \left( y \right) -\left( \rho r^2 \right) \left( 0 \right) \le \int_0^1{\left| \left( \rho r^2 \right) _y \right|dy}\le \int_0^1{\left| \rho _yr^2 \right|+\left| 2rr_y\rho \right|dy},
		\end{equation}
		so we arrive at 
		$$\begin{aligned}
			\|\rho r^2\| _{L^{\infty}}^{2}&\le C\left( \| \rho _yr^2\| _{L^1}+\left\| \rho rr_y \right\| _{L^1} \right) ^2\\
			&=C\left( \| \rho _yr^2\| _{L^1}^{2}+\| \frac{1}{r} \| _{L^1}^{2} \right),
		\end{aligned}$$
		where we used the relation $\partial_y r = \frac{1}{\rho r^2}$. Now we estimate each term on the RHS. First, by Hölder's inequality, we get
		$$\|\rho_y r^2\|_{L^1}^2 = \left( \int_0^1 |\rho_y r^2| \, dy \right)^2 \le \left( \int_0^1 |\rho_y r^2|^2 \, dy \right) \left( \int_0^1 1 \, dy \right) = \|\rho_y r^2\|_{L^2}^2.$$
		Second, for the term $\left\| \frac{1}{r} \right\|_{L^1}$, using the coordinate transformation $dy = \rho r dr$, we have
		\begin{equation}
			\begin{aligned}
				\| \frac{1}{r} \| _{L^1}^{2}&=\left( \int{\frac{1}{r}}\,dy \right) ^2=\left( \int{\rho}r\,dr \right) ^2\\
				&=\left( \int{\rho ^{\frac{1}{2}}}r^{\xi +\frac{1}{2}}\cdot \rho ^{\frac{1}{2}}r^{\frac{1}{2}-\xi}\,dr \right) ^2\\
				&\le \| \rho ^{\frac{1}{2}}r^{\xi +\frac{1}{2}} \| _{L^{\infty}(0,R)}^{2}\left( \int{\rho ^{\frac{1}{2}}}r^{\frac{1}{\gamma}}r^{\frac{1}{2}-\frac{1}{\gamma}-\xi}\,dr \right) ^2\\
				&\le \| \rho ^{\frac{1}{2}}r^{{\xi}+\frac{1}{2}} \| _{L^{\infty}(0,R)}^{2}(\int{\rho ^{\gamma}}r^2dr)^{\frac{1}{\gamma}}(\int{r^{\frac{\gamma -2}{2\gamma -1}-\frac{2\gamma}{2\gamma -1}\xi}}dr)^{\frac{2\gamma -1}{\gamma}}\\
				&\le C(\xi )\| \rho ^{\frac{1}{2}}r^{{\xi}+\frac{1}{2}} \| _{L^{\infty}(0,R)}^{2},\\
			\end{aligned}  
		\end{equation}
		where $\xi$ is small enough s.t. 
		$$\xi <\frac{3}{2}-\frac{3}{2\gamma}.$$
		By Lemma \ref{lemma 3D}, in three-dimensional space, the weighted estimate at the center of the sphere is
		$$
		\| \rho ^{\frac{1}{2}}r^{\frac{1}{2}+\xi}\| _{L^{\infty}(0,R)}\le C(\xi )(\| \nabla \rho ^{\frac{1}{2}}\| _{L^2}+1)\\=C(\xi )\left( 1+\| \rho _yr^2\|_{L^2(0,1)} \right) .\\
		$$

		By the same argument as in the two-dimensional case, we derive
		\begin{equation}
			\begin{split}
				\frac{d}{d\tau}&\int_0^1{\left( \frac{1}{2}u^2+\frac{1}{2}w^2+\frac{\rho^{\gamma -1}}{\gamma -1}e^s \right)}dy+\int_0^1{\left( 2\rho ^2r^4(\partial _yu)^2+\frac{2u^2}{r^2} \right)+\gamma r^4 \rho^{\gamma-1} e^s (\partial_y \rho)^2}dy
				\\
				&\le C(\xi )\left( 1+\| \rho _yr^2\| _{L^2(0,1)}^{2} \right) .
				\\
				&\le C(\xi )\left( 1+\left\| u \right\| _{L^2(0,1)}^{2}+\left\| w \right\| _{L^2(0,1)}^{2} \right).
			\end{split}
		\end{equation}
		Using Gronwall's inequality, we prove \eqref{50.1} and \eqref{50.2}. Consequently, we also obtain
		\begin{equation}\label{44.99}
			\|\rho^{\frac{1}{2}}r^{{\xi}+\frac{1}{2}}\|_{L^{\infty}(0,R)} \le C(T),
		\end{equation}
		and 
		\begin{equation}
			\|\rho_y r^2\|_{L^2(0,1)} \leq C(T).
		\end{equation}
	\end{proof}
	\subsection{Upper Bound Estimates for the Density}
	For the case $1 < \gamma < 2$, the problem can be solved using the same arguments as in the two-dimensional case. However, since the weighted estimates near the origin are insufficient in the three-dimensional case, we employ an alternative approach to establish the result for $1 < \gamma < 3$.
	\begin{proposition}\label{8.1}
		For $1<\gamma<3$, there exists a constant $C > 0$, dependent of $T$, such that
		\begin{equation}
			\sup_{0 \le t \le T} \| \rho \|_{L^\infty(0,R)} \le C(T).
		\end{equation}
	\end{proposition}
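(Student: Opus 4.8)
The plan is to reduce the density upper bound to the single $L^4$ estimate $\sup_{[0,T]}\big(\|u\|_{L^4(0,1)}+\|w\|_{L^4(0,1)}\big)\le C(T)$ and then pass to $\|\rho\|_{L^\infty}$ by Sobolev embedding. Indeed $w-u=2r^2\partial_y\rho$, and since $\partial_r\rho^{1/4}=\tfrac14\rho^{1/4}r^2\partial_y\rho$ while $dy=\rho r^2\,dr$, one has $\int_\Omega|\nabla\rho^{1/4}|^4\,dx=c\int_0^1(r^2\partial_y\rho)^4\,dy\le C\big(\|w\|_{L^4(0,1)}^4+\|u\|_{L^4(0,1)}^4\big)$; combined with $\|\rho^{1/4}\|_{L^4(\Omega)}^4=\|\rho\|_{L^1(\Omega)}\le C$ and the embedding $W^{1,4}(\Omega)\hookrightarrow L^\infty(\Omega)$ (valid since $4>N=3$), this gives $\|\rho\|_{L^\infty(0,R)}=\|\rho^{1/4}\|_{L^\infty(\Omega)}^4\le C(T)$. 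So the whole proof is the $L^4$ bound for the velocity and the effective velocity.

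The reason the two-dimensional argument does not transfer is that the separate $L^4$ bound for $u$ (test the momentum equation by $r^2u^3$ and integrate the pressure by parts) produces the term $\int_0^1\rho^{2\gamma-2}u^2\,dy$, which, dominated via Lemma \ref{lemma 3D} by $\|\rho^{1/2}r^{1/2+\xi}\|_{L^\infty}^{4\gamma-4}\int_0^1\tfrac{u^2}{r^2}\,dy$, closes only for $\gamma<2$. The remedy is to estimate $u$ and $w$ \emph{jointly}: test the Lagrangian momentum equation \eqref{4.4} by $r^2u^3$, test the effective momentum equation $\partial_\tau w+r^2\partial_yP=0$ from \eqref{4.6} by $w^3$, integrate over $(0,1)$, and add; crucially, in both pressure terms I do \emph{not} integrate by parts but substitute $\partial_yP=\gamma\rho^{\gamma-1}e^s\partial_y\rho+\rho^\gamma e^s\partial_ys$ together with $\partial_y\rho=\tfrac{w-u}{2r^2}$. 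This leaves the dissipations $\int\tfrac{u^4}{r^2}\,dy$, $\int\rho^2r^4u^2(\partial_yu)^2\,dy$, $\tfrac\gamma2\int\rho^{\gamma-1}e^sw^4\,dy$, the coupling terms $\int\rho^{\gamma-1}e^suw^3\,dy$ and $\int\rho^{\gamma-1}e^su^4\,dy$, and the entropy source terms $\int\rho^\gamma e^sr^2w^3\partial_ys\,dy$, $\int\rho^\gamma e^sr^2u^3\partial_ys\,dy$. The decisive gain is that the pressure now costs only the power $\rho^{\gamma-1}$, not $\rho^{2\gamma-2}$.

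The key steps, in order: (1) $s$ is transported, so $\partial_\tau s\equiv0$ in Lagrangian coordinates and $\|\partial_ys\|_{L^\infty}=\|\partial_ys_0\|_{L^\infty}=\big\|\tfrac{\partial_rs_0}{\rho_0r^2}\big\|_{L^\infty}\le C$, while $e^{\underline s}\le e^s\le e^{\bar s}$; (2) Proposition \ref{prop5.1}, in particular \eqref{44.99}, gives $\|\rho^{1/2}r^{1/2+\xi}\|_{L^\infty(0,R)}\le C(T)$, hence $\rho^{\gamma-1}\le C(T)\,r^{-(\gamma-1)(1+2\xi)}$ and $dy\le C(T)r^{1-2\xi}\,dr$, and one fixes $\xi$ so small that $(\gamma-1)(1+2\xi)<2$ — which is exactly where the hypothesis $\gamma<3$ enters; (3) Young's inequality term by term: $\int\rho^{\gamma-1}e^suw^3\le\tfrac14\int\rho^{\gamma-1}e^sw^4+\int\rho^{\gamma-1}e^su^4$, the first part absorbed into the $w$-dissipation; every leftover $\int\rho^{\gamma-1}u^4$ (together with the $\tfrac\gamma2\int\rho^{\gamma-1}e^su^4$ coming from the $u$-pressure term) is bounded, using (2) and the elementary pointwise inequality $r^{-a}\le\varepsilon r^{-2}+C_\varepsilon$ (any $\varepsilon>0$, any $0\le a<2$), by $\varepsilon\int\tfrac{u^4}{r^2}\,dy+C(T,\varepsilon)\|u\|_{L^4}^4$, the first absorbed into the $u$-dissipation; the entropy source terms, after a suitable Young split, reduce to $\varepsilon\int\rho^{\gamma-1}w^4+\varepsilon\int\tfrac{u^4}{r^2}$ plus weighted integrals $\int_0^1\rho^{a}r^{b}\,dy$ that are finite for $\gamma<3$ by (2), and one also checks $\int_0^1\rho^{\gamma-1}\,dy\le C(T)$, again for $\gamma<3$. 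Choosing $\varepsilon$ small and collecting terms,
\[
\frac{d}{d\tau}\big(\|u\|_{L^4}^4+\|w\|_{L^4}^4\big)+c\int_0^1\Big(\tfrac{u^4}{r^2}+\rho^{\gamma-1}w^4\Big)\,dy\le C(T)\big(1+\|u\|_{L^4}^4+\|w\|_{L^4}^4\big),
\]
so Gronwall's inequality gives the $L^4$ bounds; step (4) is the Sobolev reduction above.

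I expect step (3) to be the main obstacle: one must verify that the genuinely three-dimensional loss — the heavier weight $r^2$ (versus $r$ in 2D) and the resulting $\rho\lesssim r^{-1-2\xi}$ blow-up near the origin — is absorbable for every $\gamma<3$, and see that this really does force the coupled treatment, since only by leaving the pressure unintegrated and trading $\partial_y\rho$ for $w-u$ does the dangerous density power drop from $2\gamma-2$ to $\gamma-1$. One should also keep all the absorption constants ($C_\varepsilon$, and the Young weights applied to $uw^3$) independent of the integrability exponent, so that the identical computation carried out with $r^2u^{2n-1}$ and $w^{2n-1}$ and sent to $n\to\infty$ will later furnish the $L^\infty$ bounds on $u$ and $w$ needed for the density lower bound (that later step being cleaner, since $\rho$ will by then be bounded).
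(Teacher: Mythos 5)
Your proof is correct, and it takes a genuinely different route from the paper's. The paper's proof of this proposition is a bootstrap: it sets $R_T=\sup_{[0,T]}\|\rho\|_{L^\infty}+1$, runs $L^{2n}$ estimates for $u$ and $w$ (for a carefully chosen $n$, with the pressure term in the $u$-equation \emph{integrated by parts}), extracts factors $R_T^{2n\sigma}$ from each bad term, and closes with a self-referential inequality $R_T^{\beta}\le C(T)R_T^{\sigma}$ for $\beta>\sigma$, after tuning the parameters $n$, $\sigma$, $\beta$, $\xi$ subject to \eqref{44.117}--\eqref{55.23}. You instead run the same $L^4$ + Sobolev scheme the paper uses in 2D (Propositions \ref{6.1} and \ref{7.1}), with one sharpening: in the $u$-equation you do not integrate the pressure term by parts, but substitute $\partial_yP=\gamma\rho^{\gamma-1}e^s\partial_y\rho+\rho^\gamma e^s\partial_ys$ and $\partial_y\rho=\frac{w-u}{2r^2}$ directly, so that the weight $r^2$ from the test function cancels and the dangerous density power in the $\rho$-gradient part of the pressure drops from $\rho^{2\gamma-2}$ (which the IBP plus Young against the viscous dissipation produces, and which only closes for $\gamma<2$ in 3D given $\rho\lesssim r^{-1-2\xi}$) to $\rho^{\gamma-1}$, which closes for all $\gamma<3$ via $r^{-a}\le\varepsilon r^{-2}+C_\varepsilon$ with $a=(\gamma-1)(1+2\xi)<2$. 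The boundary terms in the viscous IBP still vanish ($u|_{r=R}=0$, $r^2|_{r=0}=0$), the extra positive term $\frac{\gamma}{2}\int\rho^{\gamma-1}e^su^4\,dy$ that appears on the wrong side is of the same harmless type, and the entropy source terms are finite weighted integrals because $\gamma<3$ and $\|\partial_ys\|_{L^\infty}=\|\partial_ys_0\|_{L^\infty}\le C$. The resulting argument is cleaner (no bootstrap, no high exponents $2n$ at this stage) and more uniform, since it reproduces the paper's 2D proof verbatim with the single modification of handling the $u$-pressure term via the effective-velocity substitution. What you lose compared to the paper is nothing for the statement at hand; what the paper's bootstrap buys is a demonstration that the argument can close even when one is unwilling or unable to avoid the worse density power, at the cost of considerably more bookkeeping.

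One small clarification: your remark that $\int_0^1\rho^{\gamma-1}\,dy\le C(T)$ "again for $\gamma<3$" is not quite right — that bound comes from the basic energy estimate \eqref{50.1} and holds for every $\gamma>1$; the restriction $\gamma<3$ is used only in the pointwise bound $(\gamma-1)(1+2\xi)<2$ and in the finiteness of the auxiliary weighted integrals $\int_0^1\rho^ar^b\,dy$. This does not affect the correctness of the argument.
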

	\begin{proof}
		Multiplying the third equation of \eqref{4.4} by $r^2u^{2n-1}$($n$ will be determined later) and integrating by parts, we arrive at
		\begin{equation}\label{55.13}
			\begin{aligned}
				\frac{1}{2n}&\frac{d}{d\tau}\int_0^1{u^{2n}}\,dy+\int_0^1{\left( 8\frac{u^{2n}}{r^2}+2(2n-1)\rho ^2r^4(\partial _yu)^2u^{2n-2} \right)}\,dy
				\\
				&\le C\int_0^1{\left( (2n-1)\rho ^{\gamma}e^s r^2u^{2n-2}\partial _yu+2\rho ^{\gamma -1}e^s\frac{u^{2n-1}}{r} \right)}\,dy\triangleq F_1+F_2.
			\end{aligned}
		\end{equation}
		Using Young’s inequality, we obtain 
		\begin{equation}
			F_1\le \int_0^1{\rho ^2r^4(\partial _yu)^2u^{2n-2}+\frac{u^{2n}}{r^2}}\,dy+C\int_0^1{\rho ^{2n\left( \gamma -1 \right)}r^{2\left( n-1 \right)}}\,dy.
		\end{equation}
		$F_2$ can be estimated by 
		\begin{equation}\label{55.15}
			F_2\le \int_0^1{\frac{u^{2n}}{r^2}}\,dy+C\int_0^1{\rho ^{2n\left( \gamma -1 \right)}r^{2\left( n-1 \right)}}\,dy.
		\end{equation}
		Combining \eqref{55.13}-\eqref{55.15}, we get
		\begin{equation}\label{55.16}
			\frac{1}{2n}\frac{d}{d\tau}\int_0^1{u^{2n}}\,dy+\int_0^1{\left( 8\frac{u^{2n}}{r^2}+2(2n-1)\rho ^2r^4(\partial _yu)^2u^{2n-2} \right)}\,dy\le C\int_0^1{\rho ^{2n\left( \gamma -1 \right)}r^{2\left( n-1 \right)}}\,dy.
		\end{equation}
		Let $R_T= \sup_{0 \le t \le T} \|\rho(t)\|_{L^\infty(\Omega)} + 1.$ Fix $n \ge 4$ s.t. 
		\begin{equation}
			\gamma <3-\frac{1}{n},
		\end{equation}
		and $\sigma$ be a constant $0\le \sigma <1-\frac{1}{n}$ satisfy
		\begin{equation}\label{44.117}
			1-\frac{1}{2n}+\sigma <\gamma <2+\sigma.
		\end{equation}
		
		In Euler coordinates, \eqref{55.16} is equivalent to 
		\begin{equation}
			\begin{aligned}
				\mathop {\mathrm{sup}}_{0\le t\le T}&\int_0^R{\rho}u^{2n}r^2\,dr+\int_0^T{\int_0^R{\left( \rho u^{2n}+\rho (\partial _ru)^2u^{2n-2}r^2 \right)}}\,dr\,dt
				\\
				&\le C+C\int_0^T{\int_0^R{\rho ^{2n(\gamma -1)+1}}}r^{2n}\,dr\,dt
				\\
				&\le C+C\mathop {\mathrm{sup}} \limits_{0\le t\le T}\left\| \rho ^{\frac{1}{2}}r^{\frac{1}{2}+\xi} \right\| _{L^{\infty}(0,R)}^{2\left( 2n(\gamma -1)+1-2n\sigma \right)}R_{T}^{2n\sigma}\int_0^T{\int_0^R{r^{2n-2\left( 2n(\gamma -1)+1-2n\sigma \right) (\frac{1}{2}+\xi )}}}\,dr\,dt
				\\
				&\le C(T)R_{T}^{2n\sigma}.
			\end{aligned}
		\end{equation}
		Let $\xi$ small enough s.t. 
		\begin{equation}
			\xi <\frac{2n-2n\left( \gamma -1 \right) +2n\sigma}{2\left[ 2n\left( \gamma -1 \right) +1-2n\sigma \right]}.
		\end{equation}
		The existence of such a $\xi$ is guaranteed by \eqref{44.117}.
		Multiplying the third equation of \eqref{4.6} by $w^{2n-1}$ and integrating over $[0,1]$ gives 
		\begin{equation}\label{66.111}
			\begin{aligned}
				\frac{1}{2n}\frac{d}{d\tau}\int_0^1{w^{2n}}\,dy&+\int_0^1{\gamma \rho ^{\gamma -1}\frac{w^{2n}}{2}e^s}\,dy\\
				&=\int_0^1{\left( \gamma \rho ^{\gamma -1}\frac{u}{2}w^{2n-1}e^s-r^2w^{2n-1}\rho ^{\gamma}\partial _ys \right)}\,dy\\
				&\le \frac{\gamma}{2}\int_0^1{w^{2n}\rho ^{\gamma -1}e^s}\,dy+C\int_0^1{u^{2n}\rho ^{\gamma -1}e^s}\,dy+C\int_0^1{r^{4n}\rho ^{2n+\gamma -1}}\,dy\\
				&\le \frac{\gamma}{2}\int_0^1{w^{2n}\rho ^{\gamma -1}e^s}\,dy+C\int_0^1{u^{2n}\rho ^{\gamma -1}e^s}\,dy\\
				&\quad +C\| \rho ^{\frac{1}{2}}r^{\xi +\frac{1}{2}} \| _{L^{\infty}}^{4n+2\gamma -2}\int_0^1{r^{2n-\gamma +1-\left( 4n+2\gamma -2 \right) \xi}}\,dy.\\
			\end{aligned}
		\end{equation}
		Since $n>\gamma/2$, we have 
		\begin{equation}
			2n-\gamma +1>1,
		\end{equation}
		then let $\xi$ small enough s.t.
		\begin{equation}
			\xi \le \frac{1}{4n+2\gamma -2}.
		\end{equation}
		Therefore, we arrive at 
		\begin{equation}\label{66.114}
			\frac{1}{2n}\frac{d}{d\tau}\int_0^1{w^{2n}}\,dy+\int_0^1{\frac{\gamma}{2}\rho ^{\gamma -1}\frac{w^{2n}}{2}e^s}\,dy\le C\int_0^1{u^{2n}\rho ^{\gamma -1}e^s}\,dy+C\left( T,R \right) .
		\end{equation}
		Integrating \eqref{66.114} and transforming into Euler coordinates, then using $\underline{s} \le s \le \bar{s}$, we obtain
		\begin{equation}
			\begin{aligned}
				&\mathop {\mathrm{sup}} \limits_{0\le t\le T}\int_0^R{\left| \partial _r\rho ^{\frac{1}{2n}} \right|^{2n}}r^2\,dr+\int_0^T{\int_0^R{\rho ^{\gamma}\left( u+2\rho ^{-1}\rho _r \right) ^{2n}r^2}}\,dr\,dt\\
				&\quad \le C\mathop {\mathrm{sup}} \limits_{0\le t\le T}\int_0^R{\rho}u^{2n}r^2\,dr+C\mathop {\mathrm{sup}} \limits_{0\le t\le T}\int_0^R{\rho}\left( u+2\rho ^{-1}\partial _r\rho \right) ^{2n}r^2\,dr\\
				&\quad \quad +\int_0^T{\int_0^R{\rho ^{\gamma}\left( u+2\rho ^{-1}\rho _r \right) ^{2n}r^2}}\,dr\,dt\\
				&\quad \le C(T)R_{T}^{2n\sigma}+C\int_0^T{\int_0^R{\rho ^{\gamma}}}u^{2n}r^2\,dr\,dt\\
				&\quad \le C(T)R_{T}^{2n\sigma}+C\int_0^T{\int_0^R{\rho}}u^{2n}\rho ^{\gamma -1}r^2\,dr\,dt\\
				&\quad \le C(T)R_{T}^{2n\sigma}+C\mathop {\mathrm{sup}} \limits_{0\le t\le T}\| \rho ^{\frac{1}{2}}r^{\frac{1}{2}+\xi} \| _{L^{\infty}(0,R)}^{2\left( \gamma -1 \right)}\int_0^T{\int_0^R{\rho}}u^{2n}\,dr\,dt\\
				&\quad \le C(T)R_{T}^{2n\sigma},\\
			\end{aligned}
		\end{equation}
		taking $\xi$ small enough s.t.
		\begin{equation}
			\xi \le \frac{3-\gamma}{2\left( \gamma -1 \right)}.
		\end{equation}
		The existence of such a $\xi$ is guaranteed by the condition $\gamma < 3$.
		
		Now choosing $\beta$ s.t. 
		\begin{equation}\label{55.23}
			\max\{\frac{1}{2n},\sigma\}< \beta <\left(1 - \frac{1}{n}\right).
		\end{equation}
		Using one-dimensional Sobolev embedding and \eqref{5.58}
		$$\begin{aligned}
			\| \rho ^{\beta}\| _{L^{\infty}(0,R)}&\le C\int_0^R{\rho ^{\beta}}\,dr+C\int_0^R{|}\partial _r\rho ^{\beta}|\,dr\\
			&\le C\int_0^R{(}\rho ^3r^2)^{\frac{\beta}{3}}r^{-\frac{2\beta}{3}}\,dr+C\int_0^R{\left| \partial _r\rho ^{\frac{1}{2n}} \right|}\rho ^{\beta -\frac{1}{2n}}\,dr\\
			&\le C\left( \int_0^R{\rho ^3}r^2\,dr \right) ^{\frac{\beta}{3}}\left( \int_0^R{r^{-\frac{2\beta}{3-\beta}}}\,dr \right) ^{\frac{3-\beta}{3}}\\
			&\quad +C\left( \int_0^R{\left| \partial _r\rho ^{\frac{1}{2n}} \right|^{2n}}r^2\,dr \right) ^{\frac{1}{2n}}\left( \int_0^R{\rho ^{\frac{2n}{2n-1}(\beta -1)+1}}r^{-\frac{2}{2n-1}}\,dr \right) ^{\frac{2n-1}{2n}}\\
			&\le C+C(T)R_{T}^{\sigma}\mathop {\mathrm{sup}} \limits_{0\le t\le T}\left\| \rho ^{\frac{1}{2}}r^{\frac{1}{2}+\xi} \right\| _{L^{\infty}}^{2\left( (\beta -1)+\frac{2n-1}{2n} \right)}\\
			&\quad \cdot \left( \int_0^R{r^{-\frac{2}{2n-1}-\left( \frac{1}{2}+\xi \right) 2\left( \frac{2n}{2n-1}(\beta -1)+1 \right)}}\,dr \right) ^{\frac{2n-1}{2n}}\\
			&\le C(T)R_{T}^{\sigma},\\
		\end{aligned}$$
		where $\xi$ is taken sufficiently small such that
		$$-\frac{2}{2n-1}-\left( \frac{1}{2}+\xi \right) 2\left( \frac{2n}{2n-1}(\beta -1)+1 \right) \ge -1.$$
		Such a $\xi$ exists by virtue of \eqref{55.23}, so we obtain
		$$R_T^\beta \leq C(T).$$
		This establishes the upper bound of the density.
	\end{proof}
	\subsection{$L^{\infty}$ estimates for velocity and effective velocity}
	\begin{proposition}
		There exists a constant $C(T) > 0$ such that
		\begin{equation}\label{888.1}
			\sup_{0\le \tau \le T} \|w(\cdot, \tau)\|_{L^\infty(0,1)} \le C(T),
		\end{equation}
		\begin{equation}\label{888.2}
			\sup_{0\le \tau \le T} \|u(\cdot, \tau)\|_{L^\infty(0,1)} \le C(T),
		\end{equation}
		and
		\begin{equation}\label{888.3}
			\underset{0\le t\le T}{\mathrm{sup}}\left\| r^2\partial _y\rho \right\| _{L^{\infty}\left( 0,1 \right)}\le C\left( T \right) .
		\end{equation}
	\end{proposition}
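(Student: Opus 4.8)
The plan is to mirror the two-dimensional argument of Proposition \ref{pro4.4}, the essential ingredient being that the density upper bound has already been secured in Proposition \ref{8.1} and that the entropy gradient stays controlled, $\|\partial_y s\|_{L^\infty}=\|\partial_y s_0\|_{L^\infty}\le C$, since $s_{y\tau}=0$. Fix $n\in\mathbb{N}$. First I would multiply the effective momentum equation $\partial_\tau w + r^2\partial_y P = 0$ (the third line of \eqref{4.6} with $N=3$) by $w^{2n-1}$ and integrate over $[0,1]$; expanding $\partial_y P = \gamma\rho^{\gamma-1}e^s\partial_y\rho + \rho^\gamma e^s\partial_y s$ and inserting $\partial_y\rho = \tfrac{w-u}{2r^2}$ (from $w=u+2r^2\partial_y\rho$) gives
\begin{equation*}
\frac{1}{2n}\frac{d}{d\tau}\|w\|_{L^{2n}}^{2n} + \frac{\gamma}{2}\int_0^1\rho^{\gamma-1}e^s w^{2n}\,dy = \frac{\gamma}{2}\int_0^1\rho^{\gamma-1}e^s u w^{2n-1}\,dy - \int_0^1\rho^\gamma e^s r^2 w^{2n-1}(\partial_y s)\,dy.
\end{equation*}
Using $e^{\underline s}\le e^s\le e^{\bar s}$, $\|\rho\|_{L^\infty}\le C(T)$, $r\le R$, $\|\partial_y s\|_{L^\infty}\le C$ and Young's inequality, the right-hand side is dominated by $C(T)\big(1+\|w\|_{L^{2n}}^{2n}+\|u\|_{L^{2n}}^{2n}\big)$.

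Second, I would multiply the third equation of \eqref{4.4} by $r^2 u^{2n-1}$ and integrate, exactly as in \eqref{55.13}, producing
\begin{equation*}
\frac{1}{2n}\frac{d}{d\tau}\int_0^1 u^{2n}\,dy + \int_0^1\Big(\tfrac{u^{2n}}{r^2} + (2n-1)\rho^2 r^4(\partial_y u)^2 u^{2n-2}\Big)\,dy \le \int_0^1|\partial_y P|\,r^2|u|^{2n-1}\,dy,
\end{equation*}
and then, splitting $\partial_y P$ and again using $\partial_y\rho=\tfrac{w-u}{2r^2}$ together with the density bound and $\|\partial_y s\|_{L^\infty}\le C$, the right-hand side is likewise controlled by $C(T)\big(1+\|w\|_{L^{2n}}^{2n}+\|u\|_{L^{2n}}^{2n}\big)$. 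Adding the two differential inequalities and setting $\Phi(\tau)=\|w\|_{L^{2n}}^{2n}+\|u\|_{L^{2n}}^{2n}$ yields $\Phi'(\tau)\le 2nC(T)\big(1+\Phi(\tau)\big)$, so Gronwall's inequality gives $\sup_{0\le\tau\le T}\Phi(\tau)\le\big(\Phi(0)+2nTC(T)\big)e^{2nC(T)T}$.

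Finally I would take the $(2n)$-th root and let $n\to\infty$, using $\lim_{n\to\infty}\big(\|f\|_{L^{2n}}^{2n}+\|g\|_{L^{2n}}^{2n}\big)^{1/(2n)}=\max\big(\|f\|_{L^\infty},\|g\|_{L^\infty}\big)$ and noting $(2nTC(T))^{1/(2n)}\to 1$ and $(e^{2nC(T)T})^{1/(2n)}=e^{C(T)T}$, both harmless. This gives $\sup_{0\le t\le T}\max\big(\|w\|_{L^\infty},\|u\|_{L^\infty}\big)\le C(T)\big(\|w_0\|_{L^\infty}+\|u_0\|_{L^\infty}+1\big)$, which is finite because $u_0\in H^3(\Omega)\hookrightarrow L^\infty$ and $w_0=u_0+2r^2\partial_y\rho_0=u_0+2\rho_0^{-1}\partial_r\rho_0$ is bounded under the hypotheses $\rho_0\ge\underline\rho>0$, $\rho_0\in H^3$. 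This establishes \eqref{888.1} and \eqref{888.2}, and \eqref{888.3} follows at once from $r^2\partial_y\rho=\tfrac{w-u}{2}$, whence $\|r^2\partial_y\rho\|_{L^\infty(0,1)}\le\tfrac12\big(\|w\|_{L^\infty}+\|u\|_{L^\infty}\big)\le C(T)$.

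The step I expect to be the crux is, as in two dimensions, that neither $L^{2n}$ estimate closes by itself: the $w$-inequality feeds back a $\|u\|_{L^{2n}}^{2n}$ term and the $u$-inequality a $\|w\|_{L^{2n}}^{2n}$ term, so the coupling of the two a priori unrelated estimates is indispensable, and one must verify that the linear-in-$n$ growth of the Gronwall constant survives extraction of $(2n)$-th roots. The reason this proposition is placed after the density upper bound rather than before is precisely that the bound from Proposition \ref{8.1} is what makes the terms $\rho^{\gamma-1}e^s u w^{2n-1}$ and $\rho^\gamma e^s r^2 w^{2n-1}\partial_y s$ absorbable; without it the right-hand sides could not be closed.
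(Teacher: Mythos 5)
Your proposal is correct and takes essentially the same route as the paper: the paper in fact omits the proof of this proposition entirely, stating only that it is ``similar to that of Proposition \ref{pro4.4},'' and your argument is precisely the faithful $N=3$ transcription of that two-dimensional proof (replacing $w=u+2r\partial_y\rho$ by $w=u+2r^2\partial_y\rho$, the weight $r$ by $r^2$, and invoking the 3D density upper bound from Proposition \ref{8.1}), including the coupled $L^{2n}$ Gronwall argument, the passage $n\to\infty$, and the observation that $w_0$ is bounded under the hypotheses. The only cosmetic discrepancy is in the numerical coefficient in front of $u^{2n}/r^2$ in the dissipation (the paper's \eqref{55.13} displays $8$ where a careful expansion yields $4$; you wrote $1$), but since the dissipation term is simply dropped in this step, this has no bearing on the conclusion.
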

	The proof of this proposition is similar to that of Proposition \ref{pro4.4}, so we omit it.
	\subsection{Lower Bound Estimate for the Density}
	\begin{proposition}\label{Pro5.4}
		There exists a constant $C(T) > 0$ such that
		\begin{equation} \label{44.266}
			\|\rho^{-1}(t)\|_{L^\infty(\Omega)}\le C(T).
		\end{equation}
	\end{proposition}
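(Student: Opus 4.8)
The plan is to run the same argument as in the two-dimensional Proposition~\ref{pro4.5}, adjusting only for the fact that in three dimensions $\partial_y\rho$ blows up like $r^{-2}$ near the origin (rather than $r^{-1}$) while the Jacobian $dy=\rho r^{2}\,dr$ carries an extra power of $r$. First I would set $v=\rho^{-1}$ and record the two ingredients. From the first equation of \eqref{4.4} with $N=3$ one has $v_\tau=\partial_y(r^{2}u)$, and since $r^{2}u$ vanishes at $y=0$ (where $r=0$) and at $y=1$ (where $u=0$ by the boundary condition), integration in $y$ yields the conserved mass
\begin{equation*}
\int_0^1 v(y,\tau)\,dy=\int_0^1 v_0(y)\,dy=:M_0,
\end{equation*}
a finite constant since $\rho_0\ge\underline{\rho}>0$. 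Secondly, \eqref{888.3} gives $\|r^{2}\partial_y\rho\|_{L^\infty(0,1)}\le C(T)$, hence $|\partial_y\rho|\le C(T)\,r^{-2}$; combined with $\partial_y v=-v^{2}\partial_y\rho$ this gives $v^{\beta-1}|\partial_y v|=v^{\beta+1}|\partial_y\rho|\le C(T)\,v^{\beta+1}r^{-2}$ for any $\beta>0$.

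The core step is then an absorption estimate in the exponent. For $0<\beta<1$ I would apply the one-dimensional embedding $W^{1,1}(0,1)\hookrightarrow L^\infty(0,1)$ to $v^\beta$ and split the derivative term by H\"older's inequality with the conjugate pair $(\tfrac54,5)$:
\begin{equation*}
\begin{aligned}
v^\beta(y,\tau)
&\le\int_0^1 v^\beta\,dy+\beta\int_0^1 v^{\beta-1}|\partial_y v|\,dy\\
&\le M_0^{\,\beta}+C(T)\,\beta\,\Big(\int_0^1 v\,r^{-5/2}\,dy\Big)^{4/5}\Big(\int_0^1 v^{5\beta+1}\,dy\Big)^{1/5},
\end{aligned}
\end{equation*}
where the first term was bounded by Jensen's inequality (here $0<\beta<1$ is used). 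Changing variables back to $r$ via $dy=v^{-1}r^{2}\,dr$, the first factor equals $\big(\int_0^R r^{-1/2}\,dr\big)^{4/5}=C(R)$; for the second I would use $v^{5\beta+1}\le\|v\|_{L^\infty}^{5\beta}v$ together with mass conservation to get $\int_0^1 v^{5\beta+1}\,dy\le M_0\,\|\rho^{-1}(t)\|_{L^\infty(\Omega)}^{5\beta}$. This leaves
\begin{equation*}
v^\beta(y,\tau)\le C(\beta)+C(T)\,\beta\,\|\rho^{-1}(t)\|_{L^\infty(\Omega)}^{\beta},
\end{equation*}
and after taking the supremum over $(y,\tau)\in[0,1]\times[0,T]$ and choosing $\beta=\min\{1/(2C(T)),1/2\}$, the last term is absorbed into the left-hand side, which gives \eqref{44.266}.

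The part that needs the most care — though it is not a deep obstacle, since it parallels the two-dimensional case — is the bookkeeping of exponents near $r=0$. The singularity $|\partial_y\rho|\lesssim r^{-2}$ can be absorbed by the three-dimensional weight $r^{2}\,dr$ only if the H\"older exponent on the singular factor is strictly below $\tfrac32$; that slack is exactly what the one-dimensional embedding $W^{1,1}\hookrightarrow L^\infty$ provides, and the choice $\tfrac54$ (hence conjugate $5$) makes $\int_0^R r^{-1/2}\,dr<\infty$. At the same time the conjugate exponent must be tuned so that the power of $\|\rho^{-1}\|_{L^\infty}$ produced is precisely $\beta<1$ — which holds because $(\beta+1-\tfrac45)\cdot 5=5\beta+1$ — and this is what makes the absorption close; were the singularity as strong as $r^{-3}$ the scheme would break down. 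Finally, one should keep in mind that the whole argument rests on \eqref{888.3}, which is obtained (analogously to Proposition~\ref{pro4.4}) from \eqref{888.1}--\eqref{888.2} and therefore ultimately from the density upper bound available for $1<\gamma<3$.
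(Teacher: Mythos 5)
Your proof is correct and follows essentially the same strategy as the paper's (one-dimensional Sobolev embedding on $v^\beta$, H\"older to split the $r^{-2}$ singularity against the Jacobian weight $r^2\,dr$, conservation of $\int v\,dy$, and absorption by choosing $\beta$ small); the only difference is the cosmetic choice of H\"older pair, $(5/4,5)$ versus the paper's $(4/3,4)$, both of which keep the singular exponent below $3/2$ and produce exactly $\|\rho^{-1}\|_{L^\infty}^\beta$ on the right.
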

	\begin{proof}
		Let \(\beta \in (0,1)\) be a parameter to be determined later. For any \((y,\tau) \in (0,1)\times(0, T)\), using the one‑dimensional Sobolev embedding, Hölder's inequality, \eqref{888.3}, we have
		$$
		\begin{aligned}
			v^\beta(y,\tau)&\leq \int_0^1 v^\beta dy+\beta\int_0^1 v^{\beta-1}|\partial_y v|dy\\
			&\leq \left(\int_0^1 v dy\right)^{\beta}+\beta\int_0^1 v^{\beta+1}|\partial_y\rho|dy\\
			&\leq C(\beta)+\beta\|{r^{2}\partial_y\rho}\|_{L^\infty(0,1)}\left(\int_0^1 vr^{-\frac{8}{3}}dy\right)^{\frac{3}{4}}\left(\int_0^1 v^{4\beta+1}dy\right)^{\frac{1}{4}}\\
			&\le C(\beta) + C(T)\beta   \|\rho^{-1}(t)\|_{L^\infty(\Omega)} ^\beta.
		\end{aligned}
		$$
		Taking the supremum over \((y,\tau)\) of the above inequality and setting \(\beta = \min\{\frac{1}{2C(T)}, \frac{1}{2}\}\), we obtain \eqref{44.266}. This completes the proof of Proposition \ref{Pro5.4}.
	\end{proof}
	\section{A priori estimates: Higher-Order estimates}
	\begin{proposition}
		Let $N = 2$ or $N = 3$  and assume that
		\begin{equation}\label{66.1}
			\begin{aligned}
				\mathop {\mathrm{sup}} \limits_{0\le t\le T}&\left( \| \mathbf{u}\| _{L^4} +\| \rho \| _{L^{\infty}}+\| \rho ^{-1}\| _{L^{\infty}}+\| \nabla \rho \| _{L^2}+\| \nabla \rho \| _{L^4} \right) 
				\\
				&+\int_0^T{\left( \| \nabla \rho \| _{L^2}^{2}+\| \nabla \rho \| _{L^4}^{4}+\|\nabla \mathbf{u}\| _{L^2}^{2} \right)}dt\le C(T).
			\end{aligned}
		\end{equation}
		Then there exists a constant $C(T)>0$, such that
		\begin{equation}\label{666.2}
			\sup_{0\le t\le T} \|\nabla \mathbf{u}\|_{L^2}^2 + \int_0^T \int_\Omega \rho |\dot{\mathbf{u}}|^2 \, dx \, dt \le C(T). 
		\end{equation}
	\end{proposition}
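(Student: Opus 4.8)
The plan is to carry out the standard Hoff-type energy estimate: test the momentum equation $\rho\dot{\mathbf u}=2\nabla\cdot(\rho\mathcal D\mathbf u)-\nabla P$ against the material derivative $\dot{\mathbf u}=\mathbf u_t+(\mathbf u\cdot\nabla)\mathbf u$. Since $\mathbf u|_{\partial\Omega}=0$ for all $t$ one has $\dot{\mathbf u}|_{\partial\Omega}=0$, so every boundary term produced by integration by parts below vanishes, and integrating over $\Omega$ gives
\begin{equation*}
\int_\Omega\rho|\dot{\mathbf u}|^2\,dx=2\int_\Omega\nabla\cdot(\rho\mathcal D\mathbf u)\cdot\dot{\mathbf u}\,dx-\int_\Omega\nabla P\cdot\dot{\mathbf u}\,dx.
\end{equation*}
For the viscous term I would integrate by parts, split $\nabla\dot{\mathbf u}=\nabla\mathbf u_t+\nabla(\mathbf u\cdot\nabla\mathbf u)$, use the symmetry of $\mathcal D\mathbf u$ to write the $\mathbf u_t$-part as $-\frac{d}{dt}\int_\Omega\rho|\mathcal D\mathbf u|^2\,dx+\int_\Omega\rho_t|\mathcal D\mathbf u|^2\,dx$, and integrate by parts once more in the convective part; using $\rho_t=-\nabla\cdot(\rho\mathbf u)$ everything collapses to $-\frac{d}{dt}\int_\Omega\rho|\mathcal D\mathbf u|^2\,dx$ plus remainders bounded by $C\int_\Omega\rho|\nabla\mathbf u|^3\,dx+C\int_\Omega|\nabla\rho|\,|\mathbf u|\,|\nabla\mathbf u|^2\,dx$. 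By Korn's inequality and the lower bound $\rho\geq c(T)>0$, the quantity $\int_\Omega\rho|\mathcal D\mathbf u|^2\,dx$ is comparable to $\|\nabla\mathbf u\|_{L^2}^2$.

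For the pressure term I would use $-\int_\Omega\nabla P\cdot\dot{\mathbf u}\,dx=\int_\Omega P\,\mathrm{div}\,\dot{\mathbf u}\,dx$ with $\mathrm{div}\,\dot{\mathbf u}=\partial_t\,\mathrm{div}\,\mathbf u+\nabla\mathbf u:(\nabla\mathbf u)^{T}+\mathbf u\cdot\nabla\,\mathrm{div}\,\mathbf u$, move $\partial_t$ outside the first piece, and invoke the pressure transport equation $P_t+\mathbf u\cdot\nabla P+\gamma P\,\mathrm{div}\,\mathbf u=0$, which follows from the first and third equations of \eqref{1.1} together with $P=\rho^{\gamma}e^{s}$; one further integration by parts on $\mathbf u\cdot\nabla\,\mathrm{div}\,\mathbf u$ then turns the pressure contribution into $\frac{d}{dt}\int_\Omega P\,\mathrm{div}\,\mathbf u\,dx$ plus terms controlled by $C\int_\Omega(|P|+|\mathbf u|\,|\nabla P|)|\nabla\mathbf u|\,dx+C\int_\Omega|P|\,|\nabla\mathbf u|^2\,dx$. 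The crucial new point relative to the isentropic situation of \cite{Huang-Meng-Zhang} is that $\nabla s$ is a priori bounded: in Lagrangian variables $\partial_\tau\partial_y s=0$, so $\|\partial_y s\|_{L^\infty}=\|\partial_y s_0\|_{L^\infty}$, whence $|\nabla s|=\rho r^{N-1}|\partial_y s|\leq C(T)$ from $\|\rho\|_{L^\infty}\leq C(T)$; together with $\underline s\leq s\leq\bar s$ this yields $\|P\|_{L^\infty},\|\nabla P\|_{L^2},\|\nabla P\|_{L^4}\leq C(T)$, so using also $\|\mathbf u\|_{L^4}\leq C(T)$ from \eqref{66.1}, all these entropy/pressure terms are tame, bounded by $C(T)(1+\|\nabla\mathbf u\|_{L^2}^2)$.

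To absorb the remaining quadratic/cubic-gradient remainders I would treat the momentum equation as a Lamé-type elliptic system, $\Delta\mathbf u+\nabla\,\mathrm{div}\,\mathbf u=\dot{\mathbf u}+\rho^{-1}\nabla P-2\rho^{-1}\nabla\rho\cdot\mathcal D\mathbf u$, and apply $H^2$ regularity on the ball with the homogeneous Dirichlet condition; using $\|\rho\|_{L^\infty},\|\rho^{-1}\|_{L^\infty},\|\nabla\rho\|_{L^4}\leq C(T)$ and Gagliardo--Nirenberg one obtains $\|\nabla^2\mathbf u\|_{L^2}^2\leq C(T)\big(\|\rho^{1/2}\dot{\mathbf u}\|_{L^2}^2+(1+g(t))(1+\|\nabla\mathbf u\|_{L^2}^2)\big)$ with $g:=\|\nabla\rho\|_{L^2}^2+\|\nabla\rho\|_{L^4}^4\in L^1(0,T)$. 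Bounding the remainders by Hölder and interpolation, then feeding a small multiple of $\|\nabla^2\mathbf u\|_{L^2}^2$ back through the elliptic bound and keeping a small multiple of $\|\rho^{1/2}\dot{\mathbf u}\|_{L^2}^2$ on the left, one arrives at
\begin{equation*}
\frac{d}{dt}B(t)+\tfrac12\|\rho^{1/2}\dot{\mathbf u}\|_{L^2}^2\leq C(T)\big(1+B(t)\big)\big(1+B(t)+g(t)\big),
\end{equation*}
where $B=\int_\Omega\rho|\mathcal D\mathbf u|^2\,dx+\int_\Omega P\,\mathrm{div}\,\mathbf u\,dx\sim\|\nabla\mathbf u\|_{L^2}^2$. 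Since $\int_0^T B\,dt\leq C(T)$ by \eqref{66.1}, Gronwall's inequality with the integrable coefficient $C(T)(1+B+g)$ gives $\sup_{[0,T]}B\leq C(T)$, and then integrating the differential inequality once more in time yields $\int_0^T\!\!\int_\Omega\rho|\dot{\mathbf u}|^2\,dx\,dt\leq C(T)$, which is \eqref{666.2}.

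\textbf{The main difficulty} will be the cubic term $\int_\Omega\rho|\nabla\mathbf u|^3\,dx$ in the three-dimensional case: there the Gagliardo--Nirenberg exponents only give $\int_\Omega\rho|\nabla\mathbf u|^3\,dx\leq C\|\nabla\mathbf u\|_{L^2}^{3/2}\|\nabla^2\mathbf u\|_{L^2}^{3/2}$, which after Young's inequality produces a superlinear $\|\nabla\mathbf u\|_{L^2}^{6}$ contribution that the bare time-integrability $\int_0^T\|\nabla\mathbf u\|_{L^2}^2\,dt\leq C(T)$ does not absorb (in two dimensions one has instead $\int_\Omega\rho|\nabla\mathbf u|^3\,dx\leq C\|\nabla\mathbf u\|_{L^2}^2\|\nabla^2\mathbf u\|_{L^2}$ and the Gronwall inequality closes directly). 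Closing the estimate in three dimensions therefore requires more care: one exploits the additional lower-order bounds already established in the preceding sections --- in particular $\|\mathbf u\|_{L^\infty}\leq C(T)$ and $\|\nabla\rho\|_{L^\infty}\leq C(T)$ (equivalently $\|r^{N-1}\partial_y\rho\|_{L^\infty}\leq C(T)$), which make the $\nabla\rho$-remainders trivially $\leq C(T)\|\nabla\mathbf u\|_{L^2}^2$ --- and couples the estimate tightly with the dissipation $\int_\Omega\rho|\dot{\mathbf u}|^2\,dx$, together with a short-interval continuation argument if needed, so as to reduce to a Gronwall inequality with a genuinely integrable coefficient. Everything else is routine once the density is trapped between two positive $T$-dependent constants.
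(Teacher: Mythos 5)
Your overall strategy — a Hoff-type energy estimate testing $\rho\dot{\mathbf u}+\nabla P=2\nabla\cdot(\rho\nabla\mathbf u)$ against $\dot{\mathbf u}$, moving a time derivative outside the viscous contribution, and closing via elliptic regularity for $\mathbf u$ plus Gronwall — is the same as the paper's. But there are two points where you deviate, and the second is a genuine gap in your reasoning rather than merely a different route.

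For the pressure term, you propose the long route of rewriting $-\int\nabla P\cdot\dot{\mathbf u}=\int P\,\mathrm{div}\,\dot{\mathbf u}$, pulling a $\partial_t$ outside and invoking $P_t+\mathbf u\cdot\nabla P+\gamma P\,\mathrm{div}\,\mathbf u=0$. The paper is much more direct: since $\nabla P=\gamma\rho^{\gamma-1}e^s\nabla\rho+\rho^\gamma e^s\nabla s$ and both $\|\rho\|_{L^\infty},\|\rho^{-1}\|_{L^\infty}$, $\underline s\le s\le\bar s$, $\|\nabla s\|_{L^\infty}\le C(T)$ (from $s_{y\tau}=0$ in Lagrangian variables, as you note) are controlled, one simply bounds $I_1=-\int\nabla P\cdot\dot{\mathbf u}\le\frac18\|\sqrt\rho\dot{\mathbf u}\|_{L^2}^2+C(T)\|\nabla\rho\|_{L^2}^2+C(T)$ by Cauchy--Schwarz and Young, absorbing the square of $\|\nabla\rho\|_{L^2}$ directly from \eqref{66.1}. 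Your version is not wrong, but it is needless machinery here.

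The more serious issue is your remainder decomposition for the viscous term and the ensuing ``main difficulty'' you claim for $N=3$. You record the remainders as $C\int_\Omega\rho|\nabla\mathbf u|^3\,dx+C\int_\Omega|\nabla\rho|\,|\mathbf u|\,|\nabla\mathbf u|^2\,dx$ and then (correctly, given that decomposition) observe that the pure cubic $\rho|\nabla\mathbf u|^3$ cannot be closed in three dimensions by Gronwall under \eqref{66.1} alone, so you invoke $\|\mathbf u\|_{L^\infty}\le C(T)$ and $\|\nabla\rho\|_{L^\infty}\le C(T)$ as extra ammunition. But the purely cubic-in-$\nabla\mathbf u$ term never needs to appear: after writing $I_2=-\frac{d}{dt}\int_\Omega\rho|\nabla\mathbf u|^2\,dx-\int_\Omega\mathrm{div}(\rho\mathbf u)|\nabla\mathbf u|^2\,dx-2\int_\Omega\rho\nabla\mathbf u:\nabla(\mathbf u\cdot\nabla\mathbf u)\,dx$, one integrates by parts once more in the last two integrals so that a full factor of $\mathbf u$ (not $\nabla\mathbf u$) sits in every remainder, landing on
\begin{equation*}
I_2\le-\frac{d}{dt}\int_\Omega\rho|\nabla\mathbf u|^2\,dx+C(T)\int_\Omega|\nabla\rho|\,|\mathbf u|\,|\nabla\mathbf u|^2\,dx+C(T)\int_\Omega|\mathbf u|\,|\nabla\mathbf u|\,|\nabla^2\mathbf u|\,dx.
\end{equation*}
Now H\"older and the bounds $\|\mathbf u\|_{L^4}\le C(T)$, $\|\nabla\rho\|_{L^4}\le C(T)$ from \eqref{66.1} give $C(T)\|\nabla\mathbf u\|_{L^4}^2+C(T)\|\nabla\mathbf u\|_{L^4}\|\nabla^2\mathbf u\|_{L^2}$, and by Gagliardo--Nirenberg $\|\nabla\mathbf u\|_{L^4}\lesssim\|\nabla\mathbf u\|_{L^2}^{1-N/4}\|\nabla\mathbf u\|_{H^1}^{N/4}$ these become $\|\nabla\mathbf u\|_{L^2}^{(4-N)/2}\|\nabla\mathbf u\|_{H^1}^{N/2}$ and $\|\nabla\mathbf u\|_{L^2}^{(4-N)/4}\|\nabla\mathbf u\|_{H^1}^{(N+4)/4}$. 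For both $N=2$ and $N=3$ the exponent on $\|\nabla\mathbf u\|_{H^1}$ is strictly below $2$, so Young's inequality lets you extract $\varepsilon\|\nabla\mathbf u\|_{H^1}^2$ and a power of $\|\nabla\mathbf u\|_{L^2}^2$, and the $\varepsilon\|\nabla\mathbf u\|_{H^1}^2$ is absorbed via the elliptic bound $\|\mathbf u\|_{H^2}\le C(T)(\|\sqrt\rho\dot{\mathbf u}\|_{L^2}+\|\nabla\rho\|_{L^2}+\|\nabla\mathbf u\|_{L^2}+1)$. The Gronwall then closes with an integrable coefficient in both dimensions, with no appeal to $\|\mathbf u\|_{L^\infty}$ or $\|\nabla\rho\|_{L^\infty}$, which in any case are not part of the hypothesis \eqref{66.1} of the proposition as stated. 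In short: the obstruction you flagged is an artifact of not integrating by parts far enough before estimating, and your proposed remedy reaches outside the stated assumptions to fix a problem the correct decomposition doesn't have.
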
 
	\begin{proof}
		In the radially symmetric setting, we have $\mathcal{D}\mathbf{u} = \nabla \mathbf{u}$. Therefore, we rewrite \eqref{1.1}$_2$ as
		$$\rho\dot{\mathbf{u}} + \nabla P = 2\nabla \cdot (\rho \nabla \mathbf{u})$$
		Multiplying the equation by $\dot{\mathbf{u}}$ and integrating by parts over $\Omega$ yields
		\begin{equation}\label{666.333}
			\int_{\Omega}{\rho}|\dot{\mathbf{u}}|^2\,dx=-\int_{\Omega}{\nabla}P\cdot \dot{\mathbf{u}}\,dx-\int_{\Omega}{2\rho \nabla \mathbf{u}:\nabla \dot{\mathbf{u}}}\,dx\triangleq I_1+I_2.
		\end{equation}
		Since we have 
		$$\|s_y\|_{L^\infty} = \| \frac{1}{\rho r^{N-1}} |\nabla s| \|_{L^\infty},$$
		this implies $\|\nabla s\|_{L^\infty} \le C(T)$.
		Hölder's inequality implies
		\begin{equation}\label{66.23}
			\begin{aligned}
				I_1&\le C(T)\int_{\Omega}{|}\nabla \rho |\sqrt{\rho}|\dot{\mathbf{u}}|\,dx+C(T)\int_{\Omega}{|}\nabla s|\sqrt{\rho}|\dot{\mathbf{u}}|\,dx\\
				&\le \frac{1}{8}\| \sqrt{\rho}\dot{\mathbf{u}}\| _{L^2}^{2}+C(T)\| \nabla \rho \| _{L^2}^{2}+C\left( T \right) .\\
			\end{aligned} 
		\end{equation}
		It follows from \eqref{1.1}$_2$ that $u$ solves the elliptic system
		\begin{equation}\label{66.55}
			\begin{cases}
				\Delta \mathbf{u}= \frac{1}{2\rho} (\rho\dot{\mathbf{u}} + \nabla P - 2\nabla\rho \cdot \nabla \mathbf{u}), \\
				\mathbf{u}|_{\partial\Omega} = 0.
			\end{cases}
		\end{equation}
		Standard $L^2$ estimates for elliptic systems yield
		$$
		\begin{aligned}
			\| \mathbf{u}\|_{H^2} &\le C(T)\left( \| \dot{\mathbf{u}}\|_{L^2} + \| \nabla \rho \|_{L^2} + \| \nabla s\|_{L^2} + \| |\nabla \rho ||\nabla \mathbf{u}|\|_{L^2} \right) \\
			&\le C(T)\left( \| \sqrt{\rho}\dot{\mathbf{u}}\|_{L^2} + \| \nabla \rho \|_{L^2} +1+ \| \nabla \rho \|_{L^4}\| \nabla \mathbf{u}\|_{L^4} \right) \\
			&\le C(T)\left( \| \sqrt{\rho}\dot{\mathbf{u}}\|_{L^2} + \| \nabla \rho \|_{L^2} +1+ \| \nabla \mathbf{u}\|_{L^2}^{\frac{4-N}{4}}\| \nabla \mathbf{u}\|_{H^1}^{\frac{N}{4}} \right) \\
			&\le \frac{1}{2}\| \mathbf{u}\|_{H^2} + C(T)\left( \| \sqrt{\rho}\dot{\mathbf{u}}\|_{L^2} + \| \nabla \rho \|_{L^2} +1+ \| \nabla \mathbf{u}\|_{L^2} \right).
		\end{aligned}$$
		Therefore, we arrive at
		\begin{equation}\label{66.2}
			\|\mathbf{u}\|_{H^2} \le C(T)\left( \|\sqrt{\rho}\dot{\mathbf{u}}\|_{L^2} + \|\nabla\rho\|_{L^2} + \|\nabla \mathbf{u}\|_{L^2}+1 \right).
		\end{equation}
		Using \eqref{66.1} and \eqref{66.2}, we get
		\begin{equation}\label{66.5}
			\begin{aligned}
				I_2 &= -\int_{\Omega} 2\rho \nabla \mathbf{u} : \nabla \mathbf{u}_t \, dx - \int_{\Omega} 2\rho \nabla \mathbf{u} : \nabla (\mathbf{u} \cdot \nabla \mathbf{u}) \, dx \\
				&= -\frac{d}{dt} \int_{\Omega} \rho |\nabla \mathbf{u}|^2 \, dx + \int_{\Omega} \partial_t \rho |\nabla \mathbf{u}|^2 \, dx - \int_{\Omega} 2\rho \nabla \mathbf{u} : \nabla (\mathbf{u} \cdot \nabla \mathbf{u}) \, dx \\
				&= -\frac{d}{dt} \int_{\Omega} \rho |\nabla \mathbf{u}|^2 \, dx - \int_{\Omega} \mathrm{div}(\rho \mathbf{u}) |\nabla \mathbf{u}|^2 \, dx - \int_{\Omega} 2\rho \nabla \mathbf{u} : \nabla (\mathbf{u} \cdot \nabla \mathbf{u}) \, dx \\
				&\le -\frac{d}{dt} \int_{\Omega} \rho |\nabla \mathbf{u}|^2 \, dx + C(T) \int_{\Omega} |\nabla \rho| |\mathbf{u}| |\nabla \mathbf{u}|^2 \, dx + C(T) \int_{\Omega} |\mathbf{u}| |\nabla \mathbf{u}| |\nabla^2 \mathbf{u}| \, dx \\
				&\le -\frac{d}{dt} \int_{\Omega} \rho |\nabla \mathbf{u}|^2 \, dx + C(T) \| \nabla \rho \|_{L^4} \| \mathbf{u} \|_{L^4} \| \nabla \mathbf{u} \|_{L^4}^2 \\
				&\quad + C(T) \| \mathbf{u} \|_{L^4} \| \nabla^2 \mathbf{u} \|_{L^2} \| \nabla \mathbf{u} \|_{L^4} \\
				&\le -\frac{d}{dt} \int_{\Omega} \rho |\nabla \mathbf{u}|^2 \, dx + C(T) \| \nabla \mathbf{u} \|_{L^4}^2 + C(T) \| \nabla^2 \mathbf{u} \|_{L^2} \| \nabla \mathbf{u} \|_{L^4} \\
				&\le -\frac{d}{dt} \int_{\Omega} \rho |\nabla \mathbf{u}|^2 \, dx + C(T) \| \nabla \mathbf{u} \|_{L^2}^{\frac{4-N}{2}} \| \nabla \mathbf{u} \|_{H^1}^{\frac{N}{2}} + C(T) \| \nabla \mathbf{u} \|_{L^2}^{\frac{4-N}{4}} \| \nabla \mathbf{u} \|_{H^1}^{\frac{N+4}{4}} \\
				&\le -\frac{d}{dt} \int_{\Omega} \rho |\nabla \mathbf{u}|^2 \, dx + \frac{1}{8} \| \sqrt{\rho} \dot{\mathbf{u}} \|_{L^2}^2 + C(T) \| \nabla \rho \|_{L^2}^2 + C(T) \| \nabla \mathbf{u} \|_{L^2}^2+C(T).
			\end{aligned}
		\end{equation}  
		Substituting \eqref{66.23} and \eqref{66.5} into
		\eqref{666.333} and integrating with respect to time $t$, we arrive at \eqref{666.2}.
	\end{proof}
	\begin{proposition}
		Under the assumption \eqref{66.1}, there exists a constant $C(T)$, such that
		\begin{equation}\label{666.7}
			\sup_{0 \le t \le T} \left( \|\sqrt{\rho}\dot{\mathbf{u}}\|_{L^2}^2 + \|\mathbf{u}\|_{H^2}^2 \right) + \int_0^T \int_\Omega |\nabla\dot{\mathbf{u}}|^2 \, dx \, dt \le C(T)
		\end{equation}
	\end{proposition}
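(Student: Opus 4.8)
The plan is to run the second layer of a priori estimates in the Hoff / Huang--Li framework, i.e.\ the material-derivative estimate for the momentum equation, adapted here to the variable viscosity $\mu(\rho)=2\rho$ and to the transported entropy. Write $\dot f=f_t+\mathbf{u}\cdot\nabla f$. Applying the operator $\partial_t(\cdot)+\mathrm{div}(\mathbf{u}\,(\cdot))$ to $\rho\dot{\mathbf{u}}+\nabla P=2\nabla\cdot(\rho\nabla\mathbf{u})$ (recall $\mathcal D\mathbf{u}=\nabla\mathbf{u}$ in the radial setting) and pairing with $\dot{\mathbf{u}}$, the continuity equation turns the left-hand side into $\frac{1}{2}\frac{d}{dt}\int_\Omega\rho|\dot{\mathbf{u}}|^2\,dx$, while integration by parts on the viscous part produces the dissipation $2\int_\Omega\rho|\nabla\dot{\mathbf{u}}|^2\,dx$ together with commutator terms that are cubic in $\nabla\mathbf{u}$ and terms of the type $\int_\Omega\rho|\dot{\mathbf{u}}|\,|\nabla^2\mathbf{u}|\,|\nabla\mathbf{u}|\,dx$; all boundary integrals vanish since $\mathbf{u}|_{\partial\Omega}=0$ and hence $\dot{\mathbf{u}}|_{\partial\Omega}=0$. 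For the pressure contribution the key observation is that $P=\rho^\gamma e^s$ together with $\dot\rho=-\rho\,\mathrm{div}\,\mathbf{u}$ and $\dot s=0$ gives the transport identity $\dot P+\gamma P\,\mathrm{div}\,\mathbf{u}=0$, so (no second derivatives of $s$ enter) the pressure terms are controlled by $\|P\|_{L^\infty}$, $\|\nabla P\|_{L^2}\le C(T)(\|\nabla\rho\|_{L^2}+\|\nabla s\|_{L^2})$ and $\|\mathbf{u}\|_{L^\infty}$, all of which are $\le C(T)$ by \eqref{66.1}, the lower-order $L^\infty$ bound on $\mathbf{u}$, and the bound $\|\nabla s\|_{L^\infty}\le C(T)$ already established; hence the pressure contribution is absorbed into $\varepsilon\|\nabla\dot{\mathbf{u}}\|_{L^2}^2+C(T)$.

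For the commutator terms I would use H\"older's inequality together with the Gagliardo--Nirenberg inequalities $\|\nabla\mathbf{u}\|_{L^4}\lesssim\|\nabla\mathbf{u}\|_{L^2}^{1-N/4}\|\mathbf{u}\|_{H^2}^{N/4}$ and $\|\dot{\mathbf{u}}\|_{L^4}\lesssim\|\sqrt\rho\dot{\mathbf{u}}\|_{L^2}^{1-N/4}\|\nabla\dot{\mathbf{u}}\|_{L^2}^{N/4}$ (valid since $\rho^{-1}$ is bounded and $\dot{\mathbf{u}}|_{\partial\Omega}=0$), and then substitute the elliptic estimate \eqref{66.2}, which under \eqref{66.1} and \eqref{666.2} reads $\|\mathbf{u}\|_{H^2}\le C(T)(\|\sqrt\rho\dot{\mathbf{u}}\|_{L^2}+1)$; where convenient one also replaces $\partial_i(\rho\partial_i u^j)$ by $\frac12(\rho\dot u^j+\partial_j P)$ via the momentum equation. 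Using $\rho\ge C(T)^{-1}$ to bound $\int_\Omega\rho|\nabla\dot{\mathbf{u}}|^2\,dx\ge C(T)^{-1}\|\nabla\dot{\mathbf{u}}\|_{L^2}^2$ and absorbing all resulting $\varepsilon\|\nabla\dot{\mathbf{u}}\|_{L^2}^2$ pieces, one is left with a differential inequality of the form
\[
\frac{d}{dt}\|\sqrt\rho\dot{\mathbf{u}}\|_{L^2}^2+C(T)^{-1}\|\nabla\dot{\mathbf{u}}\|_{L^2}^2\le C(T)\Big(1+\sum_j \|\sqrt\rho\dot{\mathbf{u}}\|_{L^2}^{q_j}\Big),
\]
where each exponent $q_j$ depends only on $N$ and satisfies $q_j\le 4$ (e.g.\ $q_j=3$ from the term $\|\nabla\mathbf{u}\|_{L^4}^2\|\nabla\dot{\mathbf{u}}\|_{L^2}$ when $N=3$; for $N=2$ the mixed terms already force some $q_j>2$ as well).

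The main obstacle is closing this inequality: its right-hand side is superquadratic in $y(t):=\|\sqrt\rho\dot{\mathbf{u}}(t)\|_{L^2}^2$, so a naive Gronwall argument would a priori allow $y$ to blow up in finite time. The resolution is to exploit the time-integrated bound \eqref{666.2} from the previous proposition: since $\int_0^T\|\sqrt\rho\dot{\mathbf{u}}\|_{L^2}^2\,dt\le C(T)$, the function $t\mapsto\|\sqrt\rho\dot{\mathbf{u}}(t)\|_{L^2}$ lies in $L^2(0,T)$, and therefore every power $\|\sqrt\rho\dot{\mathbf{u}}\|_{L^2}^{a}$ with $0\le a\le 2$ lies in $L^1(0,T)$. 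Writing each $q_j\le 2$ via $\|\sqrt\rho\dot{\mathbf{u}}\|_{L^2}^{q_j}\le 1+\|\sqrt\rho\dot{\mathbf{u}}\|_{L^2}^2$, and each $q_j\in(2,4]$ as $\|\sqrt\rho\dot{\mathbf{u}}\|_{L^2}^{q_j}=\|\sqrt\rho\dot{\mathbf{u}}\|_{L^2}^{2}\cdot\|\sqrt\rho\dot{\mathbf{u}}\|_{L^2}^{q_j-2}$ with $q_j-2\le 2$, I would recast the right-hand side as $\big(C(T)+g(t)\big)\,y(t)+C(T)$ with $g\in L^1(0,T)$, so the inequality becomes linear in $y$. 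Since $y(0)=\|\sqrt{\rho_0}\,\dot{\mathbf{u}}(0)\|_{L^2}^2\le C$ is finite (computed from the momentum equation at $t=0$ using $\mathbf{u}_0\in H^3$), Gronwall's inequality gives $\sup_{[0,T]}y\le C(T)$, and integrating the dissipation term in time yields $\int_0^T\|\nabla\dot{\mathbf{u}}\|_{L^2}^2\,dt\le C(T)$. Finally, inserting $\sup_{[0,T]}\|\sqrt\rho\dot{\mathbf{u}}\|_{L^2}^2\le C(T)$ back into \eqref{66.2} gives $\sup_{[0,T]}\|\mathbf{u}\|_{H^2}^2\le C(T)$, which together with the two previous bounds is exactly \eqref{666.7}.
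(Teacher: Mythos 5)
Your proposal is correct and follows essentially the same route as the paper: apply $\dot{\mathbf{u}}^j\bigl[\partial_t(\cdot)+\nabla\cdot(\mathbf{u}\,\cdot)\bigr]$ to the momentum equation, use the transport identity $P_t+\mathbf{u}\cdot\nabla P+\gamma P\,\mathrm{div}\,\mathbf{u}=0$ for the pressure part, control the commutator terms by H\"older, Gagliardo--Nirenberg, and the elliptic estimates \eqref{66.2}, \eqref{666.11}, and finally use the $L^1$-in-time bound on $\|\sqrt{\rho}\dot{\mathbf{u}}\|_{L^2}^2$ from \eqref{666.2} to linearize the quartic term $\|\sqrt{\rho}\dot{\mathbf{u}}\|_{L^2}^4$ so Gronwall closes. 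Your spelling-out of why the superquadratic right-hand side is harmless is in fact slightly more explicit than the paper's, which simply invokes Gronwall with \eqref{666.2} and \eqref{66.2}.
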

	\begin{proof}
		Applying $\dot{\mathbf{u}}^j \left[ \partial/\partial t + \nabla \cdot (\mathbf{u} \cdot) \right]$ to \eqref{1.1}$_2$ and integrating by parts yields
		\begin{equation}\label{66.68}
			\begin{aligned}
				\left( \frac{1}{2}\int_{\Omega}\rho|\dot{\mathbf{u}}|^2\,dx \right)_t &= -\int_{\Omega}\dot{\mathbf{u}}^j[\partial_j P_t + \nabla \cdot (\mathbf{u}\partial_j P)]\,dx \\
				&\quad + \int_{\Omega}\dot{\mathbf{u}}^j[\partial_t\partial_i(2\rho \partial_i \mathbf{u}^j) + \nabla \cdot (\mathbf{u}\partial_i(2\rho \partial_i \mathbf{u}^j))]\,dx \\
				&\triangleq \sum_{i=1}^2 N_i.
			\end{aligned}
		\end{equation}
		
		Using integration by parts and the equation
		$$
		P_t + \mathbf{u} \cdot \nabla P + \gamma P\nabla \cdot\mathbf{u} = 0,
		$$
		$N_1$ can be estimated by
		\begin{equation}\label{66.69}
			\begin{aligned}
				N_1 &= -\int_{\Omega}\dot{\mathbf{u}}^j[\partial_j P_t + \operatorname{div}(\partial_j P\mathbf{u})]\,dx \\
				&= \int_{\Omega} \left[ -\partial_j \dot{\mathbf{u}}^j \mathbf{u}\cdot \nabla P - \gamma \partial_j \dot{\mathbf{u}}^j P\nabla \cdot \mathbf{u} - P\partial_j (\partial_k \dot{\mathbf{u}}^j \mathbf{u}^k) \right] \,dx \\
				&= \int_{\Omega} \left[ \partial_k \left( \partial_j \dot{\mathbf{u}}^j \mathbf{u}^k \right) P - \gamma \partial_j \dot{\mathbf{u}}^j P\nabla \cdot \mathbf{u} - P\partial_j (\partial_k \dot{\mathbf{u}}^j \mathbf{u}^k) \right] \,dx \\
				&\le C(T) \| \nabla \mathbf{u} \|_{L^2} \| \nabla \dot{\mathbf{u}} \|_{L^2} \\
				&\le \varepsilon \| \nabla \dot{\mathbf{u}} \|_{L^2}^{2} + C(\varepsilon, T) \| \nabla \mathbf{u} \|_{L^2}^{2}.
			\end{aligned}
		\end{equation}
		Integrating by parts yields
		\begin{equation}
			\begin{aligned}
				N_2&=\int_{\Omega}{\mathbf{\dot{u}}^j}\left[ \partial _t\partial _i(2\rho \partial _i\mathbf{u}^j)+\nabla \cdot (\mathbf{u}\partial _i(2\rho \partial _i\mathbf{u}^j)) \right] \,dx\\
				&=-\int_{\Omega}{\partial _i}\mathbf{\dot{u}}^j\partial _t(2\rho \partial _i\mathbf{u}^j)\,dx-\int_{\Omega}{\partial _k}\mathbf{\dot{u}}^j\mathbf{u}^k\partial _i(2\rho \partial _i\mathbf{u}^j)\,dx\\
				&=-2\int_{\Omega}{\rho}|\nabla \mathbf{\dot{u}}|^2\,dx+2\int_{\Omega}{\rho}\partial _i\mathbf{\dot{u}}^j\partial _i(\mathbf{u}^k\partial _k\mathbf{u}^j)\,dx\\
				&\quad -2\int_{\Omega}{\partial _i}\mathbf{\dot{u}}^j\partial _t\rho \partial _i\mathbf{u}^j\,dx-2\int_{\Omega}{\partial _k}\mathbf{\dot{u}}^j\mathbf{u}^k\partial _i\rho \partial _i\mathbf{u}^j\,dx\\
				&\quad -2\int_{\Omega}{\partial _k}\mathbf{\dot{u}}^j\mathbf{u}^k\rho \partial _{ii}\mathbf{u}^j\,dx\\
				&\le -2\int_{\Omega}{\rho}|\nabla \mathbf{\dot{u}}|^2\,dx+C(T)\int_{\Omega}{\left( |\nabla \mathbf{\dot{u}}||\nabla \mathbf{u}|^2+|\nabla \mathbf{\dot{u}}||\mathbf{u}||\nabla ^2\mathbf{u}|+|\nabla \mathbf{\dot{u}}||\mathbf{u}||\nabla \rho ||\nabla \mathbf{u}| \right)}\,dx\\
				&\le -2\int_{\Omega}{\rho}|\nabla \mathbf{\dot{u}}|^2\,dx+\varepsilon \int_{\Omega}{|}\nabla \mathbf{\dot{u}}|^2\,dx\\
				&\quad +C(\varepsilon ,T)\int_{\Omega}{\left( |\mathbf{u}|^2|\nabla ^2\mathbf{u}|^2+|\nabla \mathbf{u}|^4+|\nabla \rho |^2|\mathbf{u}|^2|\nabla \mathbf{u}|^2 \right)}\,dx.\\
			\end{aligned}
		\end{equation}
		Standard $L^4$ estimates for elliptic systems yield
		\begin{equation}\label{666.11}
			\begin{aligned}
				\| \mathbf{u} \|_{W^{2,4}} &\le C(T)\left( \| \dot{\mathbf{u}} \|_{L^4} + \| \nabla \rho \|_{L^4} + \| \nabla s \|_{L^4} + \| |\nabla \rho ||\nabla \mathbf{u}| \|_{L^4} \right) \\
				&\le C(T)\left( \| \sqrt{\rho}\dot{\mathbf{u}} \|_{L^4} + \| \nabla \rho \|_{L^4} + 1 + \| \nabla \rho \|_{L^4} \| \nabla \mathbf{u} \|_{L^{\infty}} \right) \\
				&\le C(T)\left( \| \sqrt{\rho}\dot{\mathbf{u}} \|_{L^4} + \| \nabla \rho \|_{L^4} + 1 + \| \nabla \mathbf{u} \|_{L^2}^{\frac{4-N}{N+4}} \| \nabla \mathbf{u} \|_{W^{1,4}}^{\frac{2N}{N+4}} \right) \\
				&\le \frac{1}{2} \| \mathbf{u} \|_{W^{2,4}} + C(T)\left( \| \sqrt{\rho}\dot{\mathbf{u}} \|_{L^4} + \| \nabla \rho \|_{L^4} + 1 + \| \nabla \mathbf{u} \|_{L^2} \right),
			\end{aligned}
		\end{equation}
		so we get $\| \mathbf{u} \|_{W^{2,4}} \le C(T)\left( \| \sqrt{\rho}\dot{\mathbf{u}} \|_{L^4} + \| \nabla \rho \|_{L^4} + 1 + \| \nabla \mathbf{u} \|_{L^2} \right).$ Based on the $L^4$ estimates, we obtain 
		\begin{equation}
			\begin{aligned}
				C(\varepsilon, T)\int_{\Omega} |\mathbf{u}|^2 |\nabla^2\mathbf{u}|^2 \,dx 
				&\le C(\varepsilon, T) \| \mathbf{u} \|_{L^4}^2 \| \nabla^2\mathbf{u} \|_{L^4}^2 \\
				&\le C(\varepsilon, T) \| \mathbf{u} \|_{L^4}^2 \| \mathbf{u} \|_{W^{2,4}}^2 \\
				&\le C(\varepsilon, T) \| \mathbf{u} \|_{L^4}^2 \left( \| \sqrt{\rho}\dot{\mathbf{u}} \|_{L^4}^2 + \| \nabla \rho \|_{L^4}^2 + \| \nabla \mathbf{u} \|_{L^2}^2 + 1 \right) \\
				&\le C(\varepsilon, T) \left( \| \sqrt{\rho}\dot{\mathbf{u}} \|_{L^2}^{\frac{4-N}{2}} \| \nabla \dot{\mathbf{u}} \|_{L^2}^{\frac{N}{2}} + \| \nabla \rho \|_{L^4}^2 + \| \nabla \mathbf{u} \|_{L^2}^2 + 1 \right) \\
				&\le \varepsilon \| \nabla \dot{\mathbf{u}} \|_{L^2}^2 + C(\varepsilon, T) \left( \| \sqrt{\rho}\dot{\mathbf{u}} \|_{L^2}^2 + \| \nabla \rho \|_{L^4}^2 + \| \nabla \mathbf{u} \|_{L^2}^2 + 1 \right).
			\end{aligned}
		\end{equation}
		$C(\varepsilon ,T)\int_{\Omega}{\left( |\nabla \mathbf{u}|^4 \right)}\,dx$ can be estimated by 
		\begin{equation}
			\begin{aligned}
				C(\varepsilon, T)\int_{\Omega} |\nabla \mathbf{u}|^4 \,dx 
				&\le C(\varepsilon, T) \| \nabla \mathbf{u} \|_{H^1}^4 \\
				&\le C(\varepsilon, T) \left( \| \sqrt{\rho}\dot{\mathbf{u}} \|_{L^2}^4 + \| \nabla \rho \|_{L^2}^4 + \| \nabla \mathbf{u} \|_{L^2}^4 \right) \\
				&\le C(\varepsilon, T) \left( \| \sqrt{\rho}\dot{\mathbf{u}} \|_{L^2}^4 + \| \nabla \rho \|_{L^2}^2 + \| \nabla \mathbf{u} \|_{L^2}^2 \right),
			\end{aligned}
		\end{equation}
		and 
		\begin{equation}
			\begin{aligned}
				C(\varepsilon, T)\int_{\Omega} |\nabla \rho|^2 |\mathbf{u}|^2 |\nabla \mathbf{u}|^2 \,dx 
				&\le C(\varepsilon, T) \| \nabla \rho \|_{L^4}^2 \| \mathbf{u} \|_{L^4}^2 \| \nabla \mathbf{u} \|_{L^{\infty}}^2 \\
				&\le C(\varepsilon, T) \| \nabla \mathbf{u} \|_{W^{1,4}}^2 \\
				&\le C(\varepsilon, T) \left( \| \sqrt{\rho}\dot{\mathbf{u}} \|_{L^2}^{\frac{4-N}{2}} \| \nabla \dot{\mathbf{u}} \|_{L^2}^{\frac{N}{2}} + \| \nabla \rho \|_{L^4}^2 + \| \nabla \mathbf{u} \|_{L^2}^2 + 1 \right) \\
				&\le \varepsilon \| \nabla \dot{\mathbf{u}} \|_{L^2}^2 + C(\varepsilon, T) \left( \| \sqrt{\rho}\dot{\mathbf{u}} \|_{L^2}^2 + \| \nabla \rho \|_{L^4}^2 + \| \nabla \mathbf{u} \|_{L^2}^2 + 1 \right).
			\end{aligned}
		\end{equation}
		Therefore, $N_2$ can be estimated by 
		\begin{equation}\label{66.14}
			\begin{aligned}
				N_2 &\le -2\int_{\Omega} \rho |\nabla \dot{\mathbf{u}}|^2 \,dx + 3\varepsilon \| \nabla \dot{\mathbf{u}} \|_{L^2}^2 + C(\varepsilon, T) \| \sqrt{\rho}\dot{\mathbf{u}} \|_{L^2}^4 \\
				&\quad + C(\varepsilon, T)\left( \| \sqrt{\rho}\dot{\mathbf{u}} \|_{L^2}^2 + \| \nabla \rho \|_{L^4}^2 + \| \nabla \mathbf{u} \|_{L^2}^2 + 1 \right).
			\end{aligned}
		\end{equation}
		Combining \eqref{66.68}, \eqref{66.69} and \eqref{66.14}, we arrive at 
		\begin{equation}
			\begin{aligned}
				\left( \frac{1}{2}\int_{\Omega} \rho |\dot{\mathbf{u}}|^2 \,dx \right)_t &+ 2C(T)^{-1} \int_{\Omega} |\nabla \dot{\mathbf{u}}|^2 \,dx - 4\varepsilon \| \nabla \dot{\mathbf{u}} \|_{L^2}^2 \\
				&\le \left( \frac{1}{2}\int_{\Omega} \rho |\dot{\mathbf{u}}|^2 \,dx \right)_t + 2\int_{\Omega} \rho |\nabla \dot{\mathbf{u}}|^2 \,dx - 4\varepsilon \| \nabla \dot{\mathbf{u}} \|_{L^2}^2 \\
				&\le C(\varepsilon, T) \left( \| \sqrt{\rho}\dot{\mathbf{u}} \|_{L^2}^2 + \| \nabla \rho \|_{L^4}^2 + \| \nabla \mathbf{u} \|_{L^2}^2 + 1 \right) \\
				&\quad + C(\varepsilon, T) \| \sqrt{\rho}\dot{\mathbf{u}} \|_{L^2}^4.
			\end{aligned}
		\end{equation}
		Let $\varepsilon =\frac{C\left( T \right) ^{-1}}{4}$, then using Gronwall's inequality with \eqref{666.2} and \eqref{66.2}, we arrive at \eqref{666.7}.
	\end{proof}
	\begin{proposition}
		Assume that \eqref{66.1} holds. Then there exists a constant $C(T) > 0$ such that
		\begin{equation}\label{666.118}
			\sup_{0 \le t \le T} (\|\rho\|_{H^2}^2+\|s\|_{H^2}^2) + \int_0^T \|\mathbf{u}\|_{H^3}^2 \, dt \le C(T).
		\end{equation}
	\end{proposition}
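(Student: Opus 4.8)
The plan is to combine elliptic regularity for the velocity with transport estimates for $\nabla^2\rho$ and $\nabla^2 s$, and to close everything by a single Gronwall argument whose time coefficients are made integrable by the bound $\int_0^T\|\nabla\dot{\mathbf{u}}\|_{L^2}^2\,dt\le C(T)$ from \eqref{666.7}.

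\textbf{Step 1 (preparatory bounds).} Since $\dot{\mathbf{u}}|_{\partial\Omega}=0$, writing $\nabla(\sqrt{\rho}\dot{\mathbf{u}})=\sqrt{\rho}\,\nabla\dot{\mathbf{u}}+\frac{\nabla\rho}{2\sqrt{\rho}}\dot{\mathbf{u}}$ and using \eqref{66.1} and \eqref{666.7} together with the Gagliardo--Nirenberg--Poincar\'e inequality, I would get $\|\sqrt{\rho}\dot{\mathbf{u}}\|_{L^4}\le C(T)(1+\|\nabla\dot{\mathbf{u}}\|_{L^2})$; inserting this into the $W^{2,4}$ elliptic estimate \eqref{666.11} gives
$$\|\mathbf{u}\|_{W^{2,4}}^2+\|\nabla\mathbf{u}\|_{L^\infty}^2\le C(T)\big(1+\|\nabla\dot{\mathbf{u}}\|_{L^2}^2\big)\in L^1(0,T).$$
From $s_{y\tau}=0$ one has $\|\nabla s\|_{L^\infty}\le C(T)$, and $\|\dot{\mathbf{u}}\|_{L^2}\le C(T)\|\sqrt{\rho}\dot{\mathbf{u}}\|_{L^2}\le C(T)$. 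Finally, applying $H^1$ elliptic estimates to \eqref{66.55} and using $P=\rho^\gamma e^s$, \eqref{66.1} and $\|\nabla s\|_{L^\infty}\le C(T)$, I would obtain
$$\|\mathbf{u}\|_{H^3}\le C(T)\big(1+\|\nabla\dot{\mathbf{u}}\|_{L^2}+\|\nabla^2\rho\|_{L^2}+\|\nabla^2 s\|_{L^2}+\|\nabla\mathbf{u}\|_{L^\infty}\|\nabla^2\rho\|_{L^2}+\|\mathbf{u}\|_{W^{2,4}}\big).$$

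\textbf{Step 2 (second derivatives of $\rho$ and $s$).} Apply $\partial_i\partial_j$ to the continuity equation $\rho_t+\mathbf{u}\cdot\nabla\rho+\rho\,\dv\,\mathbf{u}=0$, multiply by $\partial_i\partial_j\rho$ and integrate; the transport term yields $-\frac12\int(\dv\,\mathbf{u})|\nabla^2\rho|^2$, and the commutator together with $\partial_i\partial_j(\rho\,\dv\,\mathbf{u})$ is controlled, via \eqref{66.1} and Step 1, by $C\|\nabla\mathbf{u}\|_{L^\infty}\|\nabla^2\rho\|_{L^2}^2+C(T)(\|\mathbf{u}\|_{W^{2,4}}+\|\nabla^3\mathbf{u}\|_{L^2})\|\nabla^2\rho\|_{L^2}$. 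Doing the same for $s_t+\mathbf{u}\cdot\nabla s=0$, multiplying by $\partial_i\partial_j s$ and using $\|\nabla s\|_{L^\infty}\le C(T)$ and $\|\nabla^2\mathbf{u}\|_{L^2}\le C(T)$ from \eqref{666.7}, gives
$$\tfrac{d}{dt}\|\nabla^2 s\|_{L^2}^2\le C\|\nabla\mathbf{u}\|_{L^\infty}\|\nabla^2 s\|_{L^2}^2+C(T)\big(1+\|\nabla^2 s\|_{L^2}^2\big).$$
Adding the two inequalities, substituting the $H^3$ bound of Step 1 for $\|\nabla^3\mathbf{u}\|_{L^2}$, and applying Young's inequality, I arrive at
$$\tfrac{d}{dt}\big(\|\nabla^2\rho\|_{L^2}^2+\|\nabla^2 s\|_{L^2}^2\big)\le C(T)g(t)\big(1+\|\nabla^2\rho\|_{L^2}^2+\|\nabla^2 s\|_{L^2}^2\big)+C(T)h(t),$$
with $g(t)=1+\|\nabla\mathbf{u}\|_{L^\infty}$ and $h(t)=1+\|\nabla\dot{\mathbf{u}}\|_{L^2}^2+\|\mathbf{u}\|_{W^{2,4}}^2$, both in $L^1(0,T)$ by Step 1 and \eqref{666.7}. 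Gronwall then gives $\sup_{0\le t\le T}(\|\nabla^2\rho\|_{L^2}^2+\|\nabla^2 s\|_{L^2}^2)\le C(T)$, hence, with \eqref{66.1}, $\sup_t(\|\rho\|_{H^2}^2+\|s\|_{H^2}^2)\le C(T)$.

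\textbf{Step 3 (conclusion) and main obstacle.} Squaring the $H^3$ estimate of Step 1 and integrating over $[0,T]$, the right-hand side is bounded by $\int_0^T\|\nabla\dot{\mathbf{u}}\|_{L^2}^2\,dt$, the sup-bounds on $\|\nabla^2\rho\|_{L^2}$ and $\|\nabla^2 s\|_{L^2}$ just obtained, $\int_0^T\|\nabla\mathbf{u}\|_{L^\infty}^2\,dt$ and $\int_0^T\|\mathbf{u}\|_{W^{2,4}}^2\,dt$, all $\le C(T)$, which yields $\int_0^T\|\mathbf{u}\|_{H^3}^2\,dt\le C(T)$ and completes \eqref{666.118}. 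The main difficulty I expect is the mutual coupling between $\|\nabla^2\rho\|_{L^2}$ and $\|\nabla^3\mathbf{u}\|_{L^2}$: differentiating the continuity equation twice forces one extra derivative of $\mathbf{u}$, while the elliptic bound for $\mathbf{u}$ in turn feeds $\nabla^2\rho$ back through $\nabla P$. The resolution is to substitute the $H^3$ estimate into the $\nabla^2\rho$ inequality and use Young's inequality — exploiting that, unlike in the velocity estimates, there is no dissipation in $\nabla^2\rho$ and none is needed, because the resulting $C(T)\|\nabla^2\rho\|_{L^2}^2$ term is harmless for Gronwall once the remaining coefficients have been checked to be $L^1$ in time; the supporting observation is that $\|\sqrt{\rho}\dot{\mathbf{u}}\|_{L^4}^2$ and $\|\nabla\mathbf{u}\|_{L^\infty}^2$ are integrable in time thanks to \eqref{666.7}.
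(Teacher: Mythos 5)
Your proof is correct and follows essentially the same route as the paper: twice-differentiate the transport equations for $\rho$ and $s$, test against $\nabla^2\rho$ and $\nabla^2 s$, and close by Gronwall, using the $H^1$- and $W^{2,4}$-elliptic estimates (\eqref{66.19}, \eqref{666.11}) to render the time coefficients integrable via $\int_0^T\|\nabla\dot{\mathbf{u}}\|_{L^2}^2\,dt\le C(T)$, then recovering $\int_0^T\|\mathbf{u}\|_{H^3}^2\,dt$ by substituting the sup-bounds back into \eqref{66.19}. Your separate preparation of $\|\sqrt{\rho}\dot{\mathbf{u}}\|_{L^4}\lesssim\|\nabla\dot{\mathbf{u}}\|_{L^2}$ and the explicit $g(t),h(t)\in L^1(0,T)$ bookkeeping are only cosmetic rearrangements of the paper's \eqref{66.22} and its Gronwall step.
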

	\begin{proof}
		Applying $\partial_i\partial_j$ to equation \eqref{1.1}$_1$, we obtain
		$$\begin{aligned}
			\partial_t(\partial_{ij}\rho) + \partial_{ij}\mathbf{u}^k\partial_k\rho &+ \partial_i\mathbf{u}^k\partial_{jk}\rho + \partial_j\mathbf{u}^k\partial_{ik}\rho + \mathbf{u}^k\partial_{ijk}\rho + \partial_{ij}\rho\partial_k\mathbf{u}^k 
			\\
			&+ \partial_i\rho\partial_{jk}\mathbf{u}^k + \partial_j\rho\partial_{ik}\mathbf{u}^k + \rho\partial_{ijk}\mathbf{u}^k = 0.
		\end{aligned}$$
		Multiplying by $\partial_{ij}\rho$, summing over $i, j$, and integrating over $\Omega$, we utilize integration by parts to obtain
		\begin{equation}\label{66.18}
			\begin{aligned}
				\frac{d}{dt}\int_{\Omega}|\nabla ^2\rho |^2\,dx &\le C\left( \int_{\Omega}|\nabla \rho ||\nabla ^2\mathbf{u}||\nabla ^2\rho |\,dx+\int_{\Omega}|\nabla \mathbf{u}||\nabla ^2\rho |^2\,dx+\int_{\Omega}\rho|\nabla ^3\mathbf{u}||\nabla ^2\rho |\,dx \right)\\
				&\le C(T)(1+\|\nabla \mathbf{u}\|_{L^{\infty}})\int_{\Omega}|\nabla ^2\rho |^2\,dx\\
				&\quad +C(T)\int_{\Omega}|\nabla \rho |^2|\nabla ^2\mathbf{u}|^2\,dx+C(T)\int_{\Omega}|\nabla ^3\mathbf{u}|^2\,dx\\
				&\le C(T)(1+\|\nabla \mathbf{u}\|_{L^{\infty}})\|\nabla ^2\rho \|_{L^2}^{2}\\
				&\quad +C(T)\|\nabla \rho \|_{L^4}^{2}\|\nabla ^2\mathbf{u}\|_{L^4}^{2}+C(T)\|\nabla ^3\mathbf{u}\|_{L^2}^{2}\\
				&\le C(T)(1+\|\nabla \mathbf{u}\|_{L^{\infty}})\|\nabla ^2\rho \|_{L^2}^{2}+C(T)+C(T)\|\mathbf{u}\|_{H^3}^{2},
			\end{aligned}
		\end{equation}
		Since standard $H^1$ estimates for elliptic systems \eqref{66.55} yields
		\begin{equation}\label{66.19}
			\begin{aligned}
				\|\mathbf{u}\|_{H^3}^{2} &\le C\|\rho ^{-1}(\rho \dot{\mathbf{u}}+\nabla P-2\nabla \rho \cdot \nabla \mathbf{u})\|_{H^1}^{2}\\
				&\le C(T)\left( \|\sqrt{\rho}\dot{\mathbf{u}}\|_{L^2}+\|\nabla \rho \|_{L^2}+1+\|\nabla \mathbf{u}\|_{L^2} \right)\\
				&\quad +C(T)\left( \|\nabla \dot{\mathbf{u}}\|_{L^2}^{2}+\|\nabla \rho \|_{L^4}^{4}+\|\nabla \rho \|_{L^4}^{2}\|\nabla s\|_{L^4}^{2}+\|\nabla ^2\rho \|_{L^2}^{2} \right)\\
				&\quad +C(T)\left( \|\nabla s\|_{L^4}^{4}+\|\nabla ^2s\|_{L^2}^{2}+\| |\nabla \rho |^2|\nabla \mathbf{u}| \|_{L^2}^{2} \right)\\
				&\quad +C(T)\left( \| |\nabla ^2\rho ||\nabla \mathbf{u}| \|_{L^2}^{2}+\| |\nabla \rho ||\nabla ^2\mathbf{u}| \|_{L^2}^{2} \right)\\
				&\le C(T)\left( 1+\|\nabla \dot{\mathbf{u}}\|_{L^2}^{2}+\|\nabla ^2\rho \|_{L^2}^{2}+\|\nabla ^2s\|_{L^2}^{2} \right)\\
				&\quad +C(T)\|\nabla ^2\rho \|_{L^2}^{2}\|\nabla \mathbf{u}\|_{W^{1,4}}^{2}+C(M)\|\nabla ^2\mathbf{u}\|_{L^4}^{2}\\
				&\le C(T)\left( 1+\|\nabla \dot{\mathbf{u}}\|_{L^2}^{2} \right) \left( 1+\|\nabla ^2\rho \|_{L^2}^{2} \right) +C(T)\|\nabla ^2s\|_{L^2}^{2}\\
				&\quad +C(T)\|\nabla ^2\mathbf{u}\|_{L^2}^{\frac{4-N}{2}}\|\nabla ^2\mathbf{u}\|_{H^1}^{\frac{N}{2}}\\
				&\le \frac{1}{2}\|\mathbf{u}\|_{H^3}^{2}+C(T)\left( 1+\|\nabla \dot{\mathbf{u}}\|_{L^2}^{2} \right) \left( 1+\|\nabla ^2\rho \|_{L^2}^{2} \right) +C(T)\|\nabla ^2s\|_{L^2}^{2}.
			\end{aligned}
		\end{equation}
		Applying the operator $\partial_{ij}$ to the equation $s_t + \mathbf{u} \cdot \nabla s = 0$, multiplying by $\partial_{ij}s$, summing over $i, j$, and integrating over $\Omega$, we obtain
		\begin{equation}\label{66.20}
			\begin{aligned}
				\frac{1}{2}\frac{d}{dt}\int_{\Omega}|\nabla ^2s|^2\,dx &=-\int_{\Omega}(\partial _{ij}s)\left( \partial _{ij}\mathbf{u}^k\partial _ks+\partial _i\mathbf{u}^k\partial _{jk}s+\partial _j\mathbf{u}^k\partial _{ik}s \right) \,dx\\
				&\quad -\int_{\Omega}(\partial _{ij}s)\mathbf{u}^k\partial _k(\partial _{ij}s)\,dx\\
				&=-\int_{\Omega}\partial _{ij}s\left( \partial _{ij}\mathbf{u}\cdot \nabla s+2\nabla \mathbf{u}\cdot \nabla ^2s \right) \,dx+\frac{1}{2}\int_{\Omega}(\mathrm{div}\mathbf{u})|\nabla ^2s|^2\,dx\\
				&\le C\|\nabla \mathbf{u}\|_{L^{\infty}}\|\nabla ^2s\|_{L^2}^{2}+C\|\nabla s\|_{L^{\infty}}\|\nabla ^2\mathbf{u}\|_{L^2}\|\nabla ^2s\|_{L^2}\\
				&\le C\left( \|\nabla \mathbf{u}\|_{L^{\infty}}+1 \right) \|\nabla ^2s\|_{L^2}^{2}+C\|\nabla ^2\mathbf{u}\|_{L^2}^{2}.
			\end{aligned}
		\end{equation}
		It follows from \eqref{666.11} that 
		\begin{equation}\label{66.22}
			\|\nabla \mathbf{u}\|_{L^{\infty}} \le C\|\nabla \mathbf{u}\|_{W^{1,4}} \le C(T)\left( \|\nabla \dot{\mathbf{u}}\|_{L^2}^{2}+1 \right).
		\end{equation}
		Combining \eqref{66.18}-\eqref{66.22}, we arrive at 
		\begin{equation}
			\begin{aligned}
				\frac{d}{dt}\int_{\Omega}\left( |\nabla ^2s|^2+|\nabla ^2\rho |^2 \right)\,dx
				&\le C(T) \left( \|\nabla \dot{\mathbf{u}}\|_{L^2}^{2}+1 \right) \left( \|\nabla ^2s\|_{L^2}^{2}+\|\nabla ^2\rho \|_{L^2}^{2} \right) \\
				&\quad + C(T) + \|\nabla \dot{\mathbf{u}}\|_{L^2}^{2}.
			\end{aligned} 
		\end{equation}
		Using Gronwall’s inequality, \eqref{666.7} and \eqref{66.19}, we get \eqref{666.118}.
	\end{proof}
	\begin{proposition}
		Assume that \eqref{66.1} holds. Then, there exists a constant $C(T)$ such that
		\begin{equation} \label{666.16}
			\sup_{0 \le t \le T} \left( \|\nabla \dot{\mathbf{u}}\|_{L^2}^2 + \|\nabla \mathbf{u}_t\|_{L^2}^2 + \|\mathbf{u}\|_{H^3}^2 \right) + \int_0^T \left( \|\mathbf{u}_{tt}\|_{L^2}^2 + \|\mathbf{u}_t\|_{H^2}^2 \right) dt \le C(T).
		\end{equation}
	\end{proposition}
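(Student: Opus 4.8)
The plan is to differentiate the momentum equation in $t$ and run a weighted energy estimate against $\mathbf{u}_{tt}$, one derivative level above \eqref{666.7}. Rewrite \eqref{1.1}$_2$ as $\rho\dot{\mathbf{u}}+\nabla P=2\nabla\cdot(\rho\nabla\mathbf{u})$, apply $\partial_t$, and eliminate $\rho_t=-\nabla\cdot(\rho\mathbf{u})$ via the continuity equation to obtain a relation of the schematic form $\rho\dot{\mathbf{u}}_t-2\nabla\cdot(\rho\nabla\mathbf{u}_t)=-\nabla P_t-2\nabla\cdot(\rho_t\nabla\mathbf{u})+\nabla\cdot(\rho\mathbf{u})\,\dot{\mathbf{u}}$. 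Taking the $L^2(\Omega)$ inner product with $\mathbf{u}_{tt}$ — which differs from $\dot{\mathbf{u}}_t$ only by $\partial_t(\mathbf{u}\cdot\nabla\mathbf{u})$, controlled by Young's inequality — and using $\mathbf{u}_{tt}|_{\partial\Omega}=0$ (since $\mathbf{u}|_{\partial\Omega}=0$ for all $t$), the viscous term yields $\frac{d}{dt}\int_\Omega\rho|\nabla\mathbf{u}_t|^2\,dx-\int_\Omega\rho_t|\nabla\mathbf{u}_t|^2\,dx$ and the left-hand side yields $\int_\Omega\rho|\mathbf{u}_{tt}|^2\,dx$; these are the quantities we propagate.

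The term $-\int_\Omega\nabla P_t\cdot\mathbf{u}_{tt}\,dx$ I would integrate by parts in space and then in time, rewriting it as $\frac{d}{dt}\int_\Omega P_t\,\mathrm{div}\,\mathbf{u}_t\,dx-\int_\Omega P_{tt}\,\mathrm{div}\,\mathbf{u}_t\,dx$, with $P_t=-\mathbf{u}\cdot\nabla P-\gamma P\,\mathrm{div}\,\mathbf{u}$ and $P_{tt}$ obtained by one more $t$-derivative. Since $P_{tt}$ involves only $\mathbf{u}_t$ (never $\mathbf{u}_{tt}$) and, through $s_t=-\mathbf{u}\cdot\nabla s$, only $\nabla s$ and $\nabla^2 s$ — all bounded by \eqref{666.118} together with $\underline{s}\le s\le\bar{s}$ — the term $\int_\Omega P_{tt}\,\mathrm{div}\,\mathbf{u}_t\,dx$ is dominated by $C(T)(1+\|\nabla\mathbf{u}_t\|_{L^2}^2)$, while the time-boundary contribution of $\frac{d}{dt}\int_\Omega P_t\,\mathrm{div}\,\mathbf{u}_t\,dx$ is absorbed by $\varepsilon\|\nabla\mathbf{u}_t\|_{L^2}^2$ after noting $\|P_t\|_{L^2}\le C(T)$.

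All remaining terms — coming from $\nabla\cdot(\rho_t\nabla\mathbf{u})$, $\nabla\cdot(\rho\mathbf{u})\dot{\mathbf{u}}$, $\int_\Omega\rho_t|\nabla\mathbf{u}_t|^2$, and $\int_\Omega\rho\,\partial_t(\mathbf{u}\cdot\nabla\mathbf{u})\cdot\mathbf{u}_{tt}$ — are products among $\mathbf{u},\nabla\mathbf{u},\nabla^2\mathbf{u},\nabla\rho,\rho_t,\dot{\mathbf{u}},\mathbf{u}_t,\nabla\mathbf{u}_t$ tested against $\mathbf{u}_{tt}$ or $\nabla\mathbf{u}_t$. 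I would control them by Hölder, sharp Gagliardo--Nirenberg interpolation (with the $\tfrac{4-N}{4},\tfrac{N}{4}$ exponents used throughout the excerpt), the $W^{2,4}$ bound \eqref{666.11}, the $H^2$ elliptic bound for $\mathbf{u}_t$ solving $\Delta\mathbf{u}_t=\partial_t\!\big(\tfrac1{2\rho}(\rho\dot{\mathbf{u}}+\nabla P-2\nabla\rho\cdot\nabla\mathbf{u})\big)$ with $\mathbf{u}_t|_{\partial\Omega}=0$, and \eqref{66.22}; the inputs $\|\sqrt{\rho}\dot{\mathbf{u}}\|_{L^2}^2,\|\mathbf{u}\|_{H^2}^2\le C(T)$ from \eqref{666.7} and $\|\rho\|_{H^2}^2,\|s\|_{H^2}^2\le C(T)$ from \eqref{666.118} then make each term bounded by $\varepsilon\big(\|\mathbf{u}_{tt}\|_{L^2}^2+\|\mathbf{u}_t\|_{H^2}^2\big)+C(\varepsilon,T)\big(1+\|\nabla\dot{\mathbf{u}}\|_{L^2}^2\big)\big(1+\|\nabla\mathbf{u}_t\|_{L^2}^2\big)+g(t)$ with $g\in L^1(0,T)$. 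Choosing $\varepsilon$ small, absorbing $\varepsilon\|\mathbf{u}_{tt}\|_{L^2}^2$ into $\int_\Omega\rho|\mathbf{u}_{tt}|^2\ge\underline{\rho}\|\mathbf{u}_{tt}\|_{L^2}^2$ and $\varepsilon\|\mathbf{u}_t\|_{H^2}^2$ via the elliptic bound, and integrating in $t$ against the $L^1$-in-time weight $1+\|\nabla\dot{\mathbf{u}}\|_{L^2}^2$ (integrable by \eqref{666.2}), Gronwall yields $\sup_{[0,T]}\|\nabla\mathbf{u}_t\|_{L^2}^2+\int_0^T\|\mathbf{u}_{tt}\|_{L^2}^2\,dt\le C(T)$; then $\sup_{[0,T]}\|\nabla\dot{\mathbf{u}}\|_{L^2}^2\le C(T)$ via $\nabla\dot{\mathbf{u}}=\nabla\mathbf{u}_t+\nabla(\mathbf{u}\cdot\nabla\mathbf{u})$, $\sup_{[0,T]}\|\mathbf{u}\|_{H^3}^2\le C(T)$ by feeding this into \eqref{66.19} with \eqref{666.118}, and $\int_0^T\|\mathbf{u}_t\|_{H^2}^2\,dt\le C(T)$ from the elliptic bound for $\mathbf{u}_t$.

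The main obstacle is that both $\|\nabla\mathbf{u}\|_{L^\infty}$ and $\|\mathbf{u}_t\|_{H^2}$ grow with the energy $\Phi(t):=\int_\Omega\rho|\nabla\mathbf{u}_t|^2\,dx+\int_\Omega P_t\,\mathrm{div}\,\mathbf{u}_t\,dx$ (via \eqref{66.22} and the elliptic estimate, which is why $\Phi$ is only equivalent to $\|\nabla\mathbf{u}_t\|_{L^2}^2$ up to an additive $C(T)$), so that the differential inequality $\Phi'+c\,\|\mathbf{u}_{tt}\|_{L^2}^2\lesssim(1+\|\nabla\dot{\mathbf{u}}\|_{L^2}^2)(1+\Phi)+g$ can be closed only because $\int_0^T(1+\|\nabla\dot{\mathbf{u}}\|_{L^2}^2)\,dt$ is already finite from \eqref{666.2}; keeping every occurrence of $\|\mathbf{u}_{tt}\|_{L^2}$, $\|\mathbf{u}_t\|_{H^2}$ and $\|\mathbf{u}\|_{W^{2,4}}$ strictly linear after Young's inequality — which forces the sharp interpolation exponents and the precise order of the spatial and temporal integrations by parts in the treatment of $\nabla P_t\cdot\mathbf{u}_{tt}$ and $\nabla\cdot(\rho_t\nabla\mathbf{u})\cdot\mathbf{u}_{tt}$ — is the delicate point. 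The entropy enters only through $\nabla s\in L^\infty$ and $\nabla^2 s\in L^2$ and causes no extra difficulty.
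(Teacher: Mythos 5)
Your route is correct but genuinely different from the paper's in two respects. First, the paper differentiates in time the \emph{normalized elliptic} equation \eqref{66.55}, $-\Delta\mathbf{u}_t = -\big[\rho^{-1}(\rho\dot{\mathbf{u}}+\nabla P - 2\nabla\rho\cdot\nabla\mathbf{u})\big]_t$, and tests against $\mathbf{u}_{tt}$; the constant-coefficient Laplacian produces the \emph{unweighted} energy $\tfrac12\tfrac{d}{dt}\|\nabla\mathbf{u}_t\|_{L^2}^2 + \|\mathbf{u}_{tt}\|_{L^2}^2$ directly, whereas you keep the divergence-form operator $\nabla\cdot(\rho\nabla\mathbf{u}_t)$ and thus carry a $\rho$-weighted energy together with an extra commutator $\int_\Omega\rho_t|\nabla\mathbf{u}_t|^2\,dx$, in exchange for avoiding the terms produced by differentiating $\rho^{-1}$. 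Second, the paper does not integrate the pressure term by parts in time: since $\|\rho_t\|_{L^6}$, $\|\nabla\rho_t\|_{L^2}$, $\|s_t\|_{L^6}$, $\|\nabla s_t\|_{L^2}$ are already uniformly bounded (\eqref{66.29}, a consequence of \eqref{666.118}), it simply estimates $\int_\Omega\big|\big[\rho^{-1}\nabla P\big]_t\big|\,|\mathbf{u}_{tt}|\,dx$ by Young's inequality and absorbs $\tfrac12\|\mathbf{u}_{tt}\|_{L^2}^2$, whereas your space-then-time integration by parts of $\nabla P_t\cdot\mathbf{u}_{tt}$ into the modified functional $\Phi=\int_\Omega\rho|\nabla\mathbf{u}_t|^2\,dx+\int_\Omega P_t\,\mathrm{div}\,\mathbf{u}_t\,dx$ is a heavier maneuver (standard when $\nabla P_t\notin L^\infty_tL^2_x$, but unnecessary here). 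Both routes close Gronwall with the same inputs — $L^1_t$ integrability of $\|\nabla\dot{\mathbf{u}}\|_{L^2}^2$ from \eqref{666.2}, the elliptic $H^2$ bound for $\mathbf{u}_t$ (\eqref{66.28}), the $W^{2,4}$ bound \eqref{666.11}, and \eqref{66.22} — and deliver identical conclusions; since $\rho$ is bounded above and below here, the choice of weighted versus unweighted energy and of direct Young versus time-IBP on the pressure term is mostly a matter of taste, though the paper's is the shorter of the two.
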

	\begin{proof}
		Differentiating \eqref{66.55} with respect to $t$ gives
		\begin{equation} \label{66.25}
			-\Delta u_t=-\left[ \frac{1}{\rho}(\rho \dot{u}+\nabla P-2\nabla \rho \cdot \nabla u) \right] _t.
		\end{equation}
		Taking the $L^2$ inner product with $u_{tt}$ and integrating by parts, we derive
		\begin{equation} \label{66.26}
			\begin{aligned}
				\frac{1}{2}&\frac{d}{dt}\int_{\Omega}|\nabla \mathbf{u}_t|^2 \,dx + \int_{\Omega}|\mathbf{u}_{tt}|^2\,dx\\
				&\le C(T)\int_{\Omega}|\mathbf{u}_{tt}|\Big(|\mathbf{u}_t||\nabla \mathbf{u}|+|\mathbf{u}||\nabla \mathbf{u}_t|+|\rho _t||\nabla \rho |+|\nabla \rho _t|\\
				&\quad +|s_t||\nabla \rho|+|\rho _t||\nabla s| +|\nabla s_t|+|s_t||\nabla s|\\
				&\quad +|\rho _t||\nabla \rho ||\nabla \mathbf{u}|+|\nabla \rho _t||\nabla \mathbf{u}|+|\nabla \rho ||\nabla \mathbf{u}_t|\Big)\,dx\\
				&\le \frac{1}{2}\int_{\Omega}|\mathbf{u}_{tt}|^2\,dx + C(T)\int_{\Omega}\Big(|\mathbf{u}_t|^2|\nabla \mathbf{u}|^2+|\mathbf{u}|^2|\nabla \mathbf{u}_t|^2+|\rho _t|^2|\nabla \rho |^2\\
				&\quad +|\nabla \rho _t|^2+|s_t|^2|\nabla \rho|^2 +|\rho _t|^2|\nabla s|^2+|\nabla s_t|^2+|s_t|^2|\nabla s|^2\\
				&\quad +|\rho _t|^2|\nabla \rho |^2|\nabla \mathbf{u}|^2+|\nabla \rho _t|^2|\nabla \mathbf{u}|^2+|\nabla \rho |^2|\nabla \mathbf{u}_t|^2\Big)\,dx\\
				&\le \frac{1}{2}\int_{\Omega}|\mathbf{u}_{tt}|^2\,dx + C(T)\Big(\|\mathbf{u}_t\|_{L^6}^{2}\|\nabla \mathbf{u}\|_{L^6}^{2}+\|\mathbf{u}\|_{L^{\infty}}^{2}\|\nabla \mathbf{u}_t\|_{L^2}^{2}\\
				&\quad +\|\rho _t\|_{L^6}^{2}\|\nabla \rho \|_{L^6}^{2} +\|\nabla \rho _t\|_{L^2}^{2}+\|s_t\|_{L^6}^{2}\|\nabla \rho \|_{L^6}^{2}\\
				&\quad +\|\rho _t\|_{L^6}^{2}\|\nabla s\|_{L^6}^{2}+\|\nabla s_t\|_{L^2}^{2}+\|s_t\|_{L^6}^{2}\|\nabla s\|_{L^6}^{2}\\
				&\quad +\|\rho _t\|_{L^6}^{2}\|\nabla \rho \|_{L^6}^{2}\|\nabla \mathbf{u}\|_{L^6}^{2}+\|\nabla \rho _t\|_{L^2}^{2}\|\nabla \mathbf{u}\|_{L^{\infty}}^{2}+\|\nabla \rho \|_{L^6}^{2}\|\nabla \mathbf{u}_t\|_{L^3}^{2}\Big).
			\end{aligned}
		\end{equation}
		We note that 
		\begin{equation}\label{66.27}
			\begin{aligned}
				\|\nabla \mathbf{u}_t\|_{L^2}^{2} &\le C\|\nabla \dot{\mathbf{u}}\|_{L^2}^{2}+C\|\nabla (\mathbf{u}\cdot \nabla \mathbf{u})\|_{L^2}^{2}\le C\|\nabla \dot{\mathbf{u}}\|_{L^2}^{2}+C(T),\\
				\|\nabla \dot{\mathbf{u}}\|_{L^2}^{2} &\le C\|\nabla \mathbf{u}_t\|_{L^2}^{2}+C\|\nabla (\mathbf{u}\cdot \nabla \mathbf{u})\|_{L^2}^{2}\le C\|\nabla \mathbf{u}_t\|_{L^2}^{2}+C(T),
			\end{aligned}
		\end{equation}
		and the $L^2$-estimates of time derivatives for the elliptic system \eqref{66.55} 
		\begin{equation}\label{66.28}
			\begin{aligned}
				\|\mathbf{u}_t\|_{H^2}^{2} &\le C\left\| \left[ \rho ^{-1}(\rho \dot{\mathbf{u}}+\nabla P-2\nabla \rho \cdot \nabla \mathbf{u}) \right] _t \right\| _{L^2}^{2}\\
				&\le C(T)\Big(\|\mathbf{u}_{tt}\|_{L^2}^{2}+\| |\mathbf{u}_t||\nabla \mathbf{u}|\|_{L^2}^{2}+\| |\mathbf{u}||\nabla \mathbf{u}_t|\|_{L^2}^{2}+\| \rho _t|\nabla \rho |\|_{L^2}^{2}\\
				&\quad +\|\nabla \rho _t\|_{L^2}^{2}+\| |s_t||\nabla \rho |\|_{L^2}^{2}+\|\nabla s_t\|_{L^2}^{2}+\| |\nabla s||s_t| \|_{L^2}^{2}+\| |\nabla s||\rho _t| \|_{L^2}^{2}\\
				&\quad +\|\rho _t|\nabla \rho ||\nabla \mathbf{u}|\|_{L^2}^{2}+\| |\nabla \rho _t||\nabla \mathbf{u}|\|_{L^2}^{2}+\| |\nabla \rho ||\nabla \mathbf{u}_t|\|_{L^2}^{2}\Big)\\
				&\le C(T)\Big(\|\mathbf{u}_{tt}\|_{L^2}^{2}+\|\mathbf{u}_t\|_{L^6}^{2}\|\nabla \mathbf{u}\|_{L^6}^{2}+\|\mathbf{u}\|_{L^{\infty}}^{2}\|\nabla \mathbf{u}_t\|_{L^2}^{2}\\
				&\quad +\|\rho _t\|_{L^6}^{2}\|\nabla \rho \|_{L^6}^{2}+\|\nabla \rho _t\|_{L^2}^{2}+\|s_t\|_{L^6}^{2}\|\nabla \rho \|_{L^6}^{2}+\|\nabla s_t\|_{L^2}^{2}\\
				&\quad +\|\nabla s\|_{L^6}^{2}\|s_t\|_{L^6}^{2}+\|\nabla s\|_{L^6}^{2}\|\rho _t\|_{L^6}^{2}+\|\rho _t\|_{L^6}^{2}\|\nabla \rho \|_{L^6}^{2}\|\nabla \mathbf{u}\|_{L^6}^{2}\\
				&\quad +\|\nabla \rho _t\|_{L^2}^{2}\|\nabla \mathbf{u}\|_{L^{\infty}}^{2}+\|\nabla \rho \|_{L^6}^{2}\|\nabla \mathbf{u}_t\|_{L^3}^{2}\Big)\\
				&\le C(T)\left( 1+\|\nabla \dot{\mathbf{u}}\|_{L^2}^{2}+\|\mathbf{u}_{tt}\|_{L^2}^{2}+\|\nabla \mathbf{u}_t\|_{L^3}^{2} \right)\\
				&\le C(T)\left( 1+\|\nabla \dot{\mathbf{u}}\|_{L^2}^{2}+\|\mathbf{u}_{tt}\|_{L^2}^{2} \right) +C(T)\|\nabla \mathbf{u}_t\|_{L^2}^{\frac{6-N}{3}}\|\nabla \mathbf{u}_t\|_{H^1}^{\frac{N}{3}}\\
				&\le \frac{1}{2}\|\nabla \mathbf{u}_t\|_{H^1}^{2}+C(T)\left( 1+\|\nabla \dot{\mathbf{u}}\|_{L^2}^{2}+\|\mathbf{u}_{tt}\|_{L^2}^{2} \right),
			\end{aligned}
		\end{equation}
		where we used the fact that 
		\begin{equation}\label{66.29}
			\| \rho_t \|_{L^6}^2+\|\nabla \rho _t\|_{L^2}^{2}+\| s_t \|_{L^6}^2+\|\nabla s_t\|_{L^2}^{2} \le C(T).
		\end{equation}
		Substituting \eqref{66.27} and \eqref{66.29} into \eqref{66.26} and using Young's inequality, we arrive at
		\begin{equation}
			\begin{aligned}
				\frac{d}{dt}\int_{\Omega}|\nabla \mathbf{u}_t|^2\,dx + \int_{\Omega}|\mathbf{u}_{tt}|^2\,dx
				&\le \frac{1}{2}\int_{\Omega}|\mathbf{u}_{tt}|^2\,dx + C(T)\Big(1+\|\nabla \mathbf{u}_t\|_{L^2}^{2} \\
				&\quad +\|\nabla \mathbf{u}_t\|_{L^2}^{\frac{6-N}{3}}\|\nabla \mathbf{u}_t\|_{H^1}^{\frac{N}{3}}\Big) \\
				&\le \frac{1}{2}\int_{\Omega}|\mathbf{u}_{tt}|^2\,dx + C(T)(1+\|\nabla \mathbf{u}_t\|_{L^2}^{2}).
			\end{aligned}
		\end{equation}
		Using Gronwall's inequality, \eqref{66.19}, \eqref{66.27} and \eqref{66.28}, we obtain \eqref{666.16}.
	\end{proof}
	\begin{proposition}
		Assume that \eqref{66.1} holds. Then, there exists a constant $C(T)$ such that
		\begin{equation}\label{666.31}
			\begin{aligned}
				\sup_{0\le t\le T}(\|\rho\|_{H^3}^2+\|s\|_{H^3}^2+\|\rho_t\|_{H^2}^2 + \|\rho_{tt}\|_{L^2}^2+\|s_t\|_{H^2}^2 + \|s_{tt}\|_{L^2}^2) 
				\\
				+ \int_{0}^{T}\|\mathbf{u}\|_{H^4}^2 +\|\rho_{tt} \|_{H^1}^2+\|s_{tt} \|_{H^1}^2\,dt \le C(T).
			\end{aligned}
		\end{equation}
	\end{proposition}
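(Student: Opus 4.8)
The plan is a bootstrap in the spirit of the preceding propositions: upgrade the spatial regularity of $(\rho,s)$ from $H^2$ (which is \eqref{666.118}) to $H^3$; feed this into an $H^2$–estimate for the elliptic system \eqref{66.55} to obtain $\mathbf{u}\in L^2(0,T;H^4)$; and then read off all the time–derivative bounds algebraically from \eqref{1.1}.

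\textbf{Step 1: a closed $H^3$ estimate for $\rho$ and $s$.} Apply $\partial_{ij\ell}$ to the continuity equation \eqref{1.1}$_1$ and to the transport equation \eqref{1.1}$_3$, multiply by $\partial_{ij\ell}\rho$ and $\partial_{ij\ell}s$ respectively, sum over indices and integrate over $\Omega$. In each identity the top–order convection term integrates by parts into $\tfrac12\int(\dv\mathbf{u})|\partial^3 f|^2\,dx$, bounded by $\|\nabla\mathbf{u}\|_{L^\infty}\|\partial^3 f\|_{L^2}^2\le C(T)\|\partial^3 f\|_{L^2}^2$ since $\|\mathbf{u}\|_{H^3}\le C(T)$ by \eqref{666.16}. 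All other commutator terms are products $\partial^a\mathbf{u}\cdot\nabla\partial^{\,b}(\rho\text{ or }s)$ with $a+b\le 3$; using the uniform bounds $\|\rho\|_{H^2},\|s\|_{H^2},\|\mathbf{u}\|_{H^3}\le C(T)$ from \eqref{666.118}--\eqref{666.16}, the embeddings $H^1\hookrightarrow L^6$ and $H^2\hookrightarrow L^\infty$, and the Gagliardo--Nirenberg inequality $\|\nabla^2 f\|_{L^4}\le C(1+\|\nabla^3 f\|_{L^2})$, each such term is controlled either by $C(T)(1+\|\nabla^3\rho\|_{L^2}^2+\|\nabla^3 s\|_{L^2}^2)$ or, when a top–order $\mathbf{u}$–factor appears (which happens only for $\rho$, via the piece $\rho\,\partial^3\dv\mathbf{u}$ in $\partial^3(\rho\,\dv\mathbf{u})$, since \eqref{1.1}$_3$ produces no such factor), by $\varepsilon\|\mathbf{u}\|_{H^4}^2+C_\varepsilon(T)\|\nabla^3\rho\|_{L^2}^2$. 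This yields
\[
\frac{d}{dt}\bigl(\|\nabla^3\rho\|_{L^2}^2+\|\nabla^3 s\|_{L^2}^2\bigr)\le \varepsilon\|\mathbf{u}\|_{H^4}^2+C(T)\bigl(1+\|\nabla^3\rho\|_{L^2}^2+\|\nabla^3 s\|_{L^2}^2\bigr).
\]

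\textbf{Step 2: $H^4$–estimate for $\mathbf{u}$ and closing the loop.} A standard $H^2$–estimate for \eqref{66.55}, using that $\rho,\rho^{-1},s$ are bounded, that $H^2(\Omega)$ is an algebra for $N\le3$, and a Gagliardo--Nirenberg interpolation to absorb the only genuinely top–order contribution $\nabla\rho\cdot\nabla^3\mathbf{u}$, gives
\[
\|\mathbf{u}\|_{H^4}\le C(T)\bigl(1+\|\mathbf{u}_t\|_{H^2}+\|\nabla^3\rho\|_{L^2}+\|\nabla^3 s\|_{L^2}\bigr),
\]
where we used $\|\dot{\mathbf{u}}\|_{H^2}\le\|\mathbf{u}_t\|_{H^2}+\|\mathbf{u}\cdot\nabla\mathbf{u}\|_{H^2}\le\|\mathbf{u}_t\|_{H^2}+C(T)$ and $\|\nabla P\|_{H^2}\le C(T)(1+\|\nabla^3\rho\|_{L^2}+\|\nabla^3 s\|_{L^2})$. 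Inserting this into the $\varepsilon$–term of Step 1 (no smallness of $\varepsilon$ is needed, as the argument is closed by Gronwall, not by absorption) we obtain
\[
\frac{d}{dt}\bigl(\|\nabla^3\rho\|_{L^2}^2+\|\nabla^3 s\|_{L^2}^2\bigr)\le C(T)\bigl(1+\|\nabla^3\rho\|_{L^2}^2+\|\nabla^3 s\|_{L^2}^2\bigr)+C(T)\|\mathbf{u}_t\|_{H^2}^2 .
\]
Since $\int_0^T\|\mathbf{u}_t\|_{H^2}^2\,dt\le C(T)$ by \eqref{666.16}, Gronwall's inequality gives $\sup_{[0,T]}(\|\nabla^3\rho\|_{L^2}^2+\|\nabla^3 s\|_{L^2}^2)\le C(T)$, and then $\int_0^T\|\mathbf{u}\|_{H^4}^2\,dt\le C(T)$.

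\textbf{Step 3: time derivatives, and main obstacle.} The remaining bounds are algebraic: from $\rho_t=-\dv(\rho\mathbf{u})$, $s_t=-\mathbf{u}\cdot\nabla s$ and $\rho,s,\mathbf{u}\in H^3$ (with the $H^2$–algebra) one gets $\|\rho_t\|_{H^2}+\|s_t\|_{H^2}\le C(T)$; differentiating once more, $\rho_{tt}=-\dv(\rho_t\mathbf{u}+\rho\mathbf{u}_t)$ and $s_{tt}=-\mathbf{u}_t\cdot\nabla s-\mathbf{u}\cdot\nabla s_t$, and the $L^\infty_t L^2$ bounds on $\rho_{tt},s_{tt}$ use only the pointwise bounds $\|\mathbf{u}_t\|_{H^1}\le C(T)$ (Poincar\'e together with \eqref{666.16}), $\|\rho_t\|_{H^2},\|s_t\|_{H^2}\le C(T)$, while the $L^2_t H^1$ bounds pick up a factor $\int_0^T\|\mathbf{u}_t\|_{H^2}^2\,dt\le C(T)$. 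Collecting Steps 1--3 gives \eqref{666.31} (the precise continuity/regularity classes in the definition of classical solution then follow by interpolation and the Aubin--Lions lemma). The one delicate point is the highest–order coupling between Steps 1 and 2: the $\nabla^3\rho$ balance necessarily generates a $\nabla^4\mathbf{u}$ term, while the elliptic control of $\|\mathbf{u}\|_{H^4}$ — through $\nabla P$ and $\nabla\rho\cdot\nabla\mathbf{u}$ — reintroduces $\|\nabla^3\rho\|_{L^2}$ and $\|\nabla^3 s\|_{L^2}$; one must therefore treat $\rho$ and $s$ simultaneously in a single Gronwall scheme for $\|\nabla^3\rho\|_{L^2}^2+\|\nabla^3 s\|_{L^2}^2$ and check that the reinjected $\nabla^4\mathbf{u}$ term does not spoil it (it does not, exactly because closure is by Gronwall, so no smallness of the $C(T)$–coefficients is required). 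Everything else is routine once one exploits the already–established bounds $\|\mathbf{u}\|_{H^3}\le C(T)$ (hence $\|\nabla\mathbf{u}\|_{L^\infty}\le C(T)$) and $\mathbf{u}_t\in L^2(0,T;H^2)$.
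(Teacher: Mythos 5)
Your proof is correct and follows essentially the same route as the paper: differentiate the transport equations for $\rho$ and $s$ three times, test against $\partial^3\rho$ and $\partial^3 s$, combine with the $H^2$ elliptic estimate for \eqref{66.55} to express $\|\mathbf{u}\|_{H^4}^2$ in terms of $\|\nabla^3\rho\|_{L^2}^2+\|\nabla^3 s\|_{L^2}^2$ plus an $L^1_t$-integrable source, and close by Gronwall, after which the time-derivative bounds follow algebraically from the equations. The only cosmetic difference is that you keep $\|\mathbf{u}_t\|_{H^2}^2$ as the integrable source in the Gronwall step, whereas the paper converts it via \eqref{66.28} to $\|\mathbf{u}_{tt}\|_{L^2}^2$; both are integrable on $[0,T]$ by \eqref{666.16}, so this is equivalent.
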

	\begin{proof}
		Applying the operator $\partial_i\partial_j\partial_k$ to equation \eqref{1.1}$_1$, multiplying the resulting equation by $\partial_{ijk}\rho$, summing $i,j,k$ and integrating by parts, we derive 
		\begin{equation}\label{66.32}
			\begin{aligned}
				\frac{1}{2}\frac{d}{dt}\int_{\Omega}|\nabla^3\rho|^2\,dx
				&\le C\int_{\Omega}|\nabla^3\rho|\Big(\rho|\nabla^4\mathbf{u}| + |\nabla\rho||\nabla^3\mathbf{u}| + |\nabla^2\rho||\nabla^2\mathbf{u}| + |\nabla^3\rho||\nabla\mathbf{u}|\Big)\,dx \\
				&\le C(T)\Big(\|\nabla^3\rho\|_{L^2}^2 + \|\nabla^4\mathbf{u}\|_{L^2}^2 + \|\nabla\rho\|_{L^\infty}^2\|\nabla^3\mathbf{u}\|_{L^2}^2 \\
				&\quad + \|\nabla^2\rho\|_{L^6}^2\|\nabla^2\mathbf{u}\|_{L^6}^2 + \|\nabla^3\rho\|_{L^2}^2\|\nabla\mathbf{u}\|_{L^\infty}^2\Big) \\
				&\le C(T)(1+\|\nabla^3\rho\|_{L^2}^2+\|\nabla^4\mathbf{u}\|_{L^2}^2).
			\end{aligned}
		\end{equation}
		Using the $H^2$ estimates for elliptic system \eqref{66.55}, we obtain 
		\begin{equation}\label{66.33}
			\begin{aligned}
				\|\mathbf{u}\|_{H^4}^{2} &\le C\left\| \nabla^2\left( \rho^{-1}(\rho \mathbf{u}_t+\rho \mathbf{u}\cdot \nabla \mathbf{u}+\nabla P-2\nabla \rho \cdot \nabla \mathbf{u}) \right) \right\|_{L^2}^{2} + C(T)\\
				&\le C(T) + C(T)\Big( \|\nabla^2 \mathbf{u}_t\|_{L^2}^{2} + \| |\nabla^2 \mathbf{u}||\nabla \mathbf{u}| \|_{L^2}^{2} + \| |\mathbf{u}||\nabla^3 \mathbf{u}| \|_{L^2}^{2}\\
				&\quad + \|\nabla^3 \rho\|_{L^2}^{2} + \| |\nabla^2 \rho||\nabla \rho| \|_{L^2}^{2} + \| |\nabla^2 \rho||\nabla s| \|_{L^2}^{2} + \| |\nabla \rho|^3 \|_{L^2}^{2}\\
				&\quad + \| |\nabla \rho|^2|\nabla s| \|_{L^2}^{2} + \| |\nabla \rho||\nabla^2 s| \|_{L^2}^{2} + \| |\nabla \rho||\nabla s|^2 \|_{L^2}^{2}\\
				&\quad + \| |\nabla s||\nabla^2 s| \|_{L^2}^{2} + \|\nabla^3 s\|_{L^2}^{2} + \| |\nabla s|^3 \|_{L^2}^{2}\\
				&\quad + \| |\nabla \mathbf{u}||\nabla \rho|^3 \|_{L^2}^{2} + \| |\nabla \rho||\nabla^2 \rho||\nabla \mathbf{u}| \|_{L^2}^{2} + \| |\nabla^2 \mathbf{u}||\nabla \rho|^2 \|_{L^2}^{2}\\
				&\quad + \| |\nabla^2 \mathbf{u}||\nabla^2 \rho| \|_{L^2}^{2} + \| |\nabla^3 \mathbf{u}||\nabla \rho| \|_{L^2}^{2}+ \| |\nabla^3\rho| |\nabla u|\|_{L^2}^2 \Big)\\
				&\le C(T)\left( 1 + \|\nabla^3 \rho\|_{L^2}^{2} + \|\nabla^3 s\|_{L^2}^{2} + \|\mathbf{u}_t\|_{H^2}^{2} \right)\\
				&\le C(T)\left( 1 + \|\nabla^3 \rho\|_{L^2}^{2} + \|\nabla^3 s\|_{L^2}^{2} + \|\mathbf{u}_{tt}\|_{L^2}^{2} \right).
			\end{aligned} 
		\end{equation}
		Similarly, applying the operator $\partial_i\partial_j\partial_k$ to equation (1.1)$_3$, and multiplying by $\partial_{ijk}s$, we arrive at
		\begin{equation}\label{66.34}
			\begin{aligned}
				\frac{1}{2}\frac{d}{dt}\|\nabla^3 s\|_{L^2}^2 &= -\int_{\Omega} \partial_{ijk}(\mathbf{u} \cdot \nabla s) \partial_{ijk} s \,dx \\
				&= -\int_{\Omega} (\mathbf{u} \cdot \nabla \partial_{ijk} s) \partial_{ijk} s \,dx - \int_{\Omega} \Big( \partial_{ijk}(\mathbf{u} \cdot \nabla s) - \mathbf{u} \cdot \nabla \partial_{ijk} s \Big) \partial_{ijk} s \,dx \\
				&= \frac{1}{2}\int_{\Omega} (\operatorname{div} \mathbf{u}) |\nabla^3 s|^2 \,dx - \int_{\Omega} \Big( \partial_{ijk}(\mathbf{u} \cdot \nabla s) - \mathbf{u} \cdot \nabla \partial_{ijk} s \Big) \partial_{ijk} s \,dx \\
				&\le C \|\nabla \mathbf{u}\|_{L^\infty} \|\nabla^3 s\|_{L^2}^2 + C \int_{\Omega} \Big( |\nabla \mathbf{u}||\nabla^3 s| + |\nabla^2 \mathbf{u}||\nabla^2 s| + |\nabla^3 \mathbf{u}||\nabla s| \Big) |\nabla^3 s| \,dx \\
				&\le C(T) (\|\nabla \mathbf{u}\|_{L^\infty} +1)\|\nabla^3 s\|_{L^2}^2 + C(T)\|\nabla^3 \mathbf{u}\|_{L^2}^2.
			\end{aligned}
		\end{equation}
		Combining \eqref{66.32}-\eqref{66.34}, we get
		\begin{equation}
			\frac{d}{dt}\left( \| \nabla^3\rho \|_{L^2}^{2} + \| \nabla^3s \|_{L^2}^{2} \right) \le C(T) \left( 1 + \| \nabla^3\rho \|_{L^2}^{2} + \| \nabla^3s \|_{L^2}^{2} \right) + C(T) \| \mathbf{u}_{tt} \|_{L^2}^{2}.
		\end{equation}
		Using Gronwall's inequality and in view of the structure of the density and entropy equations, we finally arrive at \eqref{666.31}.
	\end{proof}
	\section{Proof of the Main Results}
	\subsection{Proof of the Theorem \ref{theorem 2.1} and \ref{theorem 2.2}}
	For $N=2,\gamma>1$ and $N=3, 1<\gamma<3$, based on lower-order estimates  
	\begin{equation}
		\begin{aligned}
			\mathop {\mathrm{sup}} \limits_{0\le t\le T}&\left( \| \mathbf{u}\| _{L^4} +\| \rho \| _{L^{\infty}}+\| \rho ^{-1}\| _{L^{\infty}}+\| \nabla \rho \| _{L^2}+\| \nabla \rho \| _{L^4} \right) 
			\\
			&+\int_0^T{\left( \| \nabla \rho \| _{L^2}^{2}+\| \nabla \rho \| _{L^4}^{4}+\|\nabla \mathbf{u}\| _{L^2}^{2} \right)}dt\le C(T),
		\end{aligned}
	\end{equation} 
	We derive the higher-order estimates
	\begin{equation}\label{666.22}
		\begin{aligned}
			\sup_{0\le t\le T}&\left( \|\rho\|_{H^3} + \|\rho_t\|_{H^2} + \|\mathbf{u}\|_{H^3} + \|\mathbf{u}_t\|_{H^1} + \|s\|_{H^3} + \|s_t\|_{H^2} \right) \\
			&+ \int_0^T \left( \|\mathbf{u}\|_{H^4}^{2} + \|\mathbf{u}_t\|_{H^2}^{2} + \|\mathbf{u}_{tt}\|_{L^2}^{2} \right) \,dt \le C(T).
		\end{aligned}
	\end{equation}
	Now that we have obtained the global a priori estimates, we proceed to prove that the solution can be extended to a global one.\\
	\textbf{1.Regularity} \par
	First, we verify the continuity of the solution, i.e.
	\begin{equation}\label{66.3}
		\rho ,s\in C\left( \left[ 0,T \right] ;H^3 \right) ,\quad u\in C\left( \left[ 0,T \right] ;H^3 \right) .
	\end{equation}
	The density regularity $\rho \in L^\infty(0, T; H^3)$ and $\rho_t \in L^\infty(0, T; H^2)$ ensure that $\rho \in C([0, T]; H^2)$. The strong continuity in $H^3$ is deduced via standard arguments involving weak and norm convergence. The regularity of $s$ is obtained similarly. For the velocity, the condition $\mathbf{u} \in L^2(0, T; H^4) \cap H^1(0, T; H^2)$ implies $\mathbf{u} \in C([0, T]; H^3)$ by standard interpolation theory.
	\\
	\textbf{2.Global Existence}\par
	To establish that $T^* = \infty$, we argue by contradiction. Suppose $T^* < \infty$. The estimates \eqref{66.2} remain valid as $t \to T^*$, continuity proposition \eqref{66.3} allowing us to define the limit $(\rho(T^*), s(T^*), \mathbf{u}(T^*)) \in H^3$. The density maintains a positive lower and upper bound. Moreover, based on the transport principle, we have $\frac{\partial_r s}{r^{N-1}\rho} \in L^\infty$. We may regard these limits as new initial data. Applying the local existence result (Lemma \ref{lemma2.1}) extends the solution to $[T^*, T^* + t_0]$ for some $t_0 > 0$. This contradicts the maximality of $T^*$. Thus, we complete the proof.
	\subsection{Proof of the Corollary \ref{cor1.1}}
	In fact, we have the following uniform bounds in this case
	$$
	\begin{aligned}
		\sup_{0\leq t<\infty}&\left(\|\rho\|_{L^\infty(\Omega)}+\|\nabla\rho^{\frac{1}{2}}\|_{L^2(\Omega)}+\|\nabla\rho^{\frac{1}{4}}\|_{L^{4}(\Omega)}\right)\\
		&+\int_0^\infty\|\nabla\rho^{\frac{\gamma}{2}}\|_{L^{2}(\Omega)}^2+\|\nabla\rho^{\frac{\gamma}{4}}\|_{L^{4}(\Omega)}^{4}+\|\rho^{\frac{1}{2}}\nabla \mathbf{u}\|_{L^2(\Omega)}^2\,dt\leq C,
	\end{aligned}
	$$ where $C$ is only dependent on initial data.
	These bounds ensure that the density converges uniformly to its equilibrium state, yielding a uniform positive lower bound for $\rho$ and the large-time behavior of $(\rho,\mathbf{u})$; see \cite{Huang-Meng-Zhang} for details.
	
	\section*{Acknowledgments}
	Xiangdi Huang is partially supported by Chinese Academy of Sciences Project for Young Scientists in Basic Research (Grant No. YSBR-031), National Natural Science Foundation of China (Grant Nos. 12494542, 11688101). The author is grateful to Dr. Weili Meng for providing helpful comments and suggestions that significantly improved this paper.
	
	\vspace{1cm}
	\noindent\textbf{Data availability statement.} Data sharing is not applicable to this article.
	
	\vspace{0.3cm}
	\noindent\textbf{Conflict of interest.} The authors declare that they have no conflict of interest.

	\bibliographystyle{siam}
	
\end{document}